\newtheorem{thm}{Theorem}
\newtheorem{assumption}[thm]{Assumption}
\newtheorem{lem}[thm]{Lemma}
\newtheorem{defi}[thm]{Definition}
\newtheorem{prop}[thm]{Proposition}
\newtheorem{rk}[thm]{Remark}
\newenvironment{preuve}{\vip \noindent {\it Proof}}{\hfill$\square$\vip}
\newcommand{\poubelle}[1]{}
\newcommand{\vip}{\vskip.2cm}
\newcommand{\e}{{\varepsilon}}
\newcommand{\dd}{{\mathbb{D}}}
\newcommand{\rr}{{\mathbb{R}}}
\newcommand{\nn}{{\mathbb{N}}}
\newcommand{\GG}{{\mathbb{G}}}
\newcommand{\zz}{{\mathbb{Z}}}
\newcommand{\E}{\mathbb{E}}
\newcommand{\Var}{\mathbb{V}{\rm ar}\,}
\newcommand{\cF}{{\mathcal F}}
\newcommand{\cL}{{\mathcal L}}
\newcommand{\cN}{{\mathcal N}}
\newcommand{\cS}{{\mathcal S}}
\newcommand{\cP}{{\mathcal P}}
\newcommand{\cE}{{\mathcal E}}
\newcommand{\tZ}{{\tilde Z}}
\newcommand{\tH}{{\tilde H}}
\newcommand{\tm}{{\tilde m}}
\newcommand{\tOmega}{{\tilde \Omega}}
\newcommand{\tcF}{{\tilde {\mathcal F}}}
\newcommand{\tPr}{{\tilde \Pr}}
\newcommand{\bM}{{\boldsymbol M}}
\newcommand{\bU}{{\boldsymbol U}}
\newcommand{\bdelta}{{\boldsymbol \delta}}
\newcommand{\bx}{{\boldsymbol x}}
\newcommand{\bm}{{\boldsymbol m}}
\newcommand{\btm}{\tilde{\boldsymbol m}}
\newcommand{\bmu}{{\boldsymbol \mu}}
\newcommand{\indiq}{{{\bf 1}}}
\newcommand{\intot}{{\int_0^t}}
\newcommand{\bW}{{\overline{W}}}
\newcommand{\tW}{{\widetilde W}}
\newcommand{\hW}{{\widehat W}}
\begin{document}

\title{High dimensional Hawkes processes}

\author{Sylvain Delattre, Nicolas Fournier and Marc Hoffmann}

\address{Sylvain Delattre, Laboratoire de Probabilit\'es et Mod\`eles Al\'eatoires, UMR 7599, 
Universit\'e Paris Diderot, Case courrier 7012, avenue de France, 75205 Paris Cedex 13, France.}

\email{sylvain.delattre@univ-paris-diderot.fr}

\address{Nicolas Fournier, Laboratoire de Probabilit\'es et Mod\`eles Al\'eatoires, UMR 7599,
Universit\'e Pierre-et-Marie Curie, Case 188, 4 place Jussieu, F-75252 Paris Cedex 5, France.}

\email{nicolas.fournier@upmc.fr}

\address{Marc Hoffmann, CEREMADE, CNRS-UMR 7534,
Universit\'e Paris Dauphine, Place du mar\'echal De Lattre de Tassigny
75775 Paris Cedex 16, France.}

\email{hoffmann@ceremade.dauphine.fr}

\begin{abstract}
We generalise the construction of multivariate Hawkes processes to a possibly infinite
network of counting processes on a directed graph $\mathbb  G$.
The process is constructed as the solution to a system of Poisson driven stochastic differential equations,
for which we prove pathwise existence and uniqueness under some reasonable conditions.

We next investigate how to approximate a standard $N$-dimensional Hawkes process by a simple 
inhomogeneous Poisson process in the mean-field framework where each pair of 
individuals interact in the same way, in the limit $N \rightarrow \infty$. In the so-called linear case 
for the interaction, we further investigate the large time behaviour of the process.
We study in particular the stability of the central limit theorem when exchanging 
the limits $N, T\rightarrow \infty$ and exhibit different possible behaviours.  

We finally consider the case $\mathbb G = \mathbb Z^d$ with nearest
neighbour interactions. In the linear case, we prove some (large time) laws of large numbers
and exhibit different behaviours, reminiscent of the infinite setting. 
Finally we study the propagation of a {\it single impulsion} started at a given point 
of $\zz^d$ at time $0$.
We compute the probability of extinction of such an impulsion and, in some particular cases, 
we can accurately describe how it 
propagates to the whole space.
\end{abstract}

\maketitle
\textbf{Mathematics Subject Classification (2010)}: 60F05, 60G55, 60G57.

\textbf{Keywords}: Point processes. Multivariate Hawkes processes. Stochastic differential equations. 
Limit theorems. Mean-field approximations. Interacting particle systems.

\section{Introduction}

\subsection{Motivation}

In several apparently different applied fields, a growing interest has been observed recently for a better 
understanding of stochastic interactions between multiple entities evolving through time. These include: 
seismology for modelling earthquake replicas (Helmstetter-Sornette \cite{hs}, Kagan \cite{k}, Ogata \cite{o2}, 
Bacry-Muzy \cite{bmu2}), neuroscience for modelling spike trains in brain activity (Gr\"un {\it et al.} 
\cite{gda},  Okatan {\it et al.} \cite{owb}, Pillow  {\it et al.} \cite{psp}, Reynaud  {\it et al.} 
\cite{rrgt, rrt}), genome analysis (Reynaud-Schbath \cite{rs}), financial contagion (Ait-Sahalia  
{\it et al.} \cite{acl}), high-frequency finance (order arrivals, see Bauwens-Hautsch \cite{bh}, Hewlett 
\cite{he}, market micro-structure see Bacry {\it et al.} \cite{bdhm1} and market impact see Bacry-Muzy 
\cite{bmu, bmu2}), financial price modelling across scales (Bacry  {\it et al.} \cite{bdhm2}, 
Jaisson-Rosenbaum \cite{jr}), social networks interactions (Blundell {\it et al.} \cite{bhb}, 
Simma-Jordan \cite{sj}, Zhou  {\it et al.} \cite{zzs}) and epidemiology like for instance viral 
diffusion on a network (Hang-Zha \cite{yz}), to name but a few. 
In all these contexts, observations are often represented as events (like spikes or features) associated 
to agents or nodes on a given network, and  that arrive randomly through time but that are not stochastically 
independent. 

\vip

In practice, we observe a multivariate counting process $(Z_t^1,\ldots, Z_t^N)_{t \geq 0}$, each component $Z_t^i$ 
recording the number of events of the $i$-th component of the system during $[0,t]$, or equivalently  the 
time stamps of the observed events.  Under relatively weak general assumptions, a multivariate counting 
process $(Z_t^1,\ldots, Z_t^N)_{t \geq 0}$ is characterised by its intensity process 
$(\lambda_t^1, \ldots, \lambda_t^N)_{t \geq 0}$, informally defined by
$$\mathrm{Pr}\big(Z^i\;\;\text{has a jump in}\;[t, t+dt]\;\big|\;\mathcal F_t\big) = 
\lambda_t^i dt,\;\;i=1,\ldots, N,$$
where $\mathcal F_t$ denotes the sigma-field generated by 
$(Z^i)_{1 \leq i \leq N}$ up to time $t$.
For modelling the interactions, a particularly attractive family of multivariate point processes 
is given by the class of (mutually exciting) Hawkes processes (Hawkes \cite{h}, Hawkes-Oakes \cite{ho}), with intensity process given by
$$\lambda_t^i=h_i\Big(\sum_{j= 1}^N\int_0^t \varphi_{ji}(t-s)dZ_s^j\Big),$$
where the {\it causal} functions $\varphi_{ji}:[0,\infty)\rightarrow \rr$ model how $Z^j$ acts on $Z^i$ 
by affecting its intensity process $\lambda^i$. The nonnegative functions $h_i$ account for some 
non-linearity, but if we set $h_i(x)=\mu_i+x$ with $\mu_i \geq 0$, we obtain {\it linear Hawkes processes} 
where $\mu_i$ can be interpreted as a baseline Poisson intensity. In the degenerate case $\varphi_{ji}=0$,  
we actually retrieve  standard Poisson processes.

\vip

Multivariate Hawkes processes have long been studied in probability theory (see for instance the comprehensive 
textbook of Daley-Vere-Jones \cite{dvj} and the references therein, Br\'emaud-Massouli\'e \cite{bm} or the 
recent results of Zhu \cite{z1, z2}). Their statistical inference is relatively well understood too, from a 
classical parametric angle (Ogata \cite{o-1}) together with recent significant advances in nonparametrics 
(Reynaud-Bouret-Schbath \cite{rs}, Hansen  {\it et al.} \cite{hrr}). However, the frontier is progressively 
moving to understanding the case of large $N$, when the number of components may become increasingly large 
or possibly infinite (see Galvez-L\"ocherbach \cite{gl} for some constructions in that direction). 
This context is potentially of major importance for future developments in the aforementioned applied 
fields. This is the topic of the present paper. 

\subsection{Setting}

We work on a filtered probability space $(\Omega,\cF,(\cF_t)_{t\geq 0},\Pr)$.
We say that $(X_t)_{t\geq 0}$ is a counting process if it is non-decreasing, c\`adl\`ag, integer-valued (and finite
for all times), with all its jumps of height $1$. For 
$(X_t)_{t\geq 0}$ a $(\cF_t)_{t\geq 0}$-adapted counting process, there is a unique 
non-decreasing predictable process $(\Lambda_t)_{t\geq 0}$, called {\it compensator} of $(X_t)_{t\geq 0}$,
such that $(X_t-\Lambda_t)_{t\geq 0}$ is a $(\cF_t)_{t\geq 0}$-local martingale, 
see Jacod-Shiryaev \cite[Chapter I]{js}.  

\vip

We consider a countable directed graph
$\mathbb G = \big(\mathcal S, \mathcal E\big)$
with vertices (or nodes) $i \in \mathcal S$ and (directed) edges $e \in \mathcal E$.  
We write $e=(j,i) \in \mathcal E$ for the oriented edge. We also need to specify 
the following parameters:
a kernel $\boldsymbol \varphi = (\varphi_{ji}, 
(j,i) \in \mathcal E)$ with  $\varphi_{ji}: [0,\infty) \mapsto \mathbb R$, and a nonlinear intensity component 
$\boldsymbol h = (h_i, i \in \mathcal S)$ with  $h_i:\rr \mapsto [0,\infty)$. The natural generalisation of 
finite-dimensional Hawkes processes is the following.

\begin{defi}\label{df1}
A Hawkes process with parameters 
$(\GG,\boldsymbol \varphi, \boldsymbol h)$ is a family of $(\cF_t)_{t\geq 0}$-adapted counting
processes $(Z^i_t)_{i \in \mathcal S, t \geq 0}$ such that

\vip

(i) almost surely, for all $i \ne j$, $(Z_t^i)_{t \geq 0}$ and $(Z_t^j)_{t \geq 0}$ never jump simultaneously,

\vip

(ii) for every $i \in \mathcal S$, the compensator $(\Lambda_t^i)_{t \geq 0}$ of $(Z_t^i)_{t \geq 0}$ has the form 
$\Lambda_t^i = \int_0^t \lambda^i_s ds$, where  the intensity process $(\lambda_t^i)_{t \geq 0}$ is given by
\begin{equation*}
\lambda^i_t = h_i\Big(\sum_{j \rightarrow i} \int_0^{t}\varphi_{ji}(t-s)d Z_s^j\Big),
\end{equation*}
with the notation $\sum_{j \rightarrow i}$  for summation over $\{j\; : \; (j,i) \in \mathcal E\}$.
\end{defi}

We say that a Hawkes process is {\it linear} 
when $h_i(x)=\mu_i+x$ for every $x\in\rr$, $i \in \mathcal S$, with $\mu_i\geq 0$ and
when $\varphi_{ji}\geq 0$. We will give some general existence, uniqueness and approximation results 
for {\it nonlinear} Hawkes processes, but
all the precise large-time estimates we will prove concern the linear case. 

\vip

A Hawkes process 
$(Z^i_t)_{i \in \mathcal S, t \geq 0}$ with parameters 
$(\GG,\boldsymbol \varphi, \boldsymbol h)$ behaves as follows.
For each $i\in \cS$, the rate of jump of $Z^i$ is, at time $t$, $\lambda_i(t)=h_i(\sum_{j \rightarrow i}
\sum_{k\geq 1} \varphi_{ji}(t-T^j_k)\indiq_{\{T^j_k\leq t\}})$, where $(T^j_k)_{k\geq 1}$ are the jump times
of $Z^j$. In other words, each time one of the $Z^j$'s has a jump, it {\it excites} its {\it neighbours}
in that it increases  their rate of jump (in the natural situation where $h$ is increasing and $\varphi$ is 
positive).
If $\varphi$ is positive and decreases to $0$, the case of almost all applications we have in mind, the 
influence of a jump decreases  and tends to $0$ as time evolves.

\subsection{Main results}

In the case where $\GG$ is a finite graph, under some appropriate assumptions on the parameters, the 
construction of $(Z_t^i)_{i \in \mathcal S, t \geq 0}$ is standard. However, for an infinite graph, the situation
is more delicate: we have to check, in some sense, that the interaction does not come from infinity.

\vip

The first part of this paper (Section \ref{wp}) consists of writing a Hawkes process
as the solution to a system of Poisson-driven S.D.E.s and of
finding a set of assumptions on $\GG$
and on the parameters $(\boldsymbol \varphi, \boldsymbol h)$ under which we can prove
the pathwise existence and uniqueness for this system of S.D.E.s. 
Representing counting processes as solutions to S.D.E.s is classical, 
see Lewis-Shedler \cite{ls}, Ogata, \cite{o}, Br\'emaud-Massouli\'e \cite{bm}, Chevallier \cite{c}.
However, the well-posedness of such S.D.E.s is not obvious when $\GG$ is an infinite graph.

\vip

In a second part (Section \ref{mfl}), we study the {\it mean-field} situation:
we assume that we have a finite (large) number $N$ of {\it particles}
behaving similarly, with no geometry. In other words, $\cS=\{1,\dots,N\}$ is endowed
with the set of all possible edges $\cE=\{(i,j)\;:\; i,j\in\cS\}$,
and there are two functions $h$ and $\varphi$ such that
$h_i=h$ and $\varphi_{ij}=N^{-1}\varphi$ for all $i,j \in \cS$. We show that, as $N\to\infty$,
Hawkes processes can be approximated by an i.i.d.\ family of inhomogeneous Poisson processes.
Concerning the large-time behaviour, we discuss, in the linear case, the possible law of large numbers 
and central limit theorems as $(t,N)\to(\infty,\infty)$ and we observe some different situations 
according to the position of $\int_0^\infty \varphi(t)dt$ with respect to $1$ (the so-called critical case).

\vip

Finally, we consider in Section \ref{nnm} the case where $\GG$ is $\zz^d$,
endowed with the set of edges $\cE=\{(i,j) \; : \; |i-j|=0$ or $1\}$, where $|\cdot|$ denotes the 
Euclidean distance. We study
the large time behaviour, in the linear case where $h_i(x)=\mu_i+x$ and when 
$\varphi_{ij}=(2d+1)^{-1}\varphi$ 
does not depend on $i,j$. We first assume that $\mu_i$ does not depend too much on $i$ (consider {\it e.g.}
the case where the $\mu_i$ are random, i.i.d. and bounded) and show that (i) if $\int_0^\infty \varphi(t)dt>1$, 
then there
is a law of large numbers and the interaction makes everything {\it flat}, in the sense that for all 
$i\ne j$, $Z^i_t\sim Z^j_t$ as $t\to \infty$;
(ii) if $\int_0^\infty \varphi(t)dt<1$, then there is again a law of large numbers, but the limiting value
depends on $i$. We also explain why these results are reminiscent of the infinite setting and of the interaction.
Finally, we study the case where $\mu_i=0$ for all $i$ but where there is an {\it impulsion} at time $0$
at $i=0$. We compute the probability of extinction of such an impulsion and, in some particular
cases, we study how it propagates to the whole space (when it does not blow out).

\subsection{Notation}

The Laplace transform of $\varphi:[0,\infty)\mapsto \rr$ is defined, when it exists, by
$$
\cL_\varphi(\alpha)=\int_0^\infty e^{-\alpha t}\varphi(t) dt.
$$
We also introduce the convolution of $h,g:[0,\infty)\mapsto \rr$ as (if it exists)
$(g\star h)_t=\int_0^t g_sh_{t-s}ds = \int_0^t g_{t-s}h_{s}ds$. As is well-known, when everything makes sense,
$\cL_{g\star h}(\alpha)=\cL_{g}(\alpha)\times\cL_{h}(\alpha)$.

\section{Well-posedness using a Poisson S.D.E.}\label{wp}

We will study Hawkes processes through a system of Poisson-driven stochastic differential
equations. This will allow us to speak of pathwise existence and uniqueness and to
prove some propagation of chaos using some simple {\it coupling} arguments.

\vip

Consider, on a filtered probability space $(\Omega,\cF,(\cF_t)_{t\geq 0}, \Pr)$, a
family $(\pi^i(ds\,dz), i \in \mathcal S)$  of i.i.d. $(\cF_t)_{t\geq 0}$-Poisson
measures with intensity measure $ds\,dz$ on 
$[0,\infty) \times [0,\infty)$.

\begin{defi}\label{df2}
A family $(Z_t^i)_{i\in \cS, t \geq 0}$ of {\it c\`adl\`ag} $(\mathcal F_t)_{t \geq 0}$-adapted 
processes is called a Hawkes process with parameters $(\GG,\boldsymbol \varphi, \boldsymbol h)$ if a.s.,
for all $i\in\cS$, all $t\geq 0$ 
\begin{equation} \label{equation basique}
Z_t^i= \int_0^t \int_0^\infty {\bf 1}_{\displaystyle \big\{z \leq h_i \big(\sum_{j \rightarrow i}\int_0^{s-}\varphi_{ji}(s-u)dZ_u^{j}\big)\big\}}
\pi^i(ds\,dz).
\end{equation}
\end{defi}

This formulation is consistent with Definition \ref{df1}.

\begin{prop}\label{con}

(a) A Hawkes process in the sense of Definition \ref{df2} is also a Hawkes process  
in the sense of Definition \ref{df1}.

\vip

(b) Consider a Hawkes process in the sense of Definition \ref{df1} (on some filtered probability space
$(\Omega,\cF,(\cF_t)_{t\geq 0},\Pr)$. Then we can build, on a
possibly enlarged probability space $(\tOmega,\tcF,(\tcF_t)_{t\geq 0},\tPr)$, a
family $(\pi^i(ds\,dz), i \in \mathcal S)$ of i.i.d. $(\tcF_t)_{t\geq 0}$-Poisson
measures with intensity measure $ds\,dz$ on 
$[0,\infty) \times [0,\infty)$ such that $(Z_t^i)_{i\in \cS, t \geq 0}$ is a Hawkes process in the sense of Definition
\ref{df2}.
\end{prop}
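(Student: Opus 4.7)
Suppose $(Z^i)_{i\in\cS}$ satisfies the Poisson-driven SDE \eqref{equation basique}, and set
$$\lambda^i_s := h_i\Big(\sum_{j\to i}\int_0^{s-}\varphi_{ji}(s-u)dZ^j_u\Big).$$
Property (i) is essentially free: since the $\pi^i$ are i.i.d. Poisson random measures on $[0,\infty)\times[0,\infty)$, on any bounded rectangle the time-projections of $\pi^i$ and $\pi^j$ ($i\ne j$) are independent Poisson processes, hence have disjoint atoms almost surely; as $Z^i$ can only jump at atoms of $\pi^i$, distinct components never jump simultaneously. For property (ii), the integrand in \eqref{equation basique} is $(\cF_t)$-predictable because $s\mapsto\lambda^i_s$ is left-continuous and adapted. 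A standard compensation of Poisson stochastic integrals (localizing if $\lambda^i$ is not locally bounded) then yields that $Z^i_t-\int_0^t\lambda^i_s ds$ is an $(\cF_t)$-local martingale, which is exactly Definition \ref{df1}(ii).

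\textbf{Part (b).} Going backwards is a standard thinning construction. Let $(T^i_k)_{k\geq 1}$ denote the jump times of $Z^i$ and $\lambda^i_s$ the (left-continuous, predictable) intensity from Definition \ref{df1}. Enlarge $(\Omega,\cF,(\cF_t),\Pr)$ to $(\tOmega,\tcF,(\tcF_t),\tPr)$ by appending, independently of $\cF_\infty$:
$$\text{(i) i.i.d. } (U^i_k)_{i\in\cS,k\geq 1}\sim\mathrm{Unif}[0,1], \qquad \text{(ii) i.i.d. Poisson measures } (\bar\pi^i)_{i\in\cS}\text{ with intensity }ds\,dz,$$
and augment the filtration with $\bar\pi^i|_{[0,t]\times[0,\infty)}$ and with the marks $U^i_k\indiq_{\{T^i_k\leq t\}}$. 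Now set
$$\pi^i(ds\,dz):=\sum_{k\geq 1}\delta_{(T^i_k,\, U^i_k\lambda^i_{T^i_k})}(ds\,dz)+\indiq_{\{z>\lambda^i_s\}}\bar\pi^i(ds\,dz).$$
Formula \eqref{equation basique} then holds tautologically: $Z^i$ jumps exactly at the first coordinates of the atoms of the first summand, which by construction lie below the graph of $\lambda^i$.

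\textbf{Main obstacle.} The real content is to check that each $\pi^i$ is, in the enlarged filtration $(\tcF_t)$, a Poisson random measure of intensity $ds\,dz$, and that the family $(\pi^i)_{i\in\cS}$ is i.i.d. One splits $\pi^i$ along the predictable graph $\Gamma^i:=\{(s,z):0\leq z\leq\lambda^i_s\}$. On $\Gamma^{i,c}$, $\pi^i$ coincides with the restriction of the independent Poisson measure $\bar\pi^i$ to a predictable set, hence is Poisson with intensity $ds\,dz$. On $\Gamma^i$, we use the classical marked-point-process representation (see e.g. Br\'emaud--Massouli\'e \cite{bm}): a counting process with predictable intensity $\lambda^i$, augmented with i.i.d. Unif$[0,1]$ marks scaled by $\lambda^i_{T^i_k}$, produces a Poisson random measure on $\Gamma^i$ with Lebesgue intensity; this is verified by an exponential Laplace functional computation together with the characterisation of Poisson measures via their Laplace transform. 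The two pieces combine into a Poisson measure of intensity $ds\,dz$ on the whole plane. Finally, independence across $i\in\cS$ follows from Definition \ref{df1}(i) (the $\Gamma^i$-atoms have pairwise disjoint time-supports) combined with the mutual independence of the auxiliary variables $U^i_k$ and $\bar\pi^i$.
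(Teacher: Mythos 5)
Your proposal is correct in substance, but it is worth noting how it sits relative to the paper: for part (a) you give exactly the paper's argument (compensation of the Poisson integral plus independence of the $\pi^i$ to rule out common jumps), while for part (b) the paper gives no proof at all — it defers to Br\'emaud--Massouli\'e \cite{bm}, Jacod \cite{j} and Chevallier \cite[Section IV]{c} — and your explicit ``inverse thinning'' construction (marking each jump $T^i_k$ with $U^i_k\lambda^i_{T^i_k}$ under the graph of the predictable intensity, and filling the region $\{z>\lambda^i_s\}$ with an independent Poisson measure) is precisely the construction carried out in those references, so you are reconstructing the outsourced proof rather than taking a genuinely different route. Two points of your sketch would need to be made explicit in a complete write-up. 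First, one must check that each $Z^i$ keeps the compensator $\int_0^\cdot\lambda^i_s ds$ with respect to the enlarged filtration $(\tcF_t)_{t\geq 0}$; this is where the independence of the appended randomness (the $\bar\pi^i$ and the marks $U^i_k$) from $\cF_\infty$ enters, and it is the whole reason the statement allows an enlargement of the space. Second, your closing sentence on independence across $i$ is too quick as stated: disjoint jump times together with independence of the auxiliary variables is not, by itself, an argument. The clean way — consistent with your ``main obstacle'' paragraph — is to view the whole family as a single integer-valued random measure on $\cS\times[0,\infty)\times[0,\infty)$ (Definition \ref{df1}-(i) and the a.s.\ distinctness of the atoms of the $\bar\pi^i$ ensure at most one atom per time), compute that its compensator is the deterministic, atomless measure $(\hbox{counting measure on }\cS)\otimes ds\otimes dz$, and invoke the characterisation of Poisson random measures by deterministic compensators (Jacod--Shiryaev \cite{js}, Chapter II, or a joint Laplace-functional computation); this yields at once that each $\pi^i$ is an $(\tcF_t)_{t\geq 0}$-Poisson measure with intensity $ds\,dz$ and that the family is independent. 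A minor quibble in part (a): $s\mapsto\int_0^{s-}\varphi_{ji}(s-u)dZ^j_u$ need not be left-continuous when $\varphi_{ji}$ is merely measurable, so predictability of the integrand in \eqref{equation basique} should be justified by a monotone class argument over processes of the form $f(s)g(T^j_k)\indiq_{\{T^j_k<s\}}$ rather than by left-continuity; with these details filled in, your proof is complete and matches the cited literature.
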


Point (a) is very easy: for  a Hawkes process $(Z_t^i)_{i\in \cS, t \geq 0}$ in the sense of Definition \ref{df2},
it is clear that for every $i\in\cS$, the compensator of $Z^i$ is $\intot \int_0^\infty 
{\bf 1}_{\{z \leq h_i (\sum_{j \rightarrow i}\int_0^{s-}\varphi_{ji}(s-u)dZ_u^{j})\}} dzds$, which is equal to 
$\intot h_i (\sum_{j \rightarrow i}\int_0^{s-}\varphi_{ji}(s-u)dZ_u^{j})ds$. Furthermore, the independence of the
Poisson random measures $(\pi^i(ds\,dz), i \in \mathcal S)$ guarantees that for all $i\ne j$, $(Z^i_t)_{t\geq 0}$
and $(Z^j_t)_{t\geq 0}$ a.s. never jump simultaneously.

\vip

Point (b) is more delicate but standard and a very similar result was given in Br\'emaud-Massouli\'e \cite{bm}.
Their proof is based on results found in the book \cite{j} of Jacod, 
of which one of the main goals is exactly this topic: prove the equivalence
between martingale problems and S.D.E.s. We also refer to Chevallier \cite[Section IV]{c} where a very complete 
proof
is given as well as a historical survey. 
Let us mention that the idea to integrate 
an indicator function with respect
to a Poisson measure in order to produce an inhomogeneous Poisson process with given intensity 
was first introduced by Lewis-Shedler \cite{ls}, and later extended by Ogata \cite{o} in the case of a 
stochastic intensity.

\vip

The following set of assumptions will guarantee the well-posedness of \eqref{equation basique}.

\begin{assumption} \label{basic h, varphi} There are some nonnegative constants $(c_i)_{i\in\cS}$,
some positive weights $(p_i)_{i\in\cS}$ and a locally integrable function $\phi:[0,\infty)
\mapsto [0,\infty)$ such that 

\vip

(a) for every $i\in\cS$, every $x,y\in\rr$, $|h_i(x)-h_i(y)|\leq c_i |x-y|$,

\vip

(b) $\sum_{i \in \mathcal S} h_i(0) p_i<\infty$,

\vip

(c) for every $s\in [0,\infty)$, every $j\in\cS$,
$\sum_{i, (j,i)\in \mathcal E}c_i p_i \big|\varphi_{ji}(s)\big|\leq p_j \phi(s)$.
\end{assumption}

Let us give a few examples of parameters 
$(\GG,\boldsymbol \varphi, \boldsymbol h)$ satisfying Assumption \ref{basic h, varphi}.

\begin{rk}\label{example}
(i) If $\cS$ is finite, then Assumption \ref{basic h, varphi} holds true, with the choice $p_i=1$, 
as soon as $h_i$ is Lipschitz continuous for all $i\in\cS$ and
$\varphi_{ji}$ is locally integrable for all $(j,i)\in\cE$. 

\vip

(ii) If $\cS=\mathbb{Z}^d$ is endowed with
$\cE=\{(i,j)\;:\; |i-j|=0$ or $1\}$, then 
Assumption \ref{basic h, varphi} holds, with the choice $p_i=2^{-|i|}$, if 
$\sum_{i\in\zz^d} 2^{-|i|}|h_i(0)|<\infty$ and if there are $c>0$ and $\varphi \in L^1_{loc}([0,\infty))$
such that $|h_i(x)-h_i(y)|\leq c|x-y|$ and $|\varphi_{jk}(t)|\leq \varphi(t)$
for all $i\in\cS$, $x,y\in\rr$, $(j,k)\in\cE$ and $t\geq 0$.

\vip

(iii) Consider next $\cS=\mathbb{Z}^d$ endowed with the set of all possible edges 
$\cE=\{(i,j)\;:\; i,j \in\zz^d\}$ and assume that there is $c>0$ such that 
$|h_i(0)|\leq c$ and $|h_i(x)-h_i(y)|\leq c|x-y|$ for all $i\in\cS$, $x,y\in\rr$.
Assume that there are $\varphi \in L^1_{loc}([0,\infty))$ and a nonincreasing $a:[0,\infty)\mapsto [0,\infty)$
such that $|\varphi_{ji}(t)|\leq a(|i-j|) \varphi(t)$ for all $(i,j)\in\cE$ and $t\geq 0$. Then
if $\sum_{i\in\zz^d} a(|i|)<\infty$, Assumption \ref{basic h, varphi} holds true.

\vip

(iv) Consider the (strongly oriented) graph $\zz_+$ endowed with the set of edges
$\cE=\{(i,i+1)\;:\; i \in\zz_+\}$. 
Then Assumption \ref{basic h, varphi} holds true as soon as there is 
$\varphi \in L^1_{loc}([0,\infty))$ such that for every $i \in \zz_+$, there are
$c_i>0$ and $a_i>0$ such that $|h_i(x)-h_i(y)|\leq c_i|x-y|$ and $|\varphi_{i(i+1)}|\leq a_i \varphi$.
\end{rk}

Points (ii) and (iii) of course extend to other graphs.
In (iv), there is no growth condition on $|h_i(0)|$, $c_i$ and $a_i$. This comes from the fact that
the interaction is directed: $Z^0$ is actually a Poisson process with rate
$h_0(0)$, the intensity of $Z^1$ is entirely determined by that of $Z^0$, and so on. Hence this example
is not very interesting. But we can mix e.g. points (ii) and (iv): 
informally, coefficients corresponding to edges directed to the
origin have to be well-controlled, while coefficients corresponding to edges directed to infinity
require less assumptions.

\begin{proof} Point (i) is obvious. To check (ii), simply note that for all $j\in\cS$,
$\sum_{i, (j,i)\in \mathcal E} c 2^{-|i|} |\varphi_{ji}|\leq c \varphi 
2^{-|j|} \sum_{i, (j,i)\in \mathcal E} 2^{|j|-|i|} \leq  c 2(2d+1)\varphi 2^{-|j|}$ and define $\phi=c 2(2d+1)\varphi$.
Point (iv) holds with $(p_i)_{i \in \zz_+}$ defined by $p_0=1$ and, by induction, $p_{i+1}= \min\{2^{-i}/(1+h_{i+1}(0)), 
p_i/(1+a_ic_{i+1})\}$. This of course implies that $\sum_{i\in\zz_+}p_i|h_i(0)|<\infty$ and that for all 
$j\geq 1$, $\sum_{i, (j,i)\in \mathcal E} c_i p_i |\varphi_{ji}|=c_{j+1}p_{j+1}|\varphi_{j(j+1)}|
\leq c_{j+1}p_{j+1} a_j \varphi \leq p_j \varphi$ as desired. 

\vip

To prove (iii), we work with the sup norm 
$|i|=|(i_1,\dots,i_d)|=\max\{|i_1|,\dots,|i_d|\}$. The delicate part consists in showing that there is
$b: \nn \mapsto [0,\infty)$ and a constant $C>0$ such that $\sum_{i\in\zz^d}b(|i|)<\infty$
and, for all $j\in\zz^d$,  $\sum_{i\in\zz^d}b(|i|) a(|i-j|) \leq C b(|j|)$. Then the result will easily follow,
with the choices $p_i=b(|i|)$ and $\phi=Cc \varphi$. We define $b$ recursively, by $b(0)=a(0)$
and $b(k+1)=\max\{a(k+1),[(k+1)/(k+2)]^{2d}b(k)\}$. Using that $a$ is nonincreasing, we easily check that
$b$ is nonincreasing. We next check that $\sum_{i\in\zz^d}b(|i|)<\infty$, i.e. that 
$\sum_{k \geq 0}k^{d-1}b(k)<\infty$, knowing by assumption that $\sum_{k \geq 0}k^{d-1}a(k)<\infty$. 
We have, for $k\geq 0$,
\begin{align*}
b(k+1)-a(k+1)=&\big([(k+1)/(k+2)]^{2d}b(k)-a(k+1) \big)_+ \\
\leq& [(k+1)/(k+2)]^{2d} (b(k)-a(k))_+ +\big([(k+1)/(k+2)]^{2d}a(k)-a(k+1) \big)_+\\
\leq & [(k+1)/(k+2)]^{2d}(b(k)-a(k)) + (a(k)-a(k+1)).
\end{align*}
Recalling that $b(0)=a(0)$, one gets 
$b(k)-a(k) \leq \sum_{\ell=1}^k (a(\ell-1)-a(\ell))[(\ell+1)/(k+1)]^{2d}$ by iteration.
Hence 
\begin{align*}
\sum_{k\geq 1} k^{d-1}(b(k)-a(k)) \leq & 
\sum_{k\geq 1} k^{d-1}\sum_{\ell=1}^k (a(\ell-1)-a(\ell))[(\ell+1)/(k+1)]^{2d}\\
=&\sum_{\ell \geq 1}  (a(\ell-1)-a(\ell))(\ell+1)^{2d} \sum_{k \geq  \ell} k^{d-1}(k+1)^{-2d} \\
\leq&C \sum_{\ell \geq 1}  (a(\ell-1)-a(\ell))\ell^{d}.
\end{align*}
This last quantity is nothing but $C \sum_{\ell \geq 1}  a(\ell)[(\ell+1)^d-\ell^d]\leq C  
\sum_{\ell \geq 1} a(\ell) \ell^{d-1}<\infty$. We have thus checked that 
$\sum_{k \geq 0}k^{d-1}b(k)=\sum_{k \geq 1} a(k)k^{d-1}+ \sum_{k\geq 1} k^{d-1}(b(k)-a(k))<\infty$.

\vip

We finally prove that for all $j\in\zz^d$,  $\sum_{i\in\zz^d}b(|i|) a(|i-j|) \leq C b(|j|)$. 
First, we claim that there is $C$ such that $b(k)\leq C b(2k)$ for all $k\geq 0$.
This is easily checked, iterating the inequality $b(k)\leq [(k+2)/(k+1)]^{2d}b(k+1)$. Next we write,
using that $a$ and $b$ are nonincreasing,
\begin{align*}
\sum_{i\in\zz^d}b(|i|) a(|i-j|)\leq & \sum_{|i|<|j|/2}b(|i|) a(|i-j|)+\sum_{|i|\geq |j|/2}b(|i|) a(|i-j|) \\
\leq & a(|j|/2) \sum_{i \in \zz^d}b(|i|) + b(|j|/2)\sum_{i \in \zz^d} a(|i-j|) \\
\leq & Ca(|j|/2)+ C b(|j|/2).
\end{align*}
By definition of $b$, we have $a(|j|/2) \leq b(|j|/2)$. And we have just seen that $b(|j|/2) \leq Cb(|j|)$.
We finally have checked that  $\sum_{i\in\zz^d}b(|i|) a(|i-j|) \leq C b(|j|)$ as desired.
\end{proof}

Our well-posedness result is the following.

\begin{thm} \label{existence uniqueness Y}
Under Assumption \ref{basic h, varphi}, there exists a pathwise unique 
Hawkes process  $(Z_t^i)_{i\in\cS,t \geq 0}$ such that
$\sum_{i \in \mathcal S} p_i \E[Z_t^i]<\infty$ for all $t\geq 0$.
\end{thm}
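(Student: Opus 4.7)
The plan is to build the unique solution by Picard iteration in the $p$-weighted $L^1$-norm, and to deduce uniqueness from a coupling argument driven by the same Poisson measures. Set $Z^{i,0}_t\equiv 0$ and, for $n\geq 0$,
\[
Z_t^{i,n+1}=\int_0^t\!\!\int_0^\infty \indiq_{\{z\leq h_i(X^{i,n}_s)\}}\pi^i(ds\,dz),\qquad X^{i,n}_s=\sum_{j\to i}\int_0^{s-}\varphi_{ji}(s-u)dZ^{j,n}_u.
\]
A preliminary induction (using exactly the same computation as below) will give $M^n_t:=\sum_i p_i\E[Z^{i,n}_t]<\infty$ on every $[0,T]$, so that the random sums defining $X^{i,n}$ converge absolutely a.s.\ and each successive iterate is well-defined.

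For convergence, introduce $V^{i,n}_t$, the total variation on $[0,t]$ of $Z^{i,n}-Z^{i,n-1}$, and set $\delta^{i,n}_t=\E[V^{i,n}_t]$, $D^n_t=\sum_i p_i\delta^{i,n}_t$. Since both iterates are driven by the same $\pi^i$,
\[
V_t^{i,n+1}=\int_0^t\!\!\int_0^\infty\bigl|\indiq_{\{z\leq h_i(X^{i,n}_s)\}}-\indiq_{\{z\leq h_i(X^{i,n-1}_s)\}}\bigr|\pi^i(ds\,dz).
\]
Taking expectations, invoking the Lipschitz bound of Assumption \ref{basic h, varphi}(a), the estimate $|X^{i,n}_s-X^{i,n-1}_s|\leq \sum_{j\to i}\int_0^s|\varphi_{ji}(s-u)|\,dV^{j,n}_u$ (because $V^{j,n}$ dominates the total variation of $Z^{j,n}-Z^{j,n-1}$), Fubini, and Assumption \ref{basic h, varphi}(c), then swapping $ds$ and $du$ and applying Stieltjes integration by parts with $D^n_0=\Phi(0)=0$ where $\Phi(x):=\int_0^x\phi$, one obtains the clean recursion
\[
D^{n+1}_t\leq\int_0^t\phi(t-u)D^n_u\,du,\qquad D^1_t\leq tH,\quad H:=\sum_i p_i h_i(0)<\infty.
\]

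The crux is closing this Volterra inequality on $[0,T]$; since $\phi$ is only locally integrable, $\int_0^T\phi$ may well exceed $1$, so no direct contraction is available. The remedy is to introduce an exponential weight: set $\hat D^n_t=e^{-\alpha t}D^n_t$ and $\hat\phi(t)=e^{-\alpha t}\phi(t)$ and choose $\alpha$ large enough that $r:=\int_0^T\hat\phi(s)\,ds<1$ (possible by dominated convergence, since $\phi\indiq_{[0,T]}\in L^1$). The recursion becomes $\hat D^{n+1}\leq \hat\phi*\hat D^n$, hence $\|\hat D^{n+1}\|_{L^\infty[0,T]}\leq r\|\hat D^n\|_{L^\infty[0,T]}$, giving geometric decay and $\sum_n D^n_t<\infty$ uniformly on $[0,T]$. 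Hence $\sum_n\sum_i p_i V_T^{i,n}<\infty$ a.s.; since each $V_T^{i,n}$ is a nonnegative integer, for each $i$ with $p_i>0$ this forces $V_T^{i,n}(\omega)=0$ for $n$ large enough (depending on $(\omega,i,T)$), so $Z^{i,n}$ stabilises to a pathwise limit $Z^i$ that is automatically a counting process. Passing to the limit in the SDE is justified by $\E\bigl|\int(\indiq_{\{z\leq h_i(X^{i,n}_s)\}}-\indiq_{\{z\leq h_i(X^i_s)\}})\pi^i\bigr|\leq c_i\int_0^t\E|X^{i,n}_s-X^i_s|ds\to 0$, and the summability of $D^n$ yields $\sum_i p_i\E[Z^i_t]<\infty$.

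Uniqueness is obtained by running the same machinery on the difference of two solutions $(Z^i),(\tilde Z^i)$ driven by a common $\pi^i$ and both satisfying the integrability hypothesis. Letting $V^i_t$ denote the total variation of $Z^i-\tilde Z^i$, the identical computation yields $D_t:=\sum_i p_i\E[V^i_t]\leq\int_0^t\phi(t-u)D_u\,du$, and $D_t$ is both finite (bounded by $\sum_i p_i(\E[Z^i_t]+\E[\tilde Z^i_t])$) and nondecreasing. The exponential-weight argument then forces $D\equiv 0$ on every $[0,T]$, so $Z^i=\tilde Z^i$ a.s.\ for every $i$. The main obstacle throughout is precisely this Volterra Gronwall step, which would be trivial if $\int_0^T\phi$ were small but genuinely requires the exponential weight otherwise; everything else is routine bookkeeping with Fubini and integration by parts for nondecreasing integrators.
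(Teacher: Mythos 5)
Your proof is correct and follows essentially the same route as the paper: Picard iteration driven by the same Poisson measures, control of the $p_i$-weighted expected total variations via the Lipschitz bound, Lemma \ref{tlt}-type Fubini manipulations and Assumption \ref{basic h, varphi}-(c) to get the Volterra recursion, then summability, pathwise stabilisation, and the same computation for uniqueness. The only (minor) divergence is how the Volterra inequality with a merely locally integrable kernel is closed: you use an exponential weight $e^{-\alpha t}$ to make $\int_0^T e^{-\alpha s}\phi(s)\,ds<1$, whereas the paper's Lemma \ref{grrrr} splits $\phi$ into a bounded part plus an $L^1$-small part before applying classical Gr\"onwall -- both devices are standard and interchangeable here.
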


Observe that this result is not completely obvious in the case of an infinite graph.
In some sense, we have to check that the interaction does not come from infinity.
Let us insist on the fact that, even in simple situations, a {\it graphical construction}
is not possible: consider {\it e.g.} the case of $\zz$ endowed with the set of edges $\cE=
\{(i,j) \; : \; |i-j|=0$ or $1\}$,
assume that $h_i(x)=1+x$ for all $i\in\cS$ and that $\varphi_{ij}=1$ for all $(i,j)\in\cE$.
Then one easily gets convinced that
we cannot determine the values of $(Z^0_t)_{t\in [0,T]}$ by observing the Poisson
measures $\pi^i$ in a (random) finite box.

\vip

As a second comment, let us mention that we believe it is not possible, or at least quite difficult, to 
obtain the full uniqueness, {\it i.e.} uniqueness outside the class of processes satisfying 
$\sum_{i \in \mathcal S} p_i \E[Z_t^i]<\infty$ (or something similar). Indeed, consider again the case of $\zz$ 
endowed with $\cE=\{(i,j) \; : \; |i-j|=0$ or $1\}$,
assume that $h_i(x)=1+x$ for all $i\in\cS$ and that $\varphi_{ji}=1$ for all $(i,j)\in\cE$.
One easily checks that for $(Z_t^i)_{i\in\cS,t \geq 0}$ a Hawkes process, for $m^i_t=\E[Z^i_t]$, it holds
that $m^i_t= t + \intot (m^{i-1}_s+m^i_s+m^{i+1}_s)ds$ for every $i$. This infinite system of equations
is of course closely related to the heat equation $\partial_t u(t,x)=1 + \partial_{xx}u(t,x)$ 
on $[0,\infty)\times \rr$ and with initial condition $u(0,x)=0$. As is well-known, uniqueness for
this equation fails to hold true without imposing some growth conditions as $|x|\to \infty$.
See e.g. Tychonov's counterexample of uniqueness, which can be found in John \cite[Chapter 7]{jo}.

\begin{proof} We first prove uniqueness. Let thus $(Z_t^i)_{i\in\cS,t \geq 0}$ and $(\tZ_t^i)_{i\in\cS,t \geq 0}$
be two solutions to \eqref{equation basique} satisfying the required condition. Set
$$
\Delta_t^i=\int_0^t \big|d\big(Z_s^i-\tZ_s^i\big)\big|\;\;\text{for}\;\;i\in 
\mathcal S,t\geq 0.
$$
In other words, $\Delta^i_t$ is the total variation norm of the signed measure 
$d\big(Z_s^i-\tZ_s^i\big)$ on $[0,t]$. We also put $\delta_t^i = \E[\Delta_t^i]$ and first prove that
\begin{equation} \label{Tassoeur}
\delta_t^i \leq  c_i \int_0^t \sum_{j \rightarrow i}|\varphi_{ji}(t-s)| \delta_s^jds. 
\end{equation}
We have
\begin{align*}
\Delta_t^i  = & \int_0^t \int_0^\infty \Big| 
{\bf 1}_{\big\{z \leq h_i \big(\sum_{j \rightarrow i}\int_0^{s-}\varphi_{ji}(s-u)dZ_u^{j}\big)\big\}} 
-{\bf 1}_{\big\{z \leq h_i \big(\sum_{j \rightarrow i}\int_0^{s-}\varphi_{ji}(s-u)d\tZ_u^{j}\big)\big\}}
\Big|  \pi^i(ds\,dz). 
\end{align*}
Taking expectations, we deduce that
\begin{align}
\delta_t^i  = & \int_0^t \mathbb E\Big[\Big|h_i \big(\sum_{j \rightarrow i}\int_0^{s-}
\varphi_{ji}(s-u)dZ_u^{j}\big)-h_i \big(\sum_{j \rightarrow i}\int_0^{s-}\varphi_{ji}(s-u)
d\tZ_u^{j}\big)\Big|\Big]ds \nonumber\\
\leq & c_i \sum_{j \rightarrow i}\mathbb E\Big[\int_0^t \int_0^{s-}\big|\varphi_{ji}(s-u)\big|
d\Delta_u^{j}ds\Big]\label{Tassmere}
\end{align}
by Assumption \ref{basic h, varphi}-(a). Using Lemma \ref{tlt}, we see that
\begin{align*}
\int_0^t ds \int_0^{s-}|\varphi_{ji}(s-u)|d\Delta_u^j =
\int_0^{t}\big|\varphi_{ji}(t-u)\big|\Delta_u^j du
\end{align*}
which, plugged into \eqref{Tassmere}, yields \eqref{Tassoeur}. 

\vip

Set $\delta_t = \sum_{i \in \mathcal S}p_i\delta_t^i$, where 
the weights $p_i$ were introduced in Assumption \ref{basic h, varphi}.  By assumption, $\delta_t$ 
is well-defined and finite. We infer by \eqref{Tassoeur} that
\begin{align*}
\delta_t  \leq \int_0^t \sum_{i \in \mathcal S} p_i c_i \sum_{j \rightarrow i}
\big|\varphi_{ji}(t-s)\big|\delta_s^j  ds.
\end{align*}
By Assumption \ref{basic h, varphi}-(c),
\begin{align*}
\delta_t \leq & \int_0^t \sum_{j \in \mathcal S} \delta_s^j 
\sum_{i, (j,i)\in \mathcal E}c_i p_i\big|\varphi_{ji}(t-s)\big|ds 
\leq \int_0^t \sum_{j \in \mathcal S}p_j\delta_s^j\phi(t-s)ds= \int_0^t \phi(t-s)\delta_sds.
\end{align*}
Lemma \ref{grrrr}-(i) thus implies that $\delta_t=0$ identically, from which uniqueness follows.

\vip

We now quickly prove existence by a Picard iteration. 
Let $Z^{i,0}_t = 0$ and, for $n\geq 0$,
\begin{equation} \label{pic}
Z_t^{i,n+1}= \int_0^t \int_0^\infty 
{\bf 1}_{\{z \leq h_i (\sum_{j \rightarrow i}\int_0^{s-}\varphi_{ji}(s-u)dZ_u^{j,n})\}}
\pi^i(ds\,dz).
\end{equation}
We define $\delta^{i,n}_t=\E[\intot |dZ_s^{i,n+1}-dZ_s^{i,n}|]$
and  $\delta^{n}_t= \sum_{i\in \cS} p_i \delta^{i,n}_t $.
As in the proof of uniqueness, we obtain, for $n\geq 0$,
\begin{align}\label{tass1}
\delta_t^{n+1} \leq &  \int_0^t \phi(t-s)\delta_s^{n}ds.
\end{align}
Next, we put $m^{i,n}_t=\E[Z_t^{i,n}]$.
By Assumption \ref{basic h, varphi}-(a), $h_i(x)\leq h_i(0)+c_i|x|$, whence
\begin{align*}
m^{i,n+1}_t \leq & \E\Big[\intot \Big(h_i(0)+c_i\sum_{j \rightarrow i}\int_0^{s-}
|\varphi_{ji}(s-u)|dZ^{j,n}_u \Big) ds   \Big]
\leq  \intot \Big(h_i(0) + c_i\sum_{j \rightarrow i} |\varphi_{ji}(t-s)|m^{j,n}_s \Big) ds,
\end{align*}
where we used that, by Lemma \ref{tlt},
$\intot \int_0^{s-} |\varphi_{ji}(s-u)|dZ^{j,n}_u ds =\intot |\varphi_{ji}(t-u)| Z^{j,n}_u du $.
Setting  $u_t^n=\sum_{i\in\cS} p_i m^{i,n}_t$ and using Assumption \ref{basic h, varphi}-(b)-(c),
\begin{align}\label{tass2}
u^{n+1}_t \leq& 
t \sum_{i\in \cS} h_i(0) p_i 
+ \intot \sum_{i\in \cS} p_i c_i \sum_{j \rightarrow i} |\varphi_{ji}(s-u)|m^{j,n}_s  ds
\leq  C t 
+ \intot \phi(t-s) u^n_s ds.
\end{align}
Since $u^0_t=0$ and $\phi$ is locally integrable, 
we easily check by induction that $u^n$ is locally bounded for all $n\geq 0$.
Consequently, $\delta^n$ is also locally bounded for all $n\geq 0$. Lemma \ref{grrrr}-(ii)
implies that for all $T\geq 0$, $\sum_{n\geq 1}\delta_T^n<\infty$. 
This classically implies that the Picard sequence is Cauchy and thus converges:
there exists a family $(Z_t^i)_{i\in\cS,t \geq 0}$ of  {\it c\`adl\`ag} 
nonnegative adapted processes such that for all $T\geq 0$,
$\lim_n \sum_{i\in\cS} p_i \E[\int_0^T |dZ_s^{i}-dZ_s^{i,n}| ]=0$.
It is then not hard to pass to the limit in \eqref{pic} to deduce that $(Z_t^i)_{i\in\cS,t \geq 0}$
solves \eqref{equation basique}.
Finally, Lemma \ref{grrrr}-(iii) implies that $\sup_n u^n_t <\infty$ for all $t\geq 0$, from which 
$\sum_{i \in \mathcal S} p_i \mathbb E[Z_t^i]<\infty$ as desired.
\end{proof}

\section{Mean-field limit}\label{mfl}

In this section, we work in the following setting.

\begin{assumption} \label{mean field hyp}
Let $h:\rr\mapsto [0,\infty)$ be such that
$|h|_{lip}=\sup_{x\ne y}|x-y|^{-1}|h(x)-h(y)|<\infty$
and let $\varphi=[0,\infty)\mapsto \rr$ be a locally square integrable function.

For each $N\geq 1$, we consider the complete graph
$\mathbb G_N$ with vertices $\mathcal S_N = \{1,\ldots, N\}$ and edges 
$\mathcal E_N = \{(i,j) \; : \; i,j\in \mathcal S_N\}$, {\it i.e.} all pairs of points in $\mathcal S_N$ 
are connected. We put $h_i^N=h$ for all $i\in\cS_N$ and $\varphi_{ji}^N=N^{-1}\varphi$
for $(i,j)\in\cE_N$.
\end{assumption}

Under Assumption \ref{mean field hyp}, the triplet $(\GG_N,\boldsymbol \varphi^N,\boldsymbol h^N)$
satisfies Assumption \ref{basic h, varphi} (the graph $\mathbb G_ N$ is finite) for each $N\geq 1$.
Therefore, a Hawkes process $(Z_t^{N,1},\ldots, Z_t^{N,N})_{t \geq 0}$ with parameters 
$(\GG_N,\boldsymbol \varphi^N,\boldsymbol h^N)$
is uniquely defined by Theorem \ref{existence uniqueness Y}.

\vip

Introduce the {\it limit} equation
\begin{equation} \label{sol mean-field}
\overline Z_t = \int_0^t\int_0^\infty {\bf 1}_{\displaystyle \big\{z \leq h\big(\int_0^s 
\varphi(s-u)d\E[\overline Z_u]\big)\big\}}\pi(ds\,dz),\;\;\text{for every}\;\;t\geq 0,
\end{equation}
where $\pi(ds \, dz)$ is a Poisson measure on $[0,\infty)\times [0,\infty)$ with intensity measure $ds dz$.
Also, $d\E[\overline Z_u]$ is the measure on $[0,\infty)$ associated to the (necessarily) 
non-decreasing function  $u\mapsto \E[\overline Z_u]$.
Note that a solution 
$\overline Z = ({\overline Z}_t)_{t \geq 0}$, if it exists, is an inhomogeneous Poisson process 
on $[0,\infty)$ with intensity $\lambda_t=h(\int_0^t\varphi(t-u)d\E[\overline Z_u])$.

\subsection{Propagation of chaos}

The main result of this section reads as follows.

\begin{thm} \label{mean-field approx}
Work under Assumption \ref{mean field hyp}.

\vip

(i) There is a pathwise unique solution $({\overline Z}_t)_{t\geq 0}$ to \eqref{sol mean-field} 
such that $(\E[{\overline Z}_t])_{t\geq 0}$ is locally bounded.

\vip

(ii) It is possible to build simultaneously the Hawkes process $(Z^{N,1}_t,\dots,Z^{N,N}_t)_{t\geq 0}$ 
with parameters $(\GG_N,\boldsymbol \varphi^N,\boldsymbol h^N)$
and an i.i.d. family $({\overline Z}^i_t)_{t\geq 0,i=1,\dots,N}$ of solutions to \eqref{sol mean-field}
in such a way that for all $T>0$, all $i=1,\dots,N$,
$$
\E\Big[\sup_{[0,T]} |Z^{N,i}_t -{\overline Z}^i_t |\Big] \leq C_T N^{-1/2},
$$
the constant $C_T$ depending only on $h$, $\varphi$ and $T$ (see Remark \ref{vitesse} below
for some bounds of $C_T$ in a few situations).

\vip

(iii) Consequently, we have the mean-field approximation
$$\frac{1}{N}\sum_{i = 1}^N \delta_{(Z^{N,i}_t)_{t\geq 0}}\longrightarrow 
{\mathcal L}\big(({\overline Z}_t)_{t\geq 0}\big)\;\;
\text{in probability, as}\;\;N\rightarrow \infty,
$$
where $\cP(\dd([0,\infty),\rr))$ is endowed with the weak convergence topology associated
with the topology (on $\dd([0,\infty),\rr)$) of the uniform convergence on compact time intervals.
\end{thm}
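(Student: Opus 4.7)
The plan is to handle the three assertions in order, with part (ii) doing the real work and parts (i) and (iii) following by standard manipulations.

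For (i), I would reduce the limit equation to a deterministic Volterra equation. Taking expectations in \eqref{sol mean-field} and setting $m(t)=\E[\bar Z_t]$, we get $m(t)=\int_0^t h\bigl(\int_0^s\varphi(s-u)dm(u)\bigr)ds$. Existence and uniqueness of a locally bounded solution $m$ follows from a Picard iteration together with Lemma \ref{grrrr}, exactly as in the proof of Theorem \ref{existence uniqueness Y}, using $|h|_{lip}<\infty$ and local integrability of $\varphi$. Once $m$ is determined, the function $\lambda(t):=h\bigl(\int_0^t\varphi(t-u)dm(u)\bigr)$ is deterministic and locally bounded, and $\bar Z$ is just the inhomogeneous Poisson process of intensity $\lambda$ obtained from $\pi$ by the usual thinning representation; pathwise uniqueness in the class of processes with $\E[\bar Z_t]$ locally bounded is immediate.

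For (ii), I would couple by taking a single Poisson measure $\pi^i$ to drive both $Z^{N,i}$ and $\bar Z^i$, with the $\bar Z^i$ being i.i.d.\ copies of the solution from (i). Set $\Delta^i_t=\int_0^t|d(Z_s^{N,i}-\bar Z_s^i)|$ and $\delta_t=\E[\Delta^1_t]$ (all indices give the same value by exchangeability). Arguing as in the uniqueness proof of Theorem \ref{existence uniqueness Y}, one gets
\begin{align*}
\delta_t\le |h|_{lip}\int_0^t\E\Bigl|\tfrac1N\sum_{j=1}^N\int_0^{s-}\varphi(s-u)d(Z_u^{N,j}-\bar Z_u^j)\Bigr|ds
+|h|_{lip}\int_0^t\E\Bigl|\tfrac1N\sum_{j=1}^N\int_0^{s-}\varphi(s-u)d\bar Z_u^j-\int_0^s\varphi(s-u)dm(u)\Bigr|ds.
\end{align*}
The first (coupling) term is controlled by $|h|_{lip}\int_0^t|\varphi(t-u)|\delta_u du$ via exchangeability and Lemma \ref{tlt}. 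The second (fluctuation) term is the key point: using that $M^j_u:=\bar Z_u^j-\int_0^u\lambda(v)dv$ is an $L^2$ martingale with $d[M^j]_u=d\bar Z_u^j$ and the $M^j$ are i.i.d., Cauchy--Schwarz and orthogonality yield
\[
\E\Bigl|\tfrac1N\sum_j\int_0^{s-}\varphi(s-u)dM^j_u\Bigr|\le \Bigl(\tfrac1N\int_0^s\varphi(s-u)^2\lambda(u)du\Bigr)^{1/2},
\]
which is $O(N^{-1/2})$ uniformly on $[0,T]$ thanks to the local square integrability of $\varphi$. Plugging this in gives $\delta_t\le C_TN^{-1/2}+|h|_{lip}\int_0^t|\varphi(t-u)|\delta_u du$, and Lemma \ref{grrrr} yields $\delta_t\le C_T N^{-1/2}$ on $[0,T]$. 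Since $Z^{N,i}$ and $\bar Z^i$ are counting processes, $\sup_{[0,T]}|Z^{N,i}_t-\bar Z^i_t|\le \Delta^i_T$, giving the announced bound. The main obstacle here is the fluctuation term: everything else is a direct transcription of the Gronwall argument already used, but the $N^{-1/2}$ rate requires exactly the stronger local $L^2$ assumption on $\varphi$ made in Assumption \ref{mean field hyp}.

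For (iii), I would apply the standard Sznitman reduction. It suffices to show that for every bounded Lipschitz $F:\dd([0,\infty),\rr)\to\rr$ (for a metric compatible with uniform convergence on compact sets),
\[
\E\Bigl|\tfrac1N\sum_{i=1}^N F(Z^{N,i})-\E[F(\bar Z)]\Bigr|\longrightarrow 0.
\]
Splitting into $\tfrac1N\sum_i[F(Z^{N,i})-F(\bar Z^i)]$ and $\tfrac1N\sum_i[F(\bar Z^i)-\E F(\bar Z)]$, the first piece is $O(N^{-1/2})$ by (ii), and the second is $O(N^{-1/2})$ by the classical $L^2$ law of large numbers for the i.i.d.\ family $(\bar Z^i)$. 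This yields convergence in probability of the empirical measure to $\cL(\bar Z)$ in the desired weak topology.
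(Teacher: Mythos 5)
Your proposal is correct and follows essentially the same route as the paper: reduction of the limit equation to the deterministic Volterra equation for $m$ (the paper's Lemma \ref{expuni}), the coupling of $Z^{N,i}$ and $\overline Z^i$ through the same Poisson measure $\pi^i$ with the Gr\"onwall-type bound on $\delta_N$, and the $N^{-1/2}$ fluctuation estimate via the compensated-Poisson $L^2$ isometry, which is exactly the paper's computation of $\Var X^1_s=\int_0^s\varphi^2(s-u)\,dm_u$. The only cosmetic difference is in (iii), where you verify the Sznitman equivalence by hand with bounded Lipschitz test functions instead of citing \cite[Proposition 2.2]{s}, which changes nothing of substance.
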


\begin{proof}
For $({\overline Z}_t)_{t\geq 0}$ a solution to \eqref{sol mean-field}, the equation satisfied by 
$m_t=\E[{\overline Z}_t]$ writes
\begin{equation} \label{sol moyenne}
m_t = \int_0^t h\big( \int_0^s \varphi(s-u)dm_u\big)ds\;\;\text{for every}\;\;t\geq 0.
\end{equation}
By Lemma \ref{expuni}, 
we know that this equation has a unique non-decreasing locally bounded solution, which furthermore
is of class $C^1$ on $[0,\infty)$.
We now split the proof in several steps.

\vip

{\it Step 1.} Here we prove the well-posedness of \eqref{sol mean-field}. 
For $({\overline Z}_t)_{t \geq 0}$ a solution to \eqref{sol mean-field},
its expectation $m_t=\E[{\overline Z}_t]$ solves \eqref{sol moyenne} and is thus uniquely defined.
Thus the right hand side of \eqref{sol mean-field} is uniquely
determined, which proves uniqueness. For the existence, consider $m$ the unique
solution to \eqref{sol moyenne} and put ${\overline Z}_t=
\int_0^t\int_0^\infty {\bf 1}_{\{z \leq h(\int_0^s \varphi(s-u)dm_u)\}}\pi(ds\,dz)$.
We thus only have to prove that $\E[{\overline Z}_t]=m_t$. But 
$\E[{\overline Z}_t]=\intot h(\int_0^s \varphi(s-u)dm_u)ds$, which is nothing but $m_t$
since $m$ solves \eqref{sol moyenne}.

\vip

{\it Step 2.} We next introduce a suitable coupling.
Let $(\pi^i(ds\,dz))_{i\geq 1}$ be an i.i.d. family of Poisson
measures with common intensity measure $dsdz$ on
$[0,\infty) \times [0,\infty)$. For each $N\geq 1$, we consider the Hawkes process
$(Z^{N,1}_t,\dots,Z^{N,N}_t)_{t\geq 0}$
\begin{align*}
Z^{N,i}_t=&\int_0^t \int_0^\infty {\bf 1}_{\big\{z \leq h \big(N^{-1}\sum_{j=1}^N\int_0^{s-}\varphi(s-u)dZ_u^{N,j}\big)\big\}}
\pi^i(ds\,dz).
\end{align*}
Next, still denoting by $m$ the unique solution to \eqref{sol moyenne}, we put, for
every $i\geq 1$,
\begin{align*}
\overline{Z}^{i}_t=&\int_0^t \int_0^\infty {\bf 1}_{\big\{z \leq h \big( \int_0^{s-}\varphi(s-u)dm_u\big)\big\}}
\pi^i(ds\,dz).
\end{align*}
Clearly, $((\overline{Z}^{i}_t)_{t\geq 0})_{i\geq 1}$ is an i.i.d. family of solutions to
\eqref{sol mean-field}.

\vip

{\it Step 3.} Here we introduce $\Delta_{N}^{i}(t)= \intot |d({\overline Z}^i_u-Z^{N,i}_u)|$
and  $\delta_N(t)=\E[\Delta_{N}^{i}(t)]$, which obviously does not depend on $i$ (by exchangeability).
Observe that 
\begin{align}\label{tictac}
\sup_{[0,t]} |{\overline Z}^i_u-Z^{N,i}_u| \leq \Delta_N^i(t), \quad \hbox{whence} \quad
\E\Big[\sup_{[0,t]} |{\overline Z}^i_u-Z^{N,i}_u| \Big] \leq \delta_N(t).
\end{align}
We show in this step that for all $t>0$,
\begin{align}\label{tactic}
\delta_N(t)\leq |h|_{lip} N^{-1/2} 
\intot \Big( \int_0^s \varphi^2(s-u) dm_u \Big)^{1/2}ds
+ |h|_{lip}\intot |\varphi(t-s)| \delta_N(s) ds.
\end{align}
First, $\Delta_{N}^{1}(t)$ equals
$$
\int_0^t \int_0^\infty
\Big|{\bf 1}_{\big\{z \leq h \big(N^{-1}\sum_{j=1}^N\int_0^{s-}\varphi(s-u)dZ_u^{N,j}\big)\big\}}
-{\bf 1}_{\big\{z \leq h \big(\int_0^{s-}\varphi(s-u)dm_u\big)\big\}}\Big|
\pi^i(ds\,dz).
$$
Taking expectations, we find
\begin{align*}
\delta_N(t)=&\intot \E\Big[\Big|h \big( \int_0^{s}\varphi(s-u)dm_u\big)
-h \big(N^{-1}\sum_{j=1}^N\int_0^{s}\varphi(s-u)dZ_u^{N,j}\big)\Big| \Big] ds,
\end{align*}
whence
\begin{align}
\delta_N(t)\leq & |h|_{lip} \intot \E\Big[\Big|\int_0^{s}\varphi(s-u)dm_u
-N^{-1}\sum_{j=1}^N\int_0^{s}\varphi(s-u)d{\overline Z}_u^{j}\Big| \Big] ds \nonumber\\
&+ |h|_{lip} 
\intot \E\Big[\Big|N^{-1}\sum_{j=1}^N\int_0^{s}\varphi(s-u)d[{\overline Z}_u^{j} -Z_u^{N,j}]\Big| 
\Big] ds  \nonumber \\
=& |h|_{lip}(A+ B). \label{of1}
\end{align}
Using exchangeability and Lemma \ref{tlt},
\begin{align}\label{of1bis}
B \leq& \intot \E\Big[\int_0^{s}|\varphi(s-u)|d\Delta_{N}^{1}(u)\Big] ds
= \intot |\varphi(t-u)| \delta_N(u) du.
\end{align}
Next, we use that $X^j_s=\int_0^{s}\varphi(s-u)d{\overline Z}_u^j$ are i.i.d. with mean
$\int_0^{s}\varphi(s-u)dm_u$, whence 
\begin{align}\label{of2}
A \leq N^{-1/2} \intot (\Var X^1_s)^{1/2} ds.
\end{align}
But it holds that
$$
X^1_s=\int_0^s \int_0^\infty \indiq_{\{z\leq h(\int_0^{u}\varphi(u-r)dm_r)\}} \varphi(s-u) \pi^1(du\,dz).
$$
Since the integrand is deterministic, denoting by $\tilde \pi^1$ the compensated Poisson measure,
$$
X^1_s-\E[X^1_s]=\int_0^s \int_0^\infty \indiq_{\{z\leq h(\int_0^{u}\varphi(u-r)dm_r)\}}
\varphi(s-u) \tilde \pi^1(du\,dz).
$$
Recalling Assumption \ref{mean field hyp}, we find
\begin{align}\label{of3}
\Var X^1_s = & \int_0^s \varphi^2(s-u) h\big(\int_0^u \varphi(u-r)dm_r\big)ds
= \int_0^s \varphi^2(s-u) dm_u.
\end{align}
We used \eqref{sol moyenne} for the last inequality. Gathering \eqref{of1}, \eqref{of1bis}, \eqref{of2}
and \eqref{of3} completes the step.

\vip

{\it Step 4.} Here we conclude that for all $T\geq 0$,
$\sup_{[0,T]}\delta_N(t) \leq  C_T N^{-1/2}$. This will end the proof of (ii) by \eqref{tictac}.
This is not hard: it suffices to start from \eqref{tactic}, to apply Lemma \ref{grrrr}-(i) and to observe that
$\intot \Big( \int_0^s \varphi^2(s-u) dm_u \Big)^{1/2}ds$ is locally bounded 
(which follows from the assumption that $\varphi$ is locally square integrable
and the fact that $m$ is $C^1$ on $[0,\infty)$).

\vip

{\it Step 5.} Finally, (iii) follows from (ii): by Sznitman \cite[Proposition 2.2]{s},
it suffices to check that for each fixed $\ell\geq 1$, $((Z^{N,1}_t)_{t\geq 0},\dots,(Z^{N,\ell}_t)_{t\geq 0})$
goes in law, as $N\to\infty$, to $\ell$ independent copies of $(\overline{Z}_t)_{t\geq 0}$
(for the uniform topology on compact time intervals). This clearly follows from (ii).
\end{proof}

We now want to show that the constant $C_T$ we get can be quite satisfactory.

\begin{rk}\label{vitesse} Work under Assumption \ref{mean field hyp}.

\vip

(a) Assume that $|h|_{lip}\int_0^\infty |\varphi(s)|ds <1$ (subcritical case) and that 
$\int_0^\infty \varphi^2(s)ds<\infty$. 
Then (ii) of Theorem \ref{mean-field approx} holds with
$C_T=CT$, for some constant $C>0$. This is a satisfactory slow growth.

\vip

(b) Assume that $h(x)=\mu+x$ for some $\mu>0$ and that 
$\varphi(t)=a e^{-b t}$ for some $a>b>0$ (if $a<b$, then point (a) applies).
Then $m_t=\E[\overline{Z}_t]\sim \mu a (a-b)^{-2} e^{(a-b)t}$ as $t\to \infty$
and (ii) of Theorem \ref{mean-field approx} holds with
$C_T=C e^{(a-b)T}$, for some constant $C>0$. This is again quite satisfactory: the error is of order
$N^{-1/2}m_T$.
\end{rk}

\begin{proof}
We start with (a). Using the notation of the previous proof, it suffices (see \eqref{tictac}) to show that
$\delta_N(T) \leq C T N^{-1/2}$.
Setting 
$\Lambda = |h|_{lip}\int_0^\infty |\varphi(s)|ds <1$, starting from \eqref{tactic} 
and observing that $\delta_N$ is non-decreasing, 
we find
$\delta_N(t) \leq |h|_{lip}N^{-1/2}\intot (\int_0^s \varphi^2(s-u)dm_u)^{1/2}ds 
+ \Lambda \delta_N(t)$,
whence $\delta_N(t) \leq CN^{-1/2} \intot (\int_0^s \varphi^2(s-u)dm_u)^{1/2}ds$. 
We thus only have to check that $\int_0^s \varphi^2(s-u)dm_u$ is bounded on $[0,\infty)$. 
Since $\int_0^\infty \varphi^2(s)ds<\infty$, it suffices to prove that $m'$ is bounded on $[0,\infty)$.
But $m'_t = h(\int_0^t \varphi(t-u)m'_udu) \leq h(0)+  |h|_{lip} \int_0^t |\varphi(t-u)|m'_udu$, whence
$\sup_{[0,T]} m'_t \leq   h(0)+ \Lambda \sup_{[0,T]} m'_t$ and thus $\sup_{[0,T]} m'_t \leq   h(0)/(1-\Lambda)$ 
for any $T>0$.

\vip

We next check (b). First, \eqref{sol moyenne} rewrites 
$m_t=\mu t +  a \intot \int_0^s e^{-b(s-u)}dm_u ds$, with unique solution 
$$
m_t=\frac{-\mu b t}{ a -b}  + \frac{\mu a(e^{( a -b)t} -1)}
{( a -b)^2} \sim \frac{\mu a}
{( a -b)^2}  e^{( a -b)t}.
$$
Next, using \eqref{tactic} and the explicit
expressions of $h$, $\varphi$ and $m$, we find 
\begin{align*}
\delta_N(t) \leq&  N^{-1/2}\intot \Big(\int_0^s \varphi^2(s-u)dm_u\Big)^{1/2}ds
+ \intot \varphi(t-s)\delta_N(s)ds \\
\leq & C N^{-1/2} e^{( a -b)t /2} + a \intot e^{-b(t-s)}\delta_N(s)ds.
\end{align*}
Setting $u_N(t)=\delta_N(t)e^{b t}$, we get $u_N(t)\leq
C N^{-1/2} e^{( a + b)t /2} + a\intot u_N(s)ds$. By  Gr\"onwall's lemma,
$u_N(t) \leq C N^{-1/2} e^{( a + b)t /2} + 
a \intot C N^{-1/2} e^{( a + b)s /2}e^{a(t-s)}ds$. 
On easily deduces, since $ a>b$, that 
$u_N(t) \leq C N^{-1/2} e^{ a t}$ so that
$\delta_N(t) \leq C N^{-1/2} e^{(a-b)t}$. The use of \eqref{tictac} ends the proof.
\end{proof}

\subsection{Large time behaviour}

We now address the important problem of the large time behaviour.
Since the solution $(\overline{Z}_t)_{t\geq 0}$ to \eqref{sol mean-field} is nothing but
an inhomogeneous Poisson process, its large-time behaviour is easily and precisely described, provided we have
sufficiently information on the solution to \eqref{sol moyenne}.
The question is thus: can we use the large time estimates of the mean-field limit to describe the
large-time behaviour of the true Hawkes process with a large number of particles?
To fix the ideas, we consider the linear case. 
\vip

We treat separately the subcritical and supercritical cases.

\begin{thm}\label{PlimdlimT}
Work under Assumption \ref{mean field hyp} with $\varphi$ nonnegative and $h(x)=\mu+x$ for some $\mu>0$. 
Assume also that $\Lambda=\int_0^\infty \varphi(s) ds <1$.
For each $N\geq 1$, consider the Hawkes process $(Z^{N,1}_t,\dots,Z^{N,N}_t)_{t\geq 0}$
with parameters $(\GG_N,\boldsymbol \varphi^N,\boldsymbol h^N)$. Consider also
the unique solution $(m_t)_{t\geq 0}$ to \eqref{sol moyenne}.

\vip

1. We have $m_t \sim a_0 t$ as $t \to \infty$, where $a_0=\mu/(1-\Lambda)$.

\vip

2. For any fixed $i\geq 1$, $Z^{N,i}_t/m_t$ tends to $1$ in probability as $t\to \infty$, uniformly in $N$.
More precisely, $\E[|Z^{N,i}_t/m_t -1|] \leq C m_t^{-1/2}$ for some constant $C$.

\vip

3. For any fixed $\ell\geq 1$
$(m_t^{1/2}(Z^{N,i}_t/m_t-1))_{i=1,\dots,\ell}$ goes in law to $\cN(0,I_\ell)$ as $(t,N)\to (\infty,\infty)$ 
(without condition on the regime).
\end{thm}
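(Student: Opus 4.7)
The plan is to address the three statements in order, with Part 2 providing the key uniform-in-$N$ second-moment control that drives Part 3.

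\emph{Part 1.} Since $h(x)=\mu+x$, \eqref{sol moyenne} yields $m\in C^1$ with $m'_t=\mu+(\varphi\star m')_t$. Taking Laplace transforms gives $\cL_{m'}(\alpha)=\mu/[\alpha(1-\cL_\varphi(\alpha))]$, and since $\cL_\varphi(0^+)=\Lambda<1$, either Karamata's Tauberian theorem applied to $\cL_{m'}$ or the renewal theorem applied directly to $m'=\mu+\varphi\star m'$ yields $m'_t\to a_0$, whence $m_t\sim a_0 t$.

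\emph{Part 2.} The first observation is that $\E[Z^{N,i}_t]=m_t$ for every $N,i$: taking expectations in the intensity equation, $m^N_t:=\E[Z^{N,1}_t]$ solves the same equation \eqref{sol moyenne} as $m_t$ by exchangeability and linearity of $h$, so the two coincide. Introducing the compensated martingale $M^{N,i}_t=Z^{N,i}_t-\int_0^t\lambda^{N,i}_s\,ds$ and $D^{N,i}_t=Z^{N,i}_t-m_t$, Lemma \ref{tlt} yields
\[
D^{N,i}_t = M^{N,i}_t + \int_0^t \varphi(t-u)\,\bar D^N_u\,du,\qquad \bar D^N_u:=\tfrac{1}{N}\sum_{j=1}^N D^{N,j}_u.
\]
The martingales $M^{N,i}$ are pairwise orthogonal (their cross-brackets vanish because $Z^{N,i}$ and $Z^{N,j}$ cannot jump simultaneously) with $\|M^{N,i}_t\|_2^2=\E[\int_0^t\lambda^{N,i}_s\,ds]=m_t$, so averaging over $i$ and applying Minkowski to $\bar D^N_t=\bar M^N_t+\int_0^t\varphi(t-u)\bar D^N_u\,du$ gives
\[
\|\bar D^N_t\|_2 \leq \sqrt{m_t/N} + \int_0^t \varphi(t-u)\,\|\bar D^N_u\|_2\,du.
\]
Since $\Lambda<1$, Lemma \ref{grrrr} closes this Volterra inequality into $\|\bar D^N_t\|_2 \leq C\sqrt{m_t/N}$; plugging back yields $\|D^{N,i}_t\|_2 \leq C\sqrt{m_t}$ uniformly in $N$, and Jensen's inequality delivers $\E[|Z^{N,i}_t/m_t-1|]\leq Cm_t^{-1/2}$.

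\emph{Part 3.} I decompose
\[
\sqrt{m_t}\bigl(Z^{N,i}_t/m_t-1\bigr) = \frac{M^{N,i}_t}{\sqrt{m_t}} + r^{N,i}_t,\qquad r^{N,i}_t := \frac{1}{\sqrt{m_t}}\int_0^t \varphi(t-u)\,\bar D^N_u\,du.
\]
By Part 2, $\|r^{N,i}_t\|_2 \leq C/\sqrt{N}\to 0$ as $N\to\infty$. The joint convergence $(M^{N,i}_t/\sqrt{m_t})_{i=1,\dots,\ell}\Rightarrow \cN(0,I_\ell)$ then follows from the standard martingale CLT for counting processes (Jacod--Shiryaev \cite{js}): the cross-brackets $\langle M^{N,i},M^{N,j}\rangle_t$ vanish for $i\ne j$; the difference $\langle M^{N,i}\rangle_t-m_t=\int_0^t\varphi(t-u)\,\bar D^N_u\,du$ has $L^2$-norm $O(\sqrt{t/N})$, so $\langle M^{N,i}\rangle_t/m_t\to 1$ in probability as $(t,N)\to(\infty,\infty)$; and the Lindeberg condition is immediate since the jumps of $M^{N,i}$ have fixed size $1$ while $\sqrt{m_t}\to\infty$.

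The main obstacle is the uniform-in-$N$ variance estimate of Part 2: the mean-field feedback $\int_0^t\varphi(t-u)\,\bar D^N_u\,du$ couples all second moments through a self-referential Volterra inequality, and closing it uniformly in $N$ relies precisely on the subcriticality $\Lambda<1$. Once this control is secured, the vanishing of the remainder $r^{N,i}_t$ and the martingale CLT in Part 3 are essentially routine, and the fact that no scaling regime between $t$ and $N$ is needed reflects that the two sources of fluctuation ($M^{N,i}_t/\sqrt{m_t}$ and $r^{N,i}_t$) decouple cleanly in $t$ and $N$ respectively.
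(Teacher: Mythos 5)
Your overall architecture is the same as the paper's: the identification $\E[Z^{N,i}_t]=m_t$, the decomposition $Z^{N,i}_t-m_t=M^{N,i}_t+\int_0^t\varphi(t-s)\overline{D}^N_s\,ds$ with $\overline{D}^N$ the averaged centred process, orthogonality of the compensated martingales with $[M^{N,i},M^{N,i}]_t=Z^{N,i}_t$, and the CLT reduced to the martingale part once the remainder is shown to be $O(N^{-1/2})$. (Part 1 via the defective renewal equation or a Tauberian argument is a legitimate alternative to the paper's elementary Lemma \ref{Plemlim}.) Two steps, however, do not work as written.

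First, in Part 2 you close the Volterra inequality $\|\overline{D}^N_t\|_2\leq\sqrt{m_t/N}+\int_0^t\varphi(t-s)\|\overline{D}^N_s\|_2\,ds$ by invoking Lemma \ref{grrrr}. That lemma only gives $\sup_{[0,T]}u\leq C_T\sup_{[0,T]}g$ with $C_T$ growing with $T$ (exponentially, in its proof), which is not enough here: the bound in Part 2 must be uniform in $t$, and in Part 3 you need $\|r^{N,i}_t\|_2\leq CN^{-1/2}$ with $C$ independent of $t$, since the joint limit imposes no relation between $t$ and $N$. The closing must use $\Lambda<1$ itself: since $t\mapsto\sqrt{m_t/N}$ is nondecreasing, taking suprema over $[0,t]$ on both sides gives $\sup_{[0,t]}\|\overline{D}^N\|_2\leq(1-\Lambda)^{-1}\sqrt{m_t/N}$; this is exactly what the paper's Step 2 does, in $L^1$ of the running supremum with Doob's inequality applied to $\overline{M}^N$. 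Moreover, your $L^2$ computation presupposes that $\|\overline{D}^N_t\|_2$ is finite and locally bounded, i.e.\ that $Z^{N,i}_t$ has second moments, which Theorem \ref{existence uniqueness Y} does not provide; either prove this, or bypass it via the pathwise resolvent identity $\overline{D}^N=\overline{M}^N+\big(\sum_{n\geq1}\varphi^{\star n}\big)\star\overline{M}^N$ (the subcritical resolvent has total mass $\Lambda/(1-\Lambda)$), or stay in $L^1$ as the paper does.

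Second, in Part 3 you verify the bracket condition only at the terminal time $t$ and then appeal to ``the standard martingale CLT''. The results in Jacod--Shiryaev are functional limit theorems for a sequence of martingales on a fixed time interval and require convergence of the brackets at all (or a dense set of) intermediate times; handling the joint limit $(t,N)\to(\infty,\infty)$ is precisely why the paper proves Lemma \ref{jactass}, which time-changes the martingales onto $[0,1]$ along an arbitrary sequence $t_N\to\infty$ before applying Theorem VIII-3.11. Your argument can be completed in the same spirit: for instance, check that $[M^{N,i},M^{N,i}]_{ut}/m_t=Z^{N,i}_{ut}/m_t\to u$ in probability for each $u\in[0,1]$, which follows from your Part 2 estimate combined with $m_{ut}/m_t\to u$ (a consequence of $m_t\sim a_0 t$). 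As written, though, the citation does not cover the double limit, and this device (or the paper's Lemma \ref{jactass}) is the missing ingredient.
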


\begin{thm}\label{GlimdlimT}
Work under Assumption \ref{mean field hyp} with $\varphi$ nonnegative and $h(x)=\mu+x$ for some $\mu>0$.
Assume also that  $\Lambda=\int_0^\infty \varphi(s) ds \in (1,\infty]$. 
Assume finally that $t\mapsto \intot |d\varphi(s)|$  has at most polynomial growth. 
For each $N\geq 1$, consider the Hawkes process $(Z^{N,1}_t,\dots,Z^{N,N}_t)_{t\geq 0}$
with parameters $(\GG_N,\boldsymbol \varphi^N,\boldsymbol h^N)$.
Consider also the unique solution $(m_t)_{t\geq 0}$ to \eqref{sol moyenne}.

\vip

1. We  have $m_t \sim a_0 e^{\alpha_0 t}$ 
as $t \to \infty$, where $\alpha_0>0$ is determined by $\cL_\varphi(\alpha_0)=1$
and where $a_0=\mu \alpha_0^{-2}(\int_0^\infty t \varphi(t)e^{-\alpha_0 t}dt)^{-1}$.

\vip

2. For any fixed $i\geq 1$,
$Z^{N,i}_t/m_t$ tends to $1$ in probability as $(t,N)\to (\infty,\infty)$.
More precisely, there is a constant $C$ such that 
$\E[|Z^{N,i}_t/m_t -1|] \leq m_t^{-1/2} + CN^{-1/2}(1+m_t^{-1})$.

\vip

3. For any fixed $\ell\geq 1$,

\vip

(i)  $(m_t^{1/2}(Z^{N,i}_t/m_t-1))_{i=1,\dots,\ell}$ goes in law to $\cN(0,I_\ell)$ if
$t\to \infty$ and $N\to\infty$ with $m_t/N \to 0$;

\vip

(ii)  $(N^{1/2}(Z^{N,i}_t/m_t-1))_{i=1,\dots,\ell}$ goes in law to $(X,\dots,X)$,
if $t\to \infty$ and $N\to\infty$ with $m_t/N \to \infty$. Here $X$ is a $\cN(0,\sigma^2)$-distributed 
random variable, where $\sigma^2=\alpha_0^{2}\mu^{-2} \int_0^\infty e^{-2\alpha_0s}m'_sds$.
\end{thm}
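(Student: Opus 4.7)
\emph{Part 1.} Differentiating \eqref{sol moyenne} with $h(x)=\mu+x$ gives the renewal equation $m'_t=\mu+(\varphi\star m')_t$. Setting $f(t)=e^{-\alpha_0 t}m'_t$ and using $\cL_\varphi(\alpha_0)=1$ to see that $e^{-\alpha_0\cdot}\varphi$ is a probability density, this becomes the classical renewal equation $f=\mu e^{-\alpha_0\cdot}+(e^{-\alpha_0\cdot}\varphi)\star f$; the assumed polynomial control on $\int_0^t|d\varphi|$ provides the hypotheses of the Key Renewal Theorem, which yields $f(t)\to a_0\alpha_0$, whence $m'_t\sim a_0\alpha_0 e^{\alpha_0 t}$ and $m_t\sim a_0 e^{\alpha_0 t}$.

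\emph{Part 2.} Decompose $Z^{N,i}_t-m_t=(Z^{N,i}_t-\overline Z^i_t)+(\overline Z^i_t-m_t)$ via the coupling of Theorem \ref{mean-field approx}. Since $\overline Z^i_t$ is Poisson with mean $m_t$, the second term has $L^1$ norm at most $m_t^{1/2}$, producing the $m_t^{-1/2}$ contribution after division by $m_t$. For the coupling error, $m'_u\le Ce^{\alpha_0 u}$ by Part 1, so the source in \eqref{tactic} is of order $e^{\alpha_0 t/2}N^{-1/2}$; applying the (supercritical) renewal resolvent of $\varphi$ inflates this to $\delta_N(t)\le CN^{-1/2}e^{\alpha_0 t}\le CN^{-1/2}(m_t+1)$, yielding the announced bound after division by $m_t$.

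\emph{Part 3, master decomposition.} Let $M^{N,i}_t=Z^{N,i}_t-\int_0^t\lambda^{N,i}_s\,ds$: these are orthogonal square-integrable martingales ($\langle M^{N,i},M^{N,j}\rangle\equiv 0$ for $i\ne j$ since jumps never coincide) with $\E\langle M^{N,i}\rangle_t=\E Z^{N,i}_t\sim m_t$. Averaging the SDEs and using Lemma \ref{tlt}, the empirical mean satisfies $\overline Z^N_t=\mu t+(\varphi\star\overline Z^N)_t+U^N_t$ with $U^N_t=N^{-1}\sum_{i=1}^N M^{N,i}_t$; subtracting the equation for $m_t$, the fluctuation $\eta^N_t=\overline Z^N_t-m_t$ obeys $\eta^N=U^N+\varphi\star\eta^N$. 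This yields the master identity $Z^{N,i}_t-m_t=M^{N,i}_t-U^N_t+\eta^N_t$ and, via the renewal resolvent $R$ of $\varphi$ (which in the supercritical case satisfies $R(v)\sim c_1 e^{\alpha_0 v}$ with $c_1=\alpha_0^2 a_0/\mu$), the representation $\eta^N_t=\int_0^t(1+\overline R(t-u))\,dU^N_u$ with $\overline R(v)=\int_0^v R(s)\,ds$.

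\emph{Part 3, CLT regimes.} In regime (i), scale by $m_t^{-1/2}$: Part 2 and orthogonality give $m_t^{-1}\langle M^{N,i}\rangle_t\to 1$ in probability, so the multivariate martingale CLT yields $(m_t^{-1/2}M^{N,i}_t)_{i\le\ell}\to\cN(0,I_\ell)$, while direct variance computations from the representation of $\eta^N$ give $\mathrm{Var}(m_t^{-1/2}U^N_t)=O(1/N)$ and $\mathrm{Var}(m_t^{-1/2}\eta^N_t)=O(m_t/N)$, both vanishing under $m_t/N\to 0$. In regime (ii), scale by $N^{1/2}/m_t$: the martingale contributions $N^{1/2}M^{N,i}_t/m_t$ and $N^{1/2}U^N_t/m_t$ have variances $N/m_t\to 0$ and $1/m_t\to 0$, so only $N^{1/2}\eta^N_t/m_t$ survives, which is identical across $i$ and produces the $(X,\dots,X)$ limit. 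A variance computation using $(1+\overline R)(v)\sim(c_1/\alpha_0)e^{\alpha_0 v}$ together with the identity $(c_1/(\alpha_0 a_0))^2=\alpha_0^2/\mu^2$ gives $\mathrm{Var}(N^{1/2}\eta^N_t/m_t)\to\sigma^2$, and Gaussianity follows from the functional martingale CLT for the time-changed $N^{-1/2}\sum_i M^{N,i}_\cdot$ (whose bracket converges to $m_\cdot$) composed with the continuous linear functional $u\mapsto\int_0^t(1+\overline R(t-s))\,du(s)$. The main obstacle is that this composition must be handled along the joint limit $(t,N)\to(\infty,\infty)$: the natural fix is to truncate the convolution at a fixed horizon $T_0$, control the tail via the integrability of $e^{-\alpha_0 s}m'_s$ (which ensures the improper integral defining $\sigma^2$ converges), apply the CLT on the truncated portion, and then let $T_0\to\infty$.
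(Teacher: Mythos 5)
Your proposal is correct, and its skeleton is the same as the paper's: differentiate \eqref{sol moyenne} and apply a renewal theorem for Part 1, then use the martingale decomposition $Z^{N,i}_t-m_t=M^{N,i}_t-\overline M^N_t+\overline U^N_t$ together with the resolvent representation $\overline U^N_t=\int_0^t(1+\Upsilon(t-s))\,d\overline M^N_s$ (your $\eta^N,U^N,\overline R$) and a martingale CLT, with the weight $m_t^{-1}(1+\Upsilon(t-s))\approx(\alpha_0/\mu)e^{-\alpha_0 s}$ producing $\sigma^2$. The genuine differences are local. For Part 1 you apply the Key Renewal Theorem directly to $e^{-\alpha_0 t}m'_t$ rather than, as the paper does, Feller's theorem to $\Gamma=\sum_n\varphi^{\star n}$; note, however, that your Parts 2--3 also use $R=\Gamma\sim(a_0\alpha_0^2/\mu)e^{\alpha_0 t}$ and $\Upsilon\sim(a_0\alpha_0/\mu)e^{\alpha_0 t}$, so you still need the $\Gamma$-asymptotics (the paper's Lemma \ref{Glemlim}, which is where the polynomial bound on $\int_0^t|d\varphi|$ enters), obtained by the same renewal argument applied to $\Gamma=\varphi+\Gamma\star\varphi$. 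For Part 2 you route the estimate through the coupling of Theorem \ref{mean-field approx} and \eqref{tactic}, inflating the source $CN^{-1/2}e^{\alpha_0 t/2}$ by the resolvent; this works (it needs $\int_0^\infty\varphi^2(v)e^{-\alpha_0 v}dv<\infty$, guaranteed by the polynomial growth of $\varphi$) and reproduces the stated bound, whereas the paper gets it more directly from $\overline U^N=\overline M^N+\Gamma\star\overline M^N$ and $\E[(\overline M^N_t)^2]=N^{-1}m_t$ without invoking the coupling. For Part 3(ii) you replace the paper's time-change device (Lemma \ref{jactass}, with the extra uniformity condition needed because $v_\infty=\sigma<\infty$) by a truncation at a fixed horizon $T_0$ plus a uniform tail bound from $\int_0^\infty e^{-2\alpha_0 s}m'_s\,ds<\infty$; that is a valid alternative handling of the joint limit $(t,N)\to(\infty,\infty)$. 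The only thin spot is Part 3(i): there too the convergence is along a joint limit with $t\to\infty$, so "the multivariate martingale CLT" cannot be quoted at a fixed horizon; you need either the same truncation/time-change trick or precisely the paper's Lemma \ref{jactass} (bounded jumps, vanishing cross brackets and $m_t^{-1}[M^{N,i},M^{N,i}]_t\to1$ are exactly its hypotheses, and you have all of them), so this is a matter of missing detail rather than a flawed step.
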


Let us summarize. At first order (law of large numbers), the mean-field approximation is always
good for large times. At second order (central limit theorem), the mean field approximation is
always good for large times in the subcritical case, but fails to be relevant for too large times (depending
on $N$) in the supercritical case: the independence property breaks down.

\vip

In the supercritical case, we have the technical condition that $t\mapsto \intot |d\varphi(s)|$  has at most 
polynomial growth. This is useful to have some precise estimates of the solution $m$ to \eqref{sol moyenne}.
This is, {\it e.g.} always satisfied when $\varphi$ is bounded and non-increasing, as is
often the case in applications. It is slightly restrictive however, since it forces $\varphi(0)$ to be finite.

\vip

It should be possible to study also the {\it critical} case, but then the situation is more intricate:
many regimes might arise.
With a little more work, we could also study, in the supercritical case, the regime where 
$m_t/N \to x\in(0,\infty)$.

\vip

In order to prove Theorems \ref{PlimdlimT} and \ref{GlimdlimT}, we will use the following central limit 
theorem for martingales.

\begin{lem}\label{jactass}
Let $\ell\geq 1$ be fixed. For $N\geq 1$, consider a family $(M^{N,1}_t,\dots,M^{N,\ell}_t)_{t\geq 0}$ of 
$\ell$-dimensional local martingales satisfying $M^{N,i}_0=0$.
Assume that all their jumps are uniformly bounded and that $[M^{N,i},M^{N,j}]_t=0$
for every $N\geq 1$, $i\ne j$ and $t\geq 0$. Assume also that there is a continuous increasing 
function $(v_{t})_{t\geq 0}:[0,\infty)\mapsto[0,\infty)$ such that for all $i=1,\dots,\ell$,
$\lim_{(t,N)\to(\infty,\infty)} v^{-2}_{t}[M^{N,i},M^{N,i}]_t = 1$ in probability. In the case where 
$v_\infty=\lim_{t\to \infty} v_t<\infty$,
assume moreover that for all $i=1,\dots,\ell$, all $t_0>0$, uniformly in $t\geq t_0$,
$\lim_{N\to\infty} [M^{N,i},M^{N,i}]_t = v^{2}_{t}$ in probability.

Then
$v^{-1}_{t}(M^{N,1}_t,...,M^{N,\ell}_t)$ converges in law to the Gaussian distribution $\cN(0,I_\ell)$
as $(t,N)\to(\infty,\infty)$, where $I_\ell$ is the $\ell\times\ell$ identity matrix.
\end{lem}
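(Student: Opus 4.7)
The plan is to reduce, via Cram\'er--Wold, to a scalar martingale central limit theorem and then verify the two hypotheses of a classical single-time CLT --- bracket convergence and Lindeberg --- in the two regimes $v_\infty=\infty$ and $v_\infty<\infty$. For the reduction, fix $\xi=(\xi_1,\dots,\xi_\ell)\in\rr^\ell$ and set $L^N_t=\sum_i\xi_iM^{N,i}_t$. The orthogonality $[M^{N,i},M^{N,j}]_t=0$ for $i\ne j$ yields $[L^N,L^N]_t=\sum_i\xi_i^2[M^{N,i},M^{N,i}]_t$, so $v_t^{-2}[L^N,L^N]_t\to|\xi|^2$ in probability jointly in $(t,N)$; $L^N$ also inherits the uniform-jump bound and, in the bounded regime, the uniform-convergence assumption. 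It thus suffices to prove $v_t^{-1}L^N_t\to\cN(0,|\xi|^2)$ for every $\xi$, i.e.\ the scalar case $\ell=1$; write $M^N$ for $M^{N,1}$ below.

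For any sequence $(t_k,N_k)\to(\infty,\infty)$, apply the classical martingale CLT at a single time point (Rebolledo; see e.g.\ Hall--Heyde Theorem 3.2 or Jacod--Shiryaev, Chapter VIII) to the rescaled martingales $X^k_s:=v_{t_k}^{-1}M^{N_k}_s$. It gives $X^k_{t_k}\to\cN(0,1)$ as soon as $[X^k,X^k]_{t_k}\to1$ in probability --- which is exactly our bracket hypothesis --- and the Lindeberg condition
\begin{equation*}
\sum_{s\le t_k}(\Delta X^k_s)^2\indiq_{|\Delta X^k_s|>\e}\longrightarrow 0\text{ in probability, for every }\e>0.
\end{equation*}
When $v_\infty=\infty$, Lindeberg is automatic: the jumps of $X^k$ are bounded by $K/v_{t_k}\to 0$, where $K$ is the uniform bound on $|\Delta M^N|$, so the indicator vanishes identically for $k$ large, and the CLT concludes.

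The main obstacle is the case $v_\infty<\infty$: here $v_{t_k}$ stays bounded, rescaled jumps do not shrink, and Lindeberg must be derived from the additional uniform-convergence assumption. Fix $t_0>0$ and $\eta>0$, and choose a partition $t_0=r_0<r_1<\cdots<r_m$ of $[t_0,\infty)$ such that $v_{r_{j+1}}^2-v_{r_j}^2<\eta$ for every $j$, which is possible since $v^2$ is continuous with finite limit. On the event $\bigl\{\sup_{t\ge t_0}\bigl|[M^{N_k},M^{N_k}]_t-v_t^2\bigr|<\eta\bigr\}$, whose probability tends to one by the uniform-convergence assumption, every increment $[M^{N_k}]_{r_{j+1}}-[M^{N_k}]_{r_j}$ is at most $3\eta$; since this quantity dominates $\sum_{s\in(r_j,r_{j+1}]}(\Delta M^{N_k}_s)^2$, it forces $\sup_{s\in(t_0,t_k]}|\Delta M^{N_k}_s|\le\sqrt{3\eta}$ on the same event. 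Taking $\eta$ small compared with $\e v_{t_0}$ makes the portion of the Lindeberg sum over $(t_0,t_k]$ vanish. The remaining contribution from $[0,t_0]$ is bounded by $v_{t_k}^{-2}[M^{N_k}]_{t_0}$, which converges in probability to $v_{t_0}^2/v_\infty^2$ and can be made arbitrarily small by taking $t_0$ small, using $v_0=0$ (forced by $[M^{N_k}]_0=0$, the pointwise hypothesis at $t=t_0$, monotonicity, and continuity of $v$). Combining the two estimates through the usual $\e$--$t_0$--$\eta$ bookkeeping yields Lindeberg, and the single-time CLT concludes.
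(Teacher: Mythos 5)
Your proof is correct in substance, but it follows a genuinely different route from the paper's. The paper fixes a sequence $t_N\to\infty$, time-changes each martingale through $\tau^N_u=\inf\{t\ge 0: v_t^2\geq u\,v_{t_N}^2\}$ so that the rescaled processes $L^{N,i}_u=v_{t_N}^{-1}M^{N,i}_{\tau^N_u}$ have brackets converging to $u$ on $[0,1]$, and invokes the functional martingale CLT (Jacod--Shiryaev, Theorem VIII-3.11) to get convergence to an $\ell$-dimensional Brownian motion, reading off the value at $u=1$; the extra uniform-convergence hypothesis in the case $v_\infty<\infty$ is used there because $\tau^N_u$ then stays bounded. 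You instead reduce by Cram\'er--Wold to the scalar case and apply a single-time martingale CLT along an arbitrary sequence $(t_k,N_k)$, verifying the Lindeberg condition by hand: trivially when $v_\infty=\infty$, and in the bounded case by using the uniform bracket convergence to show that, with probability tending to one, all jumps after a fixed small time $t_0$ have size $O(\sqrt{\eta})$ while the bracket accumulated on $[0,t_0]$ is negligible. Both proofs use the extra hypothesis for the same purpose; the paper's route buys a functional statement and never mentions Lindeberg, yours stays elementary and single-time. One point of care: Hall--Heyde Theorem 3.2 is a discrete-time statement and the Chapter VIII theorems of Jacod--Shiryaev are functional (bracket convergence at all times), so you should either cite the single-time CLT for local martingales in the precise form you need (uniformly bounded jumps, maximal jump tending to $0$ in probability --- which your Lindeberg bound delivers --- and terminal bracket tending to a constant) or note the short exponential-martingale argument that proves it.

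A second caveat concerns your parenthetical claim that $v_0=0$ is forced by the hypotheses: it is not. The uniform convergence is only assumed on $[t_0,\infty)$ for $t_0>0$, and the limits $t_0\downarrow 0$ and $N\to\infty$ cannot be interchanged; a martingale making a bounded centred jump at time $1/N$ satisfies every hypothesis with $v_0>0$, and in that situation the conclusion of the lemma itself can fail, so no proof can dispense with this point. Your argument genuinely needs $v_{0+}=0$ to make the $[0,t_0]$ contribution small, and so does the paper's (the identity $v^2_{\tau^N_u}=u\,v^2_{t_N}$ for all $u\in[0,1]$ presupposes that $v^2$ takes every value in $[0,v^2_{t_N}]$). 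Since $v_0=0$ holds in all applications of the lemma in the paper, treat it as an implicit assumption to be stated explicitly rather than as a consequence of the hypotheses.
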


\begin{proof}
Let $(t_N)_{N\ge 1}$ be a sequence of positive numbers such that $t_N \to\infty$.
We want to prove that $v^{-1}_{t_N}(M^{N,1}_{t_N},...,M^{N,\ell}_{t_N})$ converges in law to $\cN(0,I_\ell)$.
For all $u\in[0,1]$, set
$$\tau^N_u=\inf\{t\ge 0 : v^2_{t} \ge u\,v^{2}_{t_N}\}.$$
Since $v$ is increasing and continuous, $\tau^N$ is also continuous and increasing for each $N$.
We also clearly have $v^2_{\tau^N_u} = u v^{2}_{t_N}$ for all $u\in [0,1]$ and $\tau^N_1=t_N$.
Finally, for each $u>0$ fixed, the sequence $\tau^N_u$ is increasing.
\vip

For all $u\in(0,1]$, $\lim_N v^{-2}_{\tau^N_u}[M^{N,i},M^{N,i}]_{\tau^N_u}=1$ in probability. 
Indeed, in the case $v_\infty=\infty$, this follows from the facts that $\lim_N \tau^N_u=\infty$
and $\lim_{(t,N)\to(\infty,\infty)} v^{-2}_{t}[M^{N,i},M^{N,i}]_t = 1$.
When $v_\infty<\infty$, the additional assumption (uniformity in $t\geq t_0$ of the convergence as $N\to\infty$)
clearly suffices, since the sequence $\tau^N_u$ is increasing and thus bounded from below.

\vip

We define the martingales $(L^{N,i}_u)_{u\in[0,1]}$ by $L^{N,i}_u=v_{t_N}^{-1}M^{N,i}_{\tau^N_u}$.
All their jumps are uniformly bounded (because those of $M^{N,i}$ are assumed to be uniformly bounded and
because $\sup_N v_{t_N}^{-1}<\infty$ since $v$ is increasing).
We also have $[L^{N,i},L^{N,j}]_u =0$ for all $i\not=j$, all $u\in [0,1]$.
Furthermore, using that $v^2_{\tau^N_u}=u v^{2}_{t_N}$,
\begin{align*}
[L^{N,i},L^{N,i}]_u &= \frac{[M^{N,i},M^{N,i}]_{\tau^N_u}}{v^2_{t_N}}=\frac{[M^{N,i},M^{N,i}]_{\tau^N_u}}{v^2_{\tau^N_u}}\,u
\to \,u
\end{align*}
in probability.
Therefore, according to Jacod-Shiryaev \cite{js} (Theorem VIII-3.11), the process 
$(L^{N,1}_u,\dots,L^{N,\ell}_u)_{u\in [0,1]}$ 
converges in law to $(B^1_u,\dots,B^\ell_u)_{u\in[0,1]}$ where the $B^i$ are independent standard Brownian motions.
In particular, $(L^{N,1}_1,\dots,L^{N,\ell}_1)$ goes in law to $\cN(0,I_\ell)$.
To conclude the proof, it thus suffices to observe that $L^{N,i}_1= v_{t_N}^{-1}M^{N,i}_{\tau^N_1}=
v_{t_N}^{-1}M^{N,i}_{t_N}$.
\end{proof} 

We can now give the

\begin{preuve} {\it of Theorem \ref{PlimdlimT}.} 
In the present (linear) case, we can rewrite \eqref{sol moyenne} as 
$m_t=\intot (\mu + \int_0^s \varphi(s-u)dm_u)ds=\mu t + \intot \varphi(t-s)m_sds$ by Lemma \ref{tlt}.
This equation is studied in details in Lemma \ref{Plemlim}: recalling that 
$\Lambda=\int_0^\infty \varphi(s)ds<1$, we have $m'_t \sim a_0$ and $m_t \sim a_0 t$ as $t\to \infty$,
where $a_0=\mu/(1-\Lambda)$, which proves point (i).
The proof is now divided in several steps. Step 1 will also be used in the supercritical
case.

\vip

{\it Step 1.} 
Recall that, for some i.i.d. family $(\pi^i(ds \, dz))_{i\geq 1}$ of Poisson measures on 
$[0,\infty)\times[0,\infty)$ with intensity measure $dsdz$,
\begin{align*}
Z^{N,i}_t=\intot\int_0^\infty \indiq_{\big\{z \leq \mu + N^{-1}\sum_{j=1}^N\int_0^{s-}\varphi(s-u)dZ_u^{N,j}\big\}}\pi^i(ds\,dz).
\end{align*}
We have $\E[Z^{N,i}_t]=m_t$. Indeed, by exchangeability, we see that 
$\E[Z^{N,i}_t]=\E[Z^{N,1}_t]$ and that
$$
\E[Z^{N,1}_t]=\intot \Big(\mu+ N^{-1}\sum_{j=1}^N\int_0^s \varphi(s-u) d \E[Z^{N,j}_u] \Big)ds
=\intot \Big(\mu+ \int_0^s \varphi(s-u) d \E[Z^{N,1}_u] \Big)ds,
$$
whence $(\E[Z^{N,1}_t])_{t\geq 0}$ solves \eqref{sol moyenne}, of which the unique solution is $(m_t)_{t\geq 0}$
by Lemma \ref{expuni}.

\vip

We next introduce $U^{N,i}_t=Z^{N,i}_t-m_t$ and the martingales (here $\tilde \pi^i(ds \, dz)= \pi^i(ds \, dz)-dsdz$)
\begin{align*}
M^{N,i}_t=\intot\int_0^\infty \indiq_{\big\{z \leq \mu + N^{-1}\sum_{j=1}^N\int_0^{s-}\varphi(s-u)dZ_u^{N,j}\big\}}\tilde \pi^i(ds\,dz).
\end{align*}
We consider the mean processes $\overline{Z}^N_t=N^{-1}\sum_1^N Z^{N,i}_t$, $\overline{U}^N_t=N^{-1}\sum_1^N U^{N,i}_t$
and finally $\overline{M}^N_t=N^{-1}\sum_1^N M^{N,i}_t$. An easy computation using \eqref{sol moyenne} 
and Lemma \ref{tlt} shows that
\begin{align*}
U^{N,i}_t=&M^{N,i}_t+ \intot N^{-1}\sum_{j=1}^N\int_0^{s}\varphi(s-u)dZ_u^{N,j} ds - m_t\\
=&M^{N,i}_t+ \intot \int_0^{s}\varphi(s-u)\Big( N^{-1}\sum_{j=1}^N dZ_u^{N,j} - dm_u \Big) ds\\ 
=&M^{N,i}_t+ \intot \varphi(t-s) \Big( N^{-1}\sum_{j=1}^N Z_s^{N,j} - m_s \Big) ds,
\end{align*}
so that
\begin{align}\label{bol}
U^{N,i}_t= M^{N,i}_t+\intot \varphi(t-s) \overline{U}^N_s ds.
\end{align}
This directly implies that
\begin{align}\label{dair}
\overline{U}^N_t=\overline{M}^N_t+ \intot \varphi(t-s) \overline{U}^N_s ds.
\end{align}
Next, we observe that $[M^{N,i},M^{N,j}]_t=0$ for all $i\ne j$ (because these martingales a.s. never jump
simultaneously) and that $[M^{N,i},M^{N,i}]_t=Z^{N,i}_t$. Hence $[\overline{M}^N,\overline{M}^N]_t=N^{-1} 
\overline{Z}^N_t$. We thus have $\E[(M^{N,i}_t)^2]=\E[Z^{N,i}_t]=m_t$ and 
$\E[(\overline{M}^N_t)^2]=N^{-1}\E[\overline{Z}^N_t]=N^{-1}m_t$.

\vip

{\it Step 2.} Recalling \eqref{dair} and using that $\Lambda=\int_0^\infty \varphi(s)ds<1$,
we observe that $\sup_{[0,t]}|\overline{U}^N_s| \leq \sup_{[0,t]}|\overline{M}^N_s| + 
\Lambda \sup_{[0,t]}|\overline{U}^N_s|$. Consequently,
$$
\E\Big[\sup_{[0,t]}|\overline{U}^N_s| \Big] \leq (1-\Lambda)^{-1} \E\Big[\sup_{[0,t]}|\overline{M}^N_s|\Big ] 
\leq C N^{-1/2}m_t^{1/2}
$$
by the Doob and Cauchy-Schwarz inequalities. We easily deduce that
$$
\E\Big[\intot \varphi(t-s) |\overline{U}^N_s| ds\Big]\leq \Lambda \E\Big[\sup_{[0,t]}|\overline{U}^N_s| \Big] 
\leq C N^{-1/2}m_t^{1/2},
$$
whence finally, recalling \eqref{bol},
$$
m_t^{-1}\E[|U^{N,i}_t|] \leq m_t^{-1}\E[|M^{N,i}_t|] + Cm_t^{-1} N^{-1/2}m_t^{1/2} \leq Cm_t^{-1/2}.
$$
This says that $\E[|Z^{N,i}_t/m_t-1|]\leq C m_t^{-1/2}$ and thus proves point 2.

\vip

{\it Step 3.} We then fix $\ell\geq 1$ and use \eqref{bol} to write, for $i=1,\dots,\ell$,
$$
m_t^{1/2} (Z^{N,i}_t/m_t-1) = m_t^{-1/2}U^{N,i}_t=m_t^{-1/2} M^{N,i}_t
+ m_t^{-1/2} \intot \varphi(t-s) \overline{U}^N_s ds.
$$
First, $\E[m_t^{-1/2}\intot \varphi(t-s) |\overline{U}^N_s| ds]\leq C N^{-1/2}$, which tends to $0$ 
as $(t,N)\to(\infty,\infty)$, by the estimate proved in Step 2. To conclude the proof of point 3, we thus
only have to prove that $(m_t^{-1/2}M^{N,i}_t)_{i=1,\dots,\ell}$
goes in law to $\cN(0,I_\ell)$ as $(t,N)\to(\infty,\infty)$.
To this end, we apply Lemma \ref{jactass}.  The jumps of the martingales $M^{N,i}$ are uniformly bounded 
(by $1$) and we have seen that $[M^{N,i},M^{N,j}]_t=0$ for all $i\ne j$. The function $(m_t)_{t\geq 0}$ is continuous
and increases to infinity.
It thus suffices to check that, as $(t,N)\to (\infty,\infty)$, 
$m_t^{-1}[M^{N,i},M^{N,i}]_t\to 1$ in probability.
Since $[M^{N,i},M^{N,i}]_t=Z^{N,i}_t$, this is an immediate consequence of point 2.
\end{preuve}

We now turn to the supercritical case.

\begin{preuve} {\it of Theorem \ref{GlimdlimT}.} We rewrite \eqref{sol moyenne} as 
$m_t=\intot (\mu + \int_0^s \varphi(s-u)dm_u)ds=\mu t + \intot \varphi(t-s)m_sds$ by Lemma \ref{tlt}.
This equation is studied in details in Lemma \ref{Glemlim}: there is a unique $\alpha_0>0$ such that
$\cL_\varphi(\alpha_0)=1$ and, defining $a_0 \in(0,\infty)$ as in the statement, we have
$m_t \sim a_0 e^{\alpha_0 t}$ and $m'_t \sim a_0 \alpha_0 e^{\alpha_0 t}$ as $t\to \infty$, which proves point 1. 
We also know that
$\Gamma(t)=\sum_{n\geq 1} \varphi^{\star n}(t) \sim (a_0 \alpha_0^2/\mu) e^{\alpha_0 t}$, 
that $\Upsilon(t)=\intot \Gamma(s)ds \sim (a_0 \alpha_0/\mu) e^{\alpha_0 t}$ and further properties
of $m,m',\Gamma,\Upsilon$ are proved in Lemma \ref{Glemlim}.

\vip

{\it Step 1.} We adopt the same notation as in the proof of Theorem \ref{PlimdlimT}-Step 1, of which all 
the results remain valid in the present case. Point 1 follows from Lemma \ref{Glemlim}-(a).

\vip

{\it Step 2.} First, \eqref{dair} says exactly that 
$\overline{U}^N = \overline{M}^N +\varphi\star \overline{U}^N$. Using Lemma \ref{Glemlim}-(e), we deduce that 
$\overline{U}^N = \overline{M}^N + \Gamma \star \overline{M}^N$
(the processes $\overline{U}^N$ and $\overline{M}^N$
are clearly a.s. c\`adl\`ag and thus locally bounded).
Since $\E[(\overline{M}^N_t)^2]=N^{-1}m_t$ by Step 1,
\begin{align*}
\E[|\overline{U}^N_t|] \leq& \E[|\overline{M}^N_t|] + \intot \Gamma(t-s)\E[|\overline{M}^N_s|]ds\\
\leq& N^{-1/2}m_t^{1/2} + \intot \Gamma(t-s) N^{-1/2}m_s^{1/2}ds \\ 
\leq& C N^{-1/2}(1+m_t).
\end{align*}
The last inequality
easily follows from Lemma \ref{Glemlim}-(b).

\vip

Using \eqref{dair} again, we see that $\intot \varphi(t-s)\overline{U}^N_sds= \overline{U}^N_t - \overline{M}^N_t$,
whence
$$
\E\Big[\Big|\intot \varphi(t-s)\overline{U}^N_s \Big|\Big] \leq \E[|\overline{M}^N_t|] + \E[|\overline{U}^N_t|]
\leq N^{-1/2}m_t^{1/2}+ C N^{-1/2}(1+m_t)\leq C N^{-1/2}(1+m_t).
$$
On the other hand, we know from Step 1 that $\E[(M^{N,i}_t)^2]=m_t$. Using \eqref{bol}, we conclude that
$$
\E[|Z^{N,i}_t/m_t - 1|] 
=m_t^{-1}\E[|U^{N,i}_t|] \leq m_t^{-1/2} + CN^{-1/2}(1+m_t^{-1}).
$$
which ends the proof of 2. 

\vip

{\it Step 3.} We then fix $\ell\geq 1$ and write, for $i=1,\dots,\ell$, by \eqref{bol},
$$
(Z^{N,i}_t/m_t-1) = m_t^{-1}U^{N,i}_t=m_t^{-1}M^{N,i}_t + m_t^{-1}\intot \varphi(t-s) \overline{U}^N_s ds.
$$

{\it Step 3.1.} We first consider the regime $(t,N)\to(\infty,\infty)$ with $m_t/N\to 0$ and study
$$
m_t^{1/2}(Z^{N,i}_t/m_t-1) = m_t^{-1/2}M^{N,i}_t + m_t^{-1/2}\intot \varphi(t-s) \overline{U}^N_s ds.
$$
The second term tends to $0$ in probability, because we can bound, using Step 2, 
its $L^1$-norm by  $C m_t^{-1/2}N^{-1/2}(1+m_t)$, which tends to $0$ in the present regime. 
We thus just have to prove that $(m_t^{-1/2}M^{N,i}_t)_{i=1,\dots,\ell}$
goes in law to $\cN(0,I_\ell)$. We use Lemma \ref{jactass}:
the martingales $M^{N,i}$ have uniformly bounded (by $1$) jumps and
we have seen that $[M^{N,i},M^{N,j}]_t=0$ for $i\ne j$. The function $(m_t)_{t\geq 0}$ is continuous
and increases to infinity.
It only remains to check that $m_t^{-1}[M^{N,i},M^{N,i}]_t$ tends to $1$ in probability. But 
$[M^{N,i},M^{N,i}]_t=Z^{N,i}_t$, so that the conclusion follows from point 2.

\vip

{\it Step 3.2.} We finally consider the regime $(t,N)\to(\infty,\infty)$ with $m_t/N\to \infty$
and study
$$
N^{1/2}(Z^{N,i}_t/m_t-1) = N^{1/2}m_t^{-1}M^{N,i}_t + N^{1/2}m_t^{-1}\intot \varphi(t-s) \overline{U}^N_s ds.
$$
First, $N^{1/2}m_t^{-1}M^{N,i}_t \to 0$ in probability, because its $L^1$-norm is bounded by $N^{1/2}m_t^{-1/2}$
(recall that $\E[(M^{N,i}_t)^2]=m_t$),  which tends to $0$ in the present regime.
Since $V_t^N:=N^{1/2}m_t^{-1} \intot \varphi(t-s) \overline{U}^N_s ds$ does not depend on $i$, it only
remains to prove that $V_t^N$ goes in law to $\cN(0,\sigma^2)$.
We write, using \eqref{dair}, recalling that $\overline{U}^N = \overline{M}^N + \Gamma \star \overline{M}^N$
(see Step 2) 
and integrating by parts (recall that $\Upsilon(t)=\int_0^t \Gamma(s) ds$)
$$
V^N_t = N^{1/2}m_t^{-1} (\overline{U}^N_t - \overline{M}^N_t)=  N^{1/2}m_t^{-1} \intot \Gamma(t-s) \overline{M}^N_s ds
=  N^{1/2}m_t^{-1} \intot \Upsilon(t-s) d\overline{M}^N_s.
$$
Introduce $W_t^N=(\alpha_0/\mu) N^{1/2} \intot e^{-\alpha_0 s} d\overline{M}^N_s$ and observe that, since
$\E[[\overline{M}^N,\overline{M}^N]_t]=N^{-1}m_t$,
\begin{align*}
\E[(V^N_t-W^N_t)^2]=&  \E\Big[ N \intot (m_t^{-1}\Upsilon(t-s) - (\alpha_0/\mu) e^{-\alpha_0 s})^2 
d[\overline{M}^N,\overline{M}^N]_s \Big]\\
=& \intot (m_t^{-1}\Upsilon(t-s) - (\alpha_0/\mu) e^{-\alpha_0 s})^2 m'_sds.
\end{align*}
Lemma \ref{Glemlim}-(c) tells us that this tends to $0$ as $t\to \infty$.
We thus only have to prove that 
$W_t^N$ goes in law to $\cN(0,\sigma^2)$ as $(t,N)\to(\infty,\infty)$.

\vip

This follows again from Lemma \ref{jactass} (with $\ell=1$): 
the jumps of the martingale $(W_t^N)_{t\geq 0}$ are bounded by $(\alpha_0/\mu)N^{-1/2}$ (because those of  
$\overline{M}^N$ are bounded by $N^{-1}$). 
The function $v_t=(\alpha_0/\mu)(\intot e^{-2\alpha_0 s} m'_s ds)^{1/2}$ 
is continuous and increasing to the finite limit 
$v_\infty=\sigma$ (which was defined in the statement).
We thus only have to prove that (a) $v_t^{-2}[W^N,W^N]_t \to 1$ in probability as $(t,N)\to(\infty,\infty)$,
(b) for all $t_0>0$, uniformly in $t\geq t_0$, 
$v_t^{-2}[W^N,W^N]_t \to 1$ in probability as $N\to\infty$.
By Lemma \ref{jactass}, we will deduce that $v_t^{-1} W_t^N$ goes in law to $\cN(0,1)$ as  
$(t,N)\to(\infty,\infty)$, which of course implies that  $W_t^N$ goes in law to $\cN(0,\sigma^2)$ as desired.

\vip

We have, since $[\overline{M}^N,\overline{M}^N]_t=N^{-1} \overline{Z}^N_t$,
$$
[W^N,W^N]_t= (\alpha_0/\mu)^2 N \intot e^{-2\alpha_0 s} 
d[\overline{M}^N,\overline{M}^N]_s=(\alpha_0/\mu)^2 \intot e^{-2\alpha_0 s} d \overline{Z}^N_s.
$$
Using that $\overline{Z}^N_t=\overline{U}^N_t+m_t$ and performing an integration by parts, we see that
\begin{align*}
[W^N,W^N]_t=& v_t^2 + (\alpha_0/\mu)^2 \intot e^{-2\alpha_0 s} d \overline{U}^N_s \\
=& v_t^2 + (\alpha_0/\mu)^2 e^{-2\alpha_0 t}\overline{U}^N_t 
+2(\alpha_0^3/\mu^2)\intot e^{-2\alpha_0 s} \overline{U}^N_s ds.
\end{align*}
Recalling that that $\E[|\overline{U}^N_t|] \leq C N^{-1/2}(1+m_t)$, we infer
\begin{align*}
&\E\Big[\Big|(\alpha_0/\mu)^2 e^{-2\alpha_0 t}\overline{U}^N_t 
+2(\alpha_0^3/\mu^2)\intot e^{-2\alpha_0 s} \overline{U}^N_s ds \Big|\Big]\\
 \leq &
\frac C  {N^{1/2}} \Big(e^{-2\alpha_0 t}(1+m_t) +  \intot e^{-2\alpha_0 s} (1+m_s)ds \Big),
\end{align*}
which is bounded by $C N^{-1/2}$ by Lemma \ref{Glemlim}. We have proved that
$\sup_{t\geq 0} \E[ |[W^N,W^N]_t - v_t^2|] \leq C N^{-1/2}$, from which 
points (a) and (b) above immediately follow. The proof is complete.
\end{preuve}

\section{Nearest neighbour model}\label{nnm}

We consider here the case where $\mathbb G$ is a regular grid, on which particles interact
(directly) only if they are neighbours. We will work on $\zz^d$,
endowed with the set of edges 
$$
\cE=\{(i,j) \in (\zz^d)^2 \; :\; |i-j|=0 \hbox{ or } 1\},
$$
where $|(i_1,\dots,i_d)|=(\sum_{r=1}^di_r^2)^{1/2}$. Thus each point has $2d+1$ neighbours (including itself).
We hesitated to include self-interaction, but this avoids some needless complications
due to the periodicity of the underlying random walk on $\zz^d$.

\begin{assumption}\label{tic}
(i) The graph $\mathbb G=(\cS,\cE)$ is $\cS=\zz^d$ (for some $d\geq 1$)
endowed with the above set of edges $\cE$.

(ii) There is a nonnegative 
locally integrable function $\varphi:[0,\infty)\mapsto [0,\infty)$
such that for all $(j,i)\in \cE$, $\varphi_{ji}=(2d+1)^{-1}\varphi$.

(iii) For all $i\in\zz^d$, there is $\mu_i\geq 0$ such that $h_i(x)=\mu_i+x$.
The family $(\mu_i)_{i\in\zz^d}$ is bounded.
\end{assumption}

We next introduce a few notation. 
In the whole section, we call {\it vector} (and write in bold)
a family of numbers indexed by $\zz^d$. We call {\it matrix} a family indexed by $\zz^d\times\zz^d$.
The identity matrix $I$ is of course defined as $I(i,j)=\indiq_{\{i=j\}}$.
We will often use the product of a matrix and a vector.
The matrix $A=(A(i,j))_{i,j\in\zz^d}$ defined by
\begin{align}\label{dfA}
A(i,j)=(2d+1)^{-1}\indiq_{\{(i,j)\in\cE\}}
\end{align}
will play an important role. Since $A$ is a stochastic matrix, we can define,
for any $\Lambda \in (0,1)$,
\begin{align}\label{dfQ}
Q_\Lambda(i,j)=\sum_{n \geq 0} \Lambda^n A^n(i,j).
\end{align}

\subsection{Large-time behaviour}

Under Assumption \ref{tic}, we can use Theorem
\ref{existence uniqueness Y} (with $p_i=2^{-|i|}$, see Remark \ref{example}-(ii)): 
there is a unique Hawkes process
$(Z^i_t)_{i\in\zz^d,t\geq 0}$ with parameters $(\GG,\boldsymbol \varphi, \boldsymbol h)$
such that $\sum_{i\in\zz^d} 2^{-|i|}\E[Z^i_t]<\infty$.
Let us state the first results of this section. As usual,
we treat separately the subcritical and supercritical cases.

\begin{thm}\label{nn1}
Work under Assumption \ref{tic} and assume further that 
$\Lambda=\int_0^\infty \varphi(t)dt<1$. 
Consider the unique Hawkes process $(Z^i_t)_{i \in \zz^d,t\geq 0}$
with parameters $(\GG,\boldsymbol \varphi, \boldsymbol h)$.
For all $i\in\zz^d$, 
$t^{-1}Z^i_t$ goes in probability, as $t\to \infty$, to $\sum_{j \in \zz^d}Q_\Lambda(i,j)\mu_j$.
\end{thm}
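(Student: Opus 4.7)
The approach is to split $Z^i_t = m^i_t + U^i_t$ with $m^i_t := \E[Z^i_t]$ and $U^i_t := Z^i_t - m^i_t$, then to prove $t^{-1}m^i_t \to (Q_\Lambda \boldsymbol\mu)^i$ deterministically and $\E|U^i_t|/t \to 0$. Writing $\boldsymbol m_t = (m^i_t)_{i\in\zz^d}$, taking expectations in \eqref{equation basique} and using Lemma \ref{tlt} together with the definition \eqref{dfA} of $A$, one obtains the vector-valued renewal equation
$$m^i_t = \mu_i t + \int_0^t \varphi(t-s)(A\boldsymbol m_s)^i\,ds, \qquad i\in\zz^d.$$
A monotone-$\sup$ Gr\"onwall argument applied to the scalar consequence $\sup_i m^i_t \leq \|\boldsymbol \mu\|_\infty\, t + \int_0^t \varphi(t-s)\sup_i m^i_s\,ds$ yields the uniform bound $\sup_i m^i_t \leq Ct$.

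Iterating the renewal equation and using that $(\varphi A)^{\star n}(t) = \varphi^{\star n}(t)A^n$ gives
$$m^i_t = \mu_i t + \sum_{n\geq 1}(A^n\boldsymbol\mu)^i \int_0^t \varphi^{\star n}(t-s)\,s\,ds.$$
Dividing by $t$ and rewriting the inner integral as $\int_0^t (1-u/t)\varphi^{\star n}(u)\,du$, dominated convergence gives the limit $\int_0^\infty \varphi^{\star n}(u)\,du = \Lambda^n$. Since $A$ is stochastic one has $(A^n\boldsymbol\mu)^i \leq \|\boldsymbol\mu\|_\infty$ for all $n,i$, and since $\sum_n \Lambda^n < \infty$, a second application of dominated convergence (now in the sum over $n$) yields
$$t^{-1}m^i_t \longrightarrow \mu_i + \sum_{n\geq 1}\Lambda^n (A^n \boldsymbol\mu)^i = (Q_\Lambda\boldsymbol\mu)^i,$$
which is the deterministic half of the statement.

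For the fluctuations, the canonical martingales $M^i_t := Z^i_t - \int_0^t \lambda^i_s\,ds$ satisfy $[M^i,M^j]_t=0$ for $i\ne j$ and $[M^i,M^i]_t = Z^i_t$, hence $\E[(M^i_t)^2]=m^i_t\leq Ct$. Subtracting the equations satisfied by $Z^i$ and $m^i$ produces
$$U^i_t = M^i_t + \int_0^t \varphi(t-s)(A\boldsymbol U_s)^i\,ds.$$
Taking absolute values and expectations, using $\E|M^i_t|\leq \sqrt{m^i_t}\leq C\sqrt t$ and the row-stochasticity $\sum_j A(i,j)=1$, I obtain the scalar Volterra inequality
$$\bar f(t) \leq C\sqrt t + \int_0^t \varphi(t-s)\bar f(s)\,ds,$$
where $\bar f(t) := \sup_i \E|U^i_t|$. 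Since $\bar f(t) \leq 2\sup_i m^i_t \leq Ct < \infty$ a priori, the monotone-$\sup$ Gr\"onwall estimate (in the spirit of Lemma \ref{grrrr}) delivers $\bar f(t) \leq C\sqrt t/(1-\Lambda)$. Combining with Step 1, $\E|t^{-1}Z^i_t - (Q_\Lambda\boldsymbol\mu)^i| \leq t^{-1}\bar f(t) + |t^{-1}m^i_t - (Q_\Lambda\boldsymbol\mu)^i| \to 0$, which is $L^1$ and hence in-probability convergence. The delicate point is this reduction of the vector-valued fluctuation equation for $\boldsymbol U$ to a scalar Volterra inequality for $\bar f$: it is precisely the row-stochasticity of $A$ that lets the contraction propagate uniformly in $i$ in the $\ell^\infty$ norm, so that only a scalar subcritical Gr\"onwall argument is needed. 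One thereby avoids summing second moments over $j\in\zz^d$ or assuming any $L^2$-integrability of $\varphi$, both of which would be problematic under the sole hypothesis $\Lambda < 1$.
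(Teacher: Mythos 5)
Your overall strategy is the same as the paper's: split $Z^i_t=m^i_t+U^i_t$, derive the vector renewal equation for $\bm_t$ and the fluctuation equation $U^i_t=M^i_t+\intot\varphi(t-s)(A\bU_s)_i\,ds$, and use the stochasticity of $A$ to reduce everything to scalar subcritical Volterra inequalities; the only stylistic difference is that you identify the limit of $t^{-1}m^i_t$ by iterating the integral equation and applying dominated convergence, whereas the paper differentiates and uses the series formula for $\bm'_t$ together with monotone convergence. That variation is fine. The genuine problem is your very first quantitative step: you apply a sup-Gr\"onwall argument to $\sup_i m^i_t\le \|\bmu\|_\infty t+\intot\varphi(t-s)\sup_i m^i_s\,ds$ to conclude $\sup_i m^i_t\le Ct$. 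Gr\"onwall (Lemma~\ref{grrrr}-(i)) requires the function it is applied to to be locally bounded, and for this infinite system the local finiteness of $\sup_i m^i_t$ is not known a priori: Theorem~\ref{existence uniqueness Y} (with $p_i=2^{-|i|}$) only guarantees $\sum_{i\in\zz^d}2^{-|i|}m^i_t<\infty$, which is compatible with $m^i_t$ growing essentially like $2^{|i|}$, i.e.\ with $\sup_i m^i_t=\infty$, in which case your inequality reads $\infty\le\infty$ and yields nothing. The same issue hits your iteration of the renewal equation: the remainder after $N$ iterations is $\intot\varphi^{\star(N+1)}(t-s)(A^{N+1}\bm_s)_i\,ds$, and with only the weighted bound one can control $(A^{N+1}\bm_s)_i$ by $2^{N+1+|i|}\sum_j2^{-|j|}m^j_s$, so the remainder is only $O((2\Lambda)^N)$, which fails to vanish whenever $\Lambda\ge1/2$. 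So the uniform-in-$i$ bound is a real missing ingredient, not a formality; it is exactly the kind of ``no interaction from infinity'' statement the paper is careful about (cf.\ the Tychonov-type discussion after Theorem~\ref{existence uniqueness Y}).

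The paper closes this hole with Lemma~\ref{vectconv}: it constructs a solution of $\bm_t=\bmu t+\intot\varphi(t-s)A\bm_s\,ds$ explicitly through $\bm'_t=\big(I+\sum_{n\ge1}A^n\intot\varphi^{\star n}(s)ds\big)\bmu$, for which the uniform bound $\sup_i(m^i_t)'\le C\big(1+\sum_{n\ge1}\intot\varphi^{\star n}(s)ds\big)$ is immediate because $\bmu$ is bounded and $A^n$ is stochastic, and then identifies this solution with $(\E[Z^i_t])_{i\in\zz^d}$ by uniqueness in the weighted class $\sum_i2^{-|i|}\sup_{[0,t]}m^i_s<\infty$. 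You need this lemma (or an equivalent substitute, e.g.\ a monotone coupling with the translation-invariant system in which every $\mu_i$ is replaced by $\sup_j\mu_j$) before any of your Gr\"onwall steps. Once $\sup_i m^i_t\le Ct$ is secured, the rest of your argument goes through, with one cosmetic repair: $\bar f(t)=\sup_i\E[|U^i_t|]$ is not obviously non-decreasing, so to absorb $\intot\varphi(t-s)\bar f(s)ds\le\Lambda\bar f(t)$ you should work with the running supremum (as the paper does with $v_t=\sup_i\sup_{[0,t]}\E[|U^i_s|]$), which is harmless since the forcing term $C\sqrt t$ is non-decreasing.
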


\begin{thm}\label{nn2}
Work under Assumption \ref{tic} and assume further that 
$\Lambda=\int_0^\infty \varphi(t)dt \in (1,\infty]$ and that $t\mapsto \intot |d\varphi(s)|$
has at most polynomial growth. Consider $\alpha_0>0$ uniquely defined by $\cL_\varphi(\alpha_0)=1$.
Assume finally that the ``mean value''
\begin{align}\label{condimu}
\mu = \lim_{r\to\infty} \frac1{\#\{i\in\zz^d\; : \; |i|\leq r\}}   \sum_{|i|\leq r}\mu_i
\quad \hbox{ exists and is positive.}
\end{align}
Consider the unique Hawkes process $(Z^i_t)_{i \in \zz^d, t\geq 0}$
with parameters $(\GG,\boldsymbol \varphi, \boldsymbol h)$.
Then for all $i\in\zz^d$, $e^{-\alpha_0 t}Z^i_t$ goes in probability, as $t\to \infty$,
to $a_0=\mu \alpha_0^{-2}(\int_0^\infty t \varphi(t)e^{-\alpha_0 t}dt)^{-1}$.
\end{thm}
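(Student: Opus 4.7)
The plan is to follow the mean-field argument of Theorem \ref{GlimdlimT}, adapted to the spatial structure on $\zz^d$. The argument splits into (1) the asymptotics of $m^i_t=\E[Z^i_t]$ and (2) a concentration estimate for the fluctuation $U^i_t=Z^i_t-m^i_t$. For step (1), taking expectations in \eqref{equation basique} and applying Lemma \ref{tlt}, $m^i_t$ satisfies the linear integral system $m^i_t=\mu_i t+\sum_{j}A(i,j)(\varphi\star m^j)(t)$ with $A(i,j)=(2d+1)^{-1}\indiq_{\{(i,j)\in\cE\}}$ the transition matrix of the simple random walk with self-loops on $\zz^d$. Iterating Picard-style, and using $\varphi^{\star 0}=\delta_0$, yields
\[
m^i_t=\sum_{n\geq 0}\E_i[\mu_{X_n}]\,(\varphi^{\star n}\star\mathrm{id})(t),
\]
where $(X_n)_{n\geq 0}$ is the random walk driven by $A$ started at $i$.

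Since $\cL_\varphi(\alpha_0)=1$, the function $\phi(s)=\varphi(s)e^{-\alpha_0 s}$ is a probability density with mean $\nu=\int_0^\infty s\,\varphi(s)e^{-\alpha_0 s}\,ds$; with $\psi(s)=se^{-\alpha_0 s}$ and using $\varphi^{\star n}(s)=e^{\alpha_0 s}\phi^{\star n}(s)$, we rewrite $e^{-\alpha_0 t}m^i_t=\sum_{n\geq 0}\E_i[\mu_{X_n}](\phi^{\star n}\star\psi)(t)$. On the time side, the Key Renewal Theorem for the density $\phi$ gives $\sum_{n\geq 0}(\phi^{\star n}\star\psi)(t)\to\nu^{-1}\int_0^\infty\psi(r)\,dr=1/(\nu\alpha_0^2)$, while for any fixed $N$, $\sum_{n<N}(\phi^{\star n}\star\psi)(t)\to 0$ as $t\to\infty$ because $\phi^{\star n}$ concentrates near $n\nu$. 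On the spatial side, the local central limit theorem for the random walk $A$ shows that $A^n(i,\cdot)$ concentrates in a ball of radius $\sim\sqrt{n}$, and the Cesaro averaging hypothesis \eqref{condimu} then gives $\E_i[\mu_{X_n}]=\sum_j A^n(i,j)\mu_j\to\mu$ as $n\to\infty$. A splitting over $n<N$ and $n\geq N$ then yields $e^{-\alpha_0 t}m^i_t\to\mu/(\nu\alpha_0^2)=a_0$.

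For step (2), proceeding as in the derivation of \eqref{bol}, the Poisson S.D.E.\ gives $U^i_t=M^i_t+\sum_j A(i,j)(\varphi\star U^j)(t)$, where $M^i_t=Z^i_t-\int_0^t\lambda^i_s\,ds$ are pairwise orthogonal martingales (they never jump simultaneously) with $\E[(M^i_t)^2]=m^i_t$. Iterating, $U^i_t$ is expressed as a stochastic integral of the $dM^j_s$ against a matrix-valued kernel built from $A^n(i,j)$ and $\varphi^{\star n}$. It\^o's isometry for orthogonal martingales, combined with the uniform bound $(m^j)'(s)\leq Ce^{\alpha_0 s}$ (from $\mu_j$ bounded and a comparison with the homogeneous case of Lemma \ref{Glemlim}) and the LCLT return-probability bound $A^k(i,i)\leq Ck^{-d/2}$, should yield $\E[(U^i_t)^2]=o(e^{2\alpha_0 t})$. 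Cauchy-Schwarz then gives $\E[|Z^i_t/m^i_t-1|]\to 0$, which combined with step (1) concludes.

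The main obstacle is the variance bound in step (2): one must carefully balance the exponential growth $e^{\alpha_0 r}$ carried by $\varphi^{\star n}(r)=e^{\alpha_0 r}\phi^{\star n}(r)$ against the polynomial spatial return probability $A^{n+m}(i,i)\sim C(n+m)^{-d/2}$, a particularly delicate matter in low dimensions $d=1,2$ where the walk is recurrent and $\sum_k A^k(i,i)=\infty$. This is precisely where the polynomial-growth hypothesis on $\int_0^t|d\varphi|$ enters, paralleling its role in the proof of Lemma \ref{Glemlim}. The spatial-averaging step in (1) is more standard but still requires some care, since $\mu_j$ is merely bounded and not summable.
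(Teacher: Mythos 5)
Your overall architecture (first-moment asymptotics for $m^i_t=\E[Z^i_t]$ via the series $\bm_t'=(I+\sum_{n\geq 1}A^n\intot\varphi^{\star n}(s)ds)\bmu$, plus a fluctuation bound for $U^i_t=Z^i_t-m^i_t$ using the orthogonal martingales $M^i$) is the same as the paper's, and your step (1) is essentially correct in outline: the paper gets the same conclusion by comparing $(m^i_t)'$ with the derivative of the spatially homogeneous solution $m$ (whose asymptotics come from Feller's renewal theorem, Lemma \ref{Glemlim}), and by proving $(A^n\bmu)_i\to\mu$ via the local CLT (Lemma \ref{mass}-(ii)) — your Key Renewal Theorem route is a cosmetic variant, though the spatial-averaging step you dispatch in one sentence actually requires the full LCLT argument of Lemma \ref{mass}-(ii) since $\bmu$ is only Cesaro-averaged.

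The genuine gap is step (2), which you yourself flag as "the main obstacle" without resolving it — and the difficulty you identify is a red herring. Recurrence of the walk in $d=1,2$ is irrelevant: no summability over $n$ of return probabilities is ever needed. The paper closes the estimate in $L^1$, not $L^2$: from $U^i_t=M^i_t+\sum_{n\geq 1}\intot\varphi^{\star n}(t-s)(A^n\bM_s)_i\,ds$, one applies Cauchy--Schwarz \emph{spatially at each fixed time} $s$, getting $\E[|(A^n\bM_s)_i|]\leq(\sum_j(A^n(i,j))^2 m^j_s)^{1/2}\leq C(\e_n e^{\alpha_0 s})^{1/2}$ with $\e_n=\sum_j(A^n(i,j))^2\leq Cn^{-d/2}$, where only $\e_n\to 0$ is used; then the splitting lemma (Lemma \ref{Glemlim}-(d)) finishes: for $n\leq k$ each kernel $\varphi^{\star n}$ has only polynomial growth, so those terms are $o(e^{\alpha_0 t})$, while the tail $n>k$ is bounded by $\sup_{n>k}\e_n^{1/2}$ times $\intot\Gamma(t-s)e^{\alpha_0 s/2}ds\leq Ce^{\alpha_0 t}$. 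Equivalently, in your $L^2$ formulation the weight $e^{-\alpha_0 s}$ localizes the time integral to $t-s\sim t$, so only the single return probability at time $\sim t$ (which is $O(t^{-d/2})$) enters, never $\sum_k A^k(i,i)$; this is the missing mechanism your proposal does not supply. Finally, the polynomial-growth hypothesis on $t\mapsto\intot|d\varphi(s)|$ is \emph{not} what saves the variance bound: it is used to verify the bounded-variation hypotheses of Feller's renewal theorem, i.e.\ in the asymptotics of $\Gamma$, $\Upsilon$ and $m$ (Lemma \ref{Glemlim}-(a)), which belong to your step (1).
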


Let us comment on these results. In the subcritical case, the parameter $\bmu=(\mu_i)_{i \in\zz^d}$ is strongly 
present in the
limiting behaviour: the limit of $t^{-1}Z^i_t$ depends on a certain mean of $\bmu$ around the site $i$
and thus depends on $i$.
In the supercritical case, the behaviour is very different: the limit value of $e^{-\alpha_0 t}Z^i_t$
does not depend on $i$, and depends on $\bmu=(\mu_i)_{i \in\zz^d}$ only through a {\it global} mean value.
Observe also that for a finite-dimensional (e.g. {\it scalar}) Hawkes process, there is no law of large numbers:
one can get a limit of something like $e^{-\alpha_0 t}Z^i_t$, but the limit is random, see Zhu 
\cite[Section 5.4]{z3} (in particular Theorem 23 and Corollary 1). 
In that sense, we can say that in the supercritical case, 
the law of large number is reminiscent of the infinite dimension and of the interaction.

\vip

We will need a precise approximation for $A^n(i,j)$ where $A$ is defined by \eqref{dfA}. It is given by the 
local central limit theorem,  since $A$ is the transition matrix of an aperiodic symmetric random walk on 
$\zz^d$ with bounded jumps. Precisely, we infer from Lawler-Limic \cite[Theorem 2.1.1 and (2.5)]{ll} that 
there is a constant $C$ such that for
all $n\geq 1$, all $i \in \zz^d$,
\begin{align}\label{ltcl}
|A^n(0,i)-p_n(i)| \leq \frac C {n^{(d+2)/2}}
\end{align}
where, for $t>0$ and $x\in\rr^d$,
\begin{equation} \label{noyauGauss}
p_t(x)= \Big(\frac{2d+1}{4\pi t}\Big)^{d/2}\exp\Big(-\frac{(2d+1)|x|^2}{4 t}\Big).
\end{equation}
To apply \cite[Theorem 2.1.1]{ll}, we needed to compute the covariance matrix $\Gamma$ 
corresponding to our random walk, we found $\Gamma=2.(2d+1)^{-1} I_d$, $I_d$ being the $d\times d$
identity matrix.

\begin{lem}\label{mass}
Consider the matrix $(A(i,j))_{i,j\in\zz^d}$ defined by \eqref{dfA}.

\vip

(i) It holds that $\e_n=\sum_{j\in\zz^d} (A^n(i,j))^2$ does not depend on $i\in\zz^d$ and tends to $0$
as $n\to \infty$.

\vip

(ii) Let $\bmu=(\mu_i)_{i\in\zz^d}$ be bounded and satisfy \eqref{condimu}.
Then for all $i\in \zz^d$, $\lim_{n\to\infty} (A^n \bmu)_i =\mu$.
\end{lem}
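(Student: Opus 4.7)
For part (i), the plan is short. By translation invariance, $A(i,j)=(2d+1)^{-1}\indiq_{\{|i-j|\leq 1\}}$ depends only on $i-j$, so an induction gives $A^n(i,j)=A^n(0,j-i)$, and the substitution $k=j-i$ makes $\e_n=\sum_{k\in\zz^d}(A^n(0,k))^2$ visibly independent of $i$. Since $A$ is stochastic, $\e_n\leq (\sup_k A^n(0,k))\sum_k A^n(0,k)=\sup_k A^n(0,k)$, and combining the local CLT \eqref{ltcl} with the explicit form \eqref{noyauGauss} yields $\sup_k A^n(0,k)\leq p_n(0)+Cn^{-(d+2)/2}\leq Cn^{-d/2}\to 0$.

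For part (ii), my first move is a reduction to $i=0$. By translation invariance $(A^n\bmu)_i=\sum_k A^n(0,k)\mu_{i+k}$, and the shifted sequence $(\mu_{i+j})_{j\in\zz^d}$ is still bounded and still satisfies \eqref{condimu} with the same limit $\mu$, since the Cesaro averages over balls centered at $0$ and at $i$ differ by a boundary term of cardinality $O(r^{d-1})=o(r^d)$. Thus it suffices to prove $\sum_k A^n(0,k)\mu_k\to\mu$. The plan is a two-step approximation: first replace $A^n(0,\cdot)$ by the Gaussian $p_n$ via \eqref{ltcl}, then replace $p_n$ by a finite nonnegative combination of indicators of centered balls, to which \eqref{condimu} applies directly.

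For the first step, write $A^n(0,k)=p_n(k)+r_n(k)$ with $|r_n(k)|\leq Cn^{-(d+2)/2}$. Summing this uniform bound over $\zz^d$ would diverge, so I will split at $|k|=R\sqrt n$. The inner region contains $O((R\sqrt n)^d)$ terms and contributes $O(R^d/n)$; on the outer region, a Chebyshev estimate for the symmetric random walk of variance of order $n$ gives $\sum_{|k|>R\sqrt n}A^n(0,k)\leq C/R^2$, and the Gaussian tail handles $p_n$ analogously. Since $\bmu$ is bounded, this error is $O(R^d/n+1/R^2)$, which vanishes on letting $n\to\infty$ then $R\to\infty$. For the remaining sum $\sum_k p_n(k)\mu_k$, I decompose $\mu_k=\mu+\e_k$; the piece with $\mu$ gives $\mu\cdot \sum_k p_n(k)\to\mu$, a Riemann sum for $\int p_n=1$, and I reduce to showing that $T_n=\sum_k p_n(k)\e_k\to 0$, where $\e_k$ is bounded and satisfies $r^{-d}\sum_{|k|\leq r}\e_k\to 0$ by \eqref{condimu} and $\#\{|k|\leq r\}\sim c_d r^d$.

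The second step is the heart of the argument. After the rescaling $p_n(k)=n^{-d/2}q(k/\sqrt n)$ with $q(u)=((2d+1)/(4\pi))^{d/2}\exp(-(2d+1)|u|^2/4)$ the fixed probability density from \eqref{noyauGauss}, I obtain $T_n=n^{-d/2}\sum_k q(k/\sqrt n)\e_k$. Given $\eta>0$, since $q$ is radial, nonnegative, decreasing and integrable, the layer-cake formula $q=\int_0^\infty \indiq_{\{q>t\}}dt$, combined with the fact that each level set is a centered ball, allows me to approximate $q$ within $\eta$ in $L^1(\rr^d)$ by a finite nonnegative combination $\psi=\sum_\ell a_\ell \indiq_{B_{R_\ell}}$ of indicators of centered balls. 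The $L^1$-error contributes at most $2\|\bmu\|_\infty \cdot n^{-d/2}\sum_k|q-\psi|(k/\sqrt n)$, itself a Riemann sum converging to $2\|\bmu\|_\infty \int|q-\psi|\leq 2\|\bmu\|_\infty \eta$; each remaining piece satisfies $n^{-d/2}\sum_k \indiq_{B_{R_\ell}}(k/\sqrt n)\e_k=R_\ell^d\cdot (R_\ell\sqrt n)^{-d}\sum_{|k|\leq R_\ell\sqrt n}\e_k\to 0$ by \eqref{condimu}. Letting $\eta\to 0$ concludes. The main obstacle is precisely this bridge: the Cesaro assumption lives on balls centered at the origin while $p_n$ is a smooth Gaussian of scale $\sqrt n$, and the rescaling $k\mapsto k/\sqrt n$ together with the $L^1$-approximation of the radial decreasing $q$ by indicators of concentric balls is what matches the two.
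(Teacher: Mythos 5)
Your proof is correct, and part (i) is identical to the paper's. For part (ii) your overall skeleton is the same as the paper's (translation invariance, the local CLT bound \eqref{ltcl} to replace $A^n(0,\cdot)$ by the Gaussian $p_n$, and then exploiting the radial monotonicity of the Gaussian to reduce to averages over centered balls, which is exactly where \eqref{condimu} enters), but the execution differs in two genuine ways. First, you handle the dependence on $i$ by shifting the sequence $\bmu$ and checking that the shifted sequence still satisfies \eqref{condimu} (a boundary-annulus count of size $O(|i|r^{d-1})$), whereas the paper keeps $i$ and instead estimates $\sum_{|j|\le n^{1/2+1/4d}}|p_n(j-i)-p_n(j)|$ using the smoothness of $p_n$; your reduction is arguably cleaner and avoids that computation. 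Second, for the key step the paper performs an \emph{exact} discrete summation by parts over the radii $r_k$, writing $\sum_j \mu_j p_n(j)=\sum_k v(r_k)(p_n(r_k)-p_n(r_{k+1}))\cdot v(r_k)^{-1}\sum_{|j|\le r_k}\mu_j$ and then verifying the three conditions (Cesaro convergence, smallness of each weight, total mass tending to $1$); you instead use an \emph{approximate} layer-cake after rescaling, i.e. an $L^1(\rr^d)$-approximation of the radial decreasing density $q$ by a finite nonnegative combination of centered-ball indicators, with the error absorbed by boundedness of $\bmu$ and a Riemann-sum limit. The two mechanisms are close in spirit (Abel summation is the discrete layer-cake), but yours trades the paper's bookkeeping of conditions (a)--(c) for two Riemann-sum convergences ($\sum_k p_n(k)\to 1$ and $n^{-d/2}\sum_k|q-\psi|(k/\sqrt n)\to\int|q-\psi|$); if you write this up, add one line justifying these, namely that sphere boundaries are Lebesgue-null so the integrands are Riemann integrable on compacts, and that the Gaussian tail (or a second-moment bound, as in your $A^n$ versus $p_n$ comparison at radius $R\sqrt n$) controls the lattice sums outside a large ball. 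Also note, as you implicitly do, that $|\mu|\le\sup_i\mu_i$ so $\e_k=\mu_k-\mu$ is indeed bounded. With those minor elaborations your argument is complete and fully rigorous.
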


\begin{proof} 
In the following we denote by $C$ a constant depending only on $d$.

\vip

Point (i) is easy: since $A^n(i,j)=A^n(0,j-i)$ and since $A$ is stochastic, one has 
$\e_n=\sum_{j\in\zz^d} (A^n(0,j))^2 \le \sup_{j\in\zz^d} A^n(0,j)$. 
Moreover, by \eqref{ltcl}, $A^n(0,j)\le p_n(j)+ C n^{-(d+2)/2} \le C n^{-d/2}$. We conclude
that $\e_n \leq C n^{-d/2}\to 0$ as desired.

\vip

Now we turn to the proof of (ii). Let $i\in\zz^d$.
First we show that $\lim_n [(A^n \bmu)_i-\sum_{j\in\zz^d} p_n(j) \mu_j ]=0$.
Since $(A^n \bmu)_i=\sum_{j\in\zz^d} A^n(i,j)\mu_j =\sum_{j\in\zz^d} A^n(0, j-i)\mu_j$
and since the family $(\mu_j)_{j\in\zz^d}$ is bounded, it suffices to prove that
$v_n =\sum_{j\in \zz^d} | A^n(0,j-i)-p_n(j)|\to 0$. We write
$$ 
v_n \le \sum_{|j|\le n^{1/2+1/4d} } | A^n(0,j-i)-p_n(j) |
+ \sum_{|j|> n^{1/2+1/4d} } ( A^n(0,j-i)+p_n(j) )=v_n^1+v_n^2.
$$
On the one hand, using that $\sum_{j\in\zz^d} |j|^2 A^n(0,j) \le C n$ (the variance of the random walk at time $n$ is 
of order $n$), so that $\sum_{j\in\zz^d} |j|^2 A^n(0,j-i) = \sum_{k\in\zz^d} |i+k|^2 A^n(0,k) \le C (|i|^2+n)$ and
thus
$$ 
\sum_{|j|>n^{1/2+1/4d}} A^n(0,j-i) \le C {n^{-1-1/2d}} \sum_{j\in\zz^d} |j|^2 A^n(0,j-i) \le C n^{-1-1/2d}(|i|^2+n).
$$
Similarly, we have $\sum_{j\in\zz^d} |j|^2 p_n(j)\le C n$ and thus
$$ 
\sum_{|j|>n^{1/2+1/4d}} p_n(j) \le  {n^{-1-1/2d}} \sum_{j\in\zz^d} |j|^2 p_n(j)\le C n^{-1/2d}.
$$
Consequently $\lim_n v^2_n=0$. On the other hand,
$$
v^1_n\le \sum_{|j|\le n^{1/2+1/4d}}\! | A^n(0,j-i)-p_n(j-i) | + \sum_{|j|\le n^{1/2+1/4d}}\! | p_n(j-i)-p_n(j) |.
$$
From \eqref{ltcl}, the first sum is bounded by 
$C n^{-(d+2)/2} \#\{j\in\zz^d : |j|\le n^{1/2+1/4d} \}\leq C n^{-3/4} \to 0$. For the second sum, we use that, 
with $c_d=(2d+1)/4$,
$$ 
| p_n(j-i)-p_n(j) | = p_n(j) \Bigl| 1- \exp\bigl(-\frac{c_d}{n} |i|^2 
+ \frac{2c_d}{n} i.j\bigr)\Bigr|.
$$
Hence for $|j|\le n^{1/2+1/4d}$ and for $n$ large enough (e.g. so that $|i|n^{-1/2+1/4d} \leq 1$),
$$
| p_n(j-i)-p_n(j) | \le C p_n(j) (|i|^2 n^{-1}+ |i| n^{-1/2+1/4d}) \leq C p_n(j) (1+|i|^2) n^{-1/4}.
$$
Thus $\sum_{|j|\le n^{1/2+1/2d}} | p_n(j-i)-p_n(j) | \le C(1+|i|^2) n^{-1/4}$ and we deduce that  $\lim_n v^1_n=0$.

\vip

We have shown that $\lim v_n=0$. It only remains to check that
$\lim_n \sum_{j\in\zz^d} \mu_j p_n(j) = \mu$. Let $(r_k)_{k\ge 0}$ be the increasing sequence of nonnegative numbers 
such that $\{r_k\}_{k\ge 0}=\{|j| \; : \; j\in\zz^d\}$ and observe that
$$
\sum_{j\in\zz^d} \mu_j p_n(j)=\sum_{k\geq 0} p_n(r_k) \sum_{|j|=r_k} \mu_j.
$$
A discrete integration by parts shows that 
$$
\sum_{j\in\zz^d} \mu_j p_n(j)=\sum_{k\geq 0} (p_n(r_k)-p_n(r_{k+1})) \sum_{|j|\leq r_k} \mu_j
=\sum_{k\geq 0} v(r_k)(p_n(r_k)-p_n(r_{k+1})) \frac{1}{v(r_k)}\sum_{|j|\leq r_k} \mu_j,
$$
where $v(r)= \#\{j\in\zz^d\; : \; |j| \le r\}$. 
We easily conclude that $\lim_n \sum_{j\in\zz^d} \mu_j p_n(j) = \mu$ as desired, because

\vip

(a) $\lim_{k\to\infty} \frac{1}{v(r_k)} \sum_{|j|\le r_k} \mu_j = \mu$;

(b) for all $k\ge 0$ fixed, $\lim_n v(r_k)(p_n(r_k)-p_n(r_{k+1}))=0$;

(c) $\lim_{n \to \infty} \sum_{k =0}^{\infty} v(r_k)(p_n(r_k)-p_n(r_{k+1}))=1$.

\vip

Point (a) follows from our condition \eqref{condimu} on $\bmu$, point (b) is obvious (because 
$|v(r_k)(p_n(r_k)-p_n(r_{k+1}))| \leq  v(r_k) \sup_{i\in\zz^d} p_n(i)\leq Cv(r_k) n^{-d/2}\to 0$). To check (c), we
write  $\sum_{k =0}^{\infty} v(r_k)(p_n(r_k)-p_n(r_{k+1}))= \sum_{j \in \zz^d} p_n(j)
=\sum_{j\in\zz^d} A^n(0,j)+  \sum_{j \in \zz^d} [p_n(j)-A^n(0,j)]=1+\sum_{j \in \zz^d} [p_n(j)-A^n(0,j)]$.
This tends to $1$, because $\lim_n \sum_{j \in \zz^d} |p_n(j)-A^n(0,j)|=0$, as seen in the first part of the proof
(this is $v_n$ in the special case where $i=0$).

\end{proof}

Let us now give the

\begin{preuve} {\it of Theorem \ref{nn1}.}
We split the proof into several steps. We assume that there is at least one $i\in\zz^d$ such that
$\mu_i>0$, because else the result is obvious (because then $Z^i_t=0$ for all $i\in \zz^d$, all $t\geq 0$).
The first step will also be used in the supercritical case.

\vip

{\it Step 1.} We write as usual, for some i.i.d. family
$(\pi^i(ds \, dz))_{i\geq 1}$ of Poisson measures on $[0,\infty)\times[0,\infty)$ with intensity measure
$dsdz$,
\begin{align*}
Z^{i}_t=\intot\int_0^\infty \indiq_{\big\{z \leq \mu_i + (2d+1)^{-1}\sum_{j\rightarrow i} \int_0^{s-}\varphi(s-u)dZ_u^{j}\big\}}
\pi^i(ds\,dz).
\end{align*}
Let us put $m^i_t=\E[Z^i_t]$ and $\bm_t=(m^i_t)_{i\in\zz^d}$. A simple computation (using one more time
Lemma \ref{tlt}) gives us, for all $i\in\zz^d$,
$$
m^i_t=\mu_i t + \intot (2d+1)^{-1} \sum_{j\rightarrow i} \varphi(t-s) m^j_s ds.
$$
Using the vector formalism, this rewrites
$\bm_t=\bmu t + \intot \varphi(t-s) (A \bm_s) ds$. We furthermore know (from Theorem \ref{existence uniqueness Y})
that for all $t\geq 0$, $\sum_{i\in\zz^d} 2^{-|i|}m^i_t<\infty$.
Applying Lemma \ref{vectconv}, we see that $m^i$ is of class $C^1$ on $[0,\infty)$ for each $i\in\zz^d$, that
\begin{align}\label{jab1}
\bm_t'=\bmu + \intot \varphi(t-s) A \bm'_s ds
\end{align}
and that
\begin{align}\label{jab2}
\bm_t'= \Big(I + \sum_{n\geq 1} A^n \intot \varphi^{\star n}(s)ds \Big) \bmu.
\end{align}
Lemma \ref{vectconv} also tells us that $u_t=\sup_{i\in\zz^d} \sup_{[0,t]}(m^i_s)'$ 
is locally bounded, which of course implies that $\sup_{i\in\zz^d} \sup_{[0,t]} m^i_s$ is also locally bounded
(because $m^i_0=0$ for all $i\in\zz^d$), and that
\begin{align}\label{jab5}
u_t \leq C + \intot \varphi(t-s) u_s ds.
\end{align}
We introduce the martingales, for $i\in \zz^d$, (we use a tilde for compensation),
\begin{align*}
M^{i}_t=\intot\int_0^\infty \indiq_{\big\{z \leq \mu_i + (2d+1)^{-1}\sum_{j\rightarrow i} 
\int_0^{s-}\varphi(s-u)dZ_u^{j}\big\}}\tilde\pi^i(ds\,dz)
\end{align*}
and observe as usual that $[M^i,M^j]_t=0$ when $i\ne j$ 
(because these martingales a.s. never jump at the same time) while
$[M^i,M^i]_t=Z^i_t$. We finally introduce $U^i_t=Z^i_t-m^i_t$, the vectors  $\bU_t=(U^i_t)_{i\in\zz^d}$ and
$\bM_t=(M^i_t)_{i\in\zz^d}$ and observe that
\begin{align}\label{jab3}
\bU_t = \bM_t +  \intot \varphi(t-s) A\bU_s ds.
\end{align}
Indeed, for every $i\in\zz^d$, using Lemma \ref{tlt} and the equation satisfied by $m^i_t$, we find
\begin{align*}
U^i_t=&M^i_t+ \intot (2d+1)^{-1} \sum_{j\rightarrow i} \varphi(t-s) (Z^j_s-m^j_s) ds
= M^i_t + \intot \varphi(t-s) (A \bU_s)_i ds. 
\end{align*}
Equation \eqref{jab3} can be solved as usual as
\begin{align}\label{jab4}
\bU_t = \Big(\bM_t + \sum_{n\geq 1} \intot \varphi^{\star n}(t-s) A^n \bM_s ds \Big).
\end{align}
Finally, we easily check that $v_t=\sup_{i\in\zz^d}\sup_{[0,t]} \E[|U^i_s|]$ is locally bounded
(because $\E[|U^i_t|]\leq \E[|Z^i_t|]+m^i_t \leq 2m^i_t$) and
satisfies (start from \eqref{jab3}, use that $\E[|M^i_t|]\leq \E[[M^i,M^i]_t]^{1/2}=\E[Z^i_t]^{1/2}=(m^i_t)^{1/2}
\leq(\intot u_s ds)^{1/2}$) and that $A$ is stochastic)
\begin{align}\label{jab6}
v_t \leq \Big(\intot u_s ds\Big)^{1/2} + \intot \varphi(t-s)v_sds.
\end{align}

{\it Step 2.} Here we prove that there is a constant $C$ such that for all $i\in\zz^d$, $(m^i_t)'\leq C$
(and thus also $m^i_t \leq Ct$). This follows from 
\eqref{jab5}, which implies that $u_t\leq C + \Lambda u_t$, whence $u_t \leq C/(1-\Lambda)$.

\vip

{\it Step 3.} For all $i\in\zz^d$, $(m^i_t)' \sim (Q_\Lambda\bmu)_i$,
whence also $m^i_t \sim (Q_\Lambda\bmu)_it$, as $t\to \infty$.
Indeed, starting from \eqref{jab2}, using the monotone convergence theorem and that 
$\int_0^\infty \varphi^{\star n}(s)ds=(\int_0^\infty \varphi(s)ds)^n=\Lambda^n$,
$$
\lim_{t\to \infty} (m^i_t)'=\Big(\Big(I +\sum_{n\geq 1}  \Lambda^n A^n \Big)\bmu\Big)_i= (Q_\Lambda\bmu)_i.
$$

{\it Step 4.} There is a constant $C$ such that for all $i\in\zz^d$, all $t\geq 0$,
$\E[|U^i_t|]\leq C t^{1/2}$. Indeed, this follows from \eqref{jab6} and Step 2, which imply that
$v_t\leq Ct^{1/2}+\Lambda v_t$, whence $v_t \leq Ct^{1/2}/(1-\Lambda)$.

\vip

{\it Step 5.} The conclusion follows immediately, writing
$$
\E\Big[\Big| \frac{Z^i_t}{t} - (Q_\Lambda\bmu)_i\Big| \Big]\leq \E\Big[\Big| \frac{U^i_t}{t}\Big|\Big]
+ \Big| \frac{m^i_t}{t} - (Q_\Lambda\bmu)_i \Big|,
$$
which tends to $0$ as $t\to \infty$ by Steps 3 and 4.
\end{preuve}

We now turn to the supercritical case.

\begin{preuve} {\it of Theorem \ref{nn2}.} 
We consider $m$ (not to be confused with $\bm$)
the unique solution to $m_t=\mu t + \intot \varphi(t-s)m_s ds$, where
$\mu$ is the mean value defined by \eqref{condimu}. This equation is studied in details 
in Lemma \ref{Glemlim}: with $\alpha_0$ and $a_0$ defined in the statement, we have
$m_t \sim a_0 e^{\alpha_0 t}$ and $m'_t \sim a_0 \alpha_0 e^{\alpha_0 t}$ as $t\to \infty$,
as well as $\Gamma(t)=\sum_{n\geq 1} \varphi^{\star n}(t) \sim (a_0 \alpha_0^2/\mu) e^{\alpha_0 t}$
and $\Upsilon(t)=\intot \Gamma(s)ds \sim (a_0 \alpha_0/\mu) e^{\alpha_0 t}$.

\vip

{\it Step 1.}
We adopt the notation introduced in Step 1 of the proof of Theorem \ref{nn1}.

\vip
{\it Step 2.} 
Here we check that that there is $C$ such that for all $i\in\zz^d$,
$(m^i_t)' \leq C e^{\alpha_0t}$ (and thus $m^i_t \leq C e^{\alpha_0t}$).
This follows from \eqref{jab5}, which tells us that $u_t=\sup_{i\in\zz^d} (m^i_t)'$ is locally bounded
and satisfies $u_t \leq C + \intot \varphi(t-s)u_s ds$. Setting $h_t=u_t-\intot \varphi(t-s)u_s ds$,
we see that $h_t$ is locally bounded (from above and from below), because $u$ is locally bounded and $\varphi$
is locally integrable. We furthermore have $u=h+u\star \varphi$. Applying Lemma \ref{Glemlim}-(e),
we deduce that $u=h+h\star \Gamma$. But $h$ is bounded from above by $C$. Consequently,
$u \leq C + C\star \Gamma=C(1+\Upsilon)$, where $\Upsilon$ was defined in Lemma \ref{Glemlim}.
The conclusion follows from Lemma \ref{Glemlim}-(b).

\vip

{\it Step 3.} We now show that for all $i\in\zz^d$, $(m^i_t)'\sim (m_t)'$ (whence $m^i_t\sim m_t$) as $t\to \infty$.
Let us fix $i\in\zz^d$ and set $r^i_t=(m^i_t)'- (m_t)'$, which satisfies
$r_t^i = \mu_i -\mu  + \sum_{n\geq 1} ((A^n\bmu)_i-\mu) \intot \varphi^{\star n}(s)ds$. 
We know from Lemma \ref{mass}-(ii) that $\eta_n=(A^n\bmu)_i-\mu$ tends to $0$ as $n\to \infty$.
Consequently, Lemma \ref{Glemlim}-(d) tells us that $e^{-\alpha_0 t} 
\sum_{n\geq 1} ((A^n\bmu)_i-\mu) \intot \varphi^{\star n}(s)ds$ tends to $0$ as $t\to \infty$.
Hence $e^{-\alpha_0 t} r_t^i \to 0$ as $t\to \infty$. Using finally that $m'_t\sim a_0\alpha_0 e^{\alpha_0 t}$
as $t\to\infty$, we conclude that $(m^i_t)'/m_t'=1+r^i_t/m_t' \sim 1+ (a_0\alpha_0)^{-1} e^{-\alpha_0 t} r^i_t \to 1$
as desired.

\vip

{\it Step 4.} Here we check that for every $i\in\zz^d$, $e^{-\alpha_0t}\E[ |U^i_t|]$ tends to $0$ as $t\to\infty$.
We start from \eqref{jab4} to write
$$
|U^i_t| \leq |M^i_t| + \sum_{n\geq 1} \intot \varphi^{\star n}(t-s) |(A^n \bM_s)_i|ds.
$$
But $\E[|M^i_t|]\leq \E[Z^i_t]^{1/2}=(m^i_t)^{1/2} \leq C e^{\alpha_0t/2}$ by Step 2
and $\E[(A^n \bM_t)_i^2]=\sum_j (A^n(i,j))^2 m^j_s
\leq C e^{\alpha_0t} \sum_j (A^n(i,j))^2 \leq C e^{\alpha_0t} \e_n$ by Lemma \ref{mass}-(i), with $\e_n\to 0$ as 
$n\to \infty$. Consequently,
$$
e^{-\alpha_0t}\E[|U^i_t|]\leq C e^{-\alpha_0t/2} + C e^{-\alpha_0t}\sum_{n\geq 1} \e_n^{1/2} \intot \varphi^{\star n}(t-s) 
e^{\alpha_0s/2} ds.
$$
Lemma \ref{Glemlim}-(d) allows us to conclude.

\vip

{\it Step 5.} The conclusion follows, writing 
$$
\E\Big[\Big| \frac{Z^i_t}{a_0e^{\alpha_0t}} - 1\Big| \Big]\leq \E\Big[\Big| \frac{U^i_t}{a_0e^{\alpha_0t}}\Big|\Big] 
+ \Big| \frac{m^i_t}{a_0e^{\alpha_0t}} - 1\Big|,
$$
and using Steps 3, 4, and that $ m_t \sim a_0e^{\alpha_0 t}$ by Lemma \ref{Glemlim}-(a).
\end{preuve}

\subsection{Study of an impulsion}

Here we want to study how an {\it impulsion} at time $0$ at $i=0$ propagates.
To this end, we work under Assumption \ref{tic} with $\mu_i=0$ for all $i\in\zz^d$, 
but we assume that $Z^0$ has a jump at time $0$. 
Such a study is of course important: it allows us to measure, in some sense, the range of the interaction.

\vip

We first define precisely the process under study.

\begin{defi}\label{dfimp}
We work under Assumption \ref{tic}-(i)-(ii)
and consider a family $(\pi^i(ds \, dz),i\in\zz^d)$ 
of i.i.d. $(\cF_t)_{t\geq 0}$-Poisson measures on $[0,\infty) \times [0,\infty)$ with intensity measure $dsdz$.
We say that a family $(Z^i_t)_{i\in\zz^d,t\geq 0}$ of $(\cF_t)_{t\geq 0}$-adapted counting processes 
is an impulsion Hawkes process if
$$
\forall\; i \in \zz^d, \quad Z^i_t = \intot \int_0^\infty \indiq_{\displaystyle \big\{z 
\leq \sum_{j\rightarrow i} (2d+1)^{-1}[\int_0^{s-} \varphi(s-u)dZ^j_u +\varphi(s)\indiq_{\{j=0\}}]  \big\}}
\pi^i(ds\,dz).
$$
\end{defi}

As said previously, the term $\sum_{j\rightarrow i}\varphi(s)\indiq_{\{j=0\}}$ is interpreted as an {\it excitation}
due to a {\it forced}
jump of $Z^0$ at time $0$: simply rewrite it as $\indiq_{\{0\rightarrow i\}} \int_0^{s-} \varphi(s-u)\delta_u(ds)$.

\vip

The following proposition is easy.

\begin{prop}
Adopt the assumptions and notation of Definition \ref{dfimp}. There exists a pathwise
unique impulsion Hawkes process $((Z^i_t)_{i\in\zz^d,t\geq 0}$ 
such that $\sum_{i\in\zz^d} \E[Z^i_t]<\infty$ for all $t\geq 0$.
\end{prop}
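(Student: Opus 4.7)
\begin{preuve} (\textit{sketch}) The plan is to rerun the proof of Theorem \ref{existence uniqueness Y} with the uniform weights $p_i=1$. This is the natural choice here because $h_i(0)=0$ (since $\mu_i=0$), so condition (b) of Assumption \ref{basic h, varphi} is trivially satisfied for any choice of weights. With $c_i=1$ (from $h_i(x)=x$) and $p_i=1$, condition (c) holds with $\phi=\varphi$: for any $j\in\zz^d$,
$$\sum_{i:(j,i)\in\cE} c_i p_i |\varphi_{ji}(s)| = \sum_{i:(j,i)\in\cE}(2d+1)^{-1}\varphi(s)=\varphi(s)=p_j\varphi(s).$$
The only new feature compared with Theorem \ref{existence uniqueness Y} is the deterministic forcing term $(2d+1)^{-1}\varphi(s)\indiq_{\{0\to i\}}$ appearing inside the indicator; this acts as a time-inhomogeneous baseline intensity affecting only the $2d+1$ neighbours of the origin.

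For uniqueness, let $(Z^i)$ and $(\tZ^i)$ be two impulsion Hawkes processes with $\sum_i \E[Z^i_t]<\infty$ and $\sum_i \E[\tZ^i_t]<\infty$. The impulsion term does not depend on the solution and therefore cancels when one compares the indicators in Definition \ref{dfimp}. Setting $\delta^i_t=\E[\int_0^t |d(Z^i_s-\tZ^i_s)|]$ and reproducing verbatim the computation \eqref{Tassoeur}--\eqref{Tassmere} leads to
$$\delta^i_t\leq \intot\sum_{j\to i}(2d+1)^{-1}\varphi(t-s)\,\delta^j_s\,ds.$$
Summing over $i\in\zz^d$ and exchanging the two sums yields $\delta_t:=\sum_i \delta^i_t \leq \intot \varphi(t-s)\delta_s\,ds$, where $\delta_t$ is finite by assumption. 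Lemma \ref{grrrr}-(i) then gives $\delta_t\equiv 0$.

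For existence I would use the same Picard iteration as in the proof of Theorem \ref{existence uniqueness Y}: set $Z^{i,0}=0$ and define $Z^{i,n+1}$ by the same formula as in Definition \ref{dfimp} but with $Z^j$ replaced by $Z^{j,n}$ on the right-hand side. Using Lemma \ref{tlt} as in \eqref{tass2}, and the fact that $\sum_{i:0\to i}(2d+1)^{-1}=1$, one checks that $u^n_t=\sum_i \E[Z^{i,n}_t]$ satisfies
$$u^{n+1}_t\leq \intot \varphi(s)\,ds +\intot \varphi(t-s)u^n_s\,ds,$$
so $u^n$ is locally bounded for every $n$ by induction (since $\varphi$ is locally integrable). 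Since the impulsion term does not depend on the iteration index, it again cancels in the differences, and $\delta^n_t=\sum_i \E[\int_0^t |dZ^{i,n+1}_s-dZ^{i,n}_s|]$ obeys $\delta^{n+1}_t\leq \intot \varphi(t-s)\delta^n_s\,ds$ with $\delta^0_t\leq u^1_t$ locally bounded. Lemma \ref{grrrr}-(ii) yields $\sum_n \delta^n_T<\infty$ for every $T>0$, so the Picard sequence is Cauchy in the appropriate sense and passes to the limit in \eqref{equation basique} (suitably adapted); Lemma \ref{grrrr}-(iii) finally provides $\sup_n u^n_t<\infty$, hence the desired bound $\sum_i \E[Z^i_t]<\infty$ for the limit.

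The only place where care is required is the verification that the extra deterministic forcing term is truly harmless: it is supported on the $2d+1$ neighbours of $0$ and, after summation over $i$, contributes the locally bounded quantity $\intot \varphi(s)\,ds$ to the a priori bound, which is easily absorbed by the same Gr\"onwall-type lemma used throughout Section \ref{wp}. There is no substantial additional obstacle beyond this bookkeeping.
\end{preuve}
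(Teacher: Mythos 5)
Your proposal is correct and follows essentially the same route as the paper: both adapt the proof of Theorem \ref{existence uniqueness Y} with unit weights, noting that the deterministic impulsion term cancels in the differences (so uniqueness reduces to $\delta_t\leq\intot\varphi(t-s)\delta_s\,ds$ after summing over the $2d+1$ neighbours and applying Lemma \ref{grrrr}-(i)), and both obtain existence by the same Picard iteration with the a priori bound $m_t=\intot\varphi(s)ds+\intot\varphi(t-s)m_s\,ds$ controlled by the Gr\"onwall-type lemma. Your write-up is, if anything, slightly more careful than the paper's sketch in justifying local boundedness along the iterates via Lemma \ref{grrrr}-(iii).
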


\begin{proof} The proof resembles much that of Theorem \ref{existence uniqueness Y}, so we only sketch it.
We start with uniqueness and thus consider two impulsion Hawkes processes $(Z^i_t)_{i\in\zz^d,t\geq 0}$
and $(\tZ^i_t)_{i\in\zz^d,t\geq 0}$ such that $\sum_{i\in\zz^d} \E[Z^i_t+\tZ^i_t]<\infty$.
We set $\Delta^i_t=\intot |d(Z^i_s-\tZ^i_s)|$, $\delta^i_t=\E[\Delta^i_t]$ 
and $\delta_t=\sum_{i\in\zz^d} \delta^i_t$ (which is locally bounded by assumption). 
We may check that $\delta^i_t \leq (2d+1)^{-1}\intot \sum_{j\rightarrow i} \varphi(t-s)\delta^j_s ds$ exactly as in 
the proof 
of Theorem \ref{existence uniqueness Y}.
Summing in $i$ and recalling that each site has $2d+1$ neighbours, we find
$\delta_t \leq \intot \varphi(t-s)\delta_s ds$. Lemma \ref{grrrr}-(i) tells us that $\delta_t=0$
for all $t$, whence pathwise uniqueness.

\vip

Existence follows from a Picard iteration. Let us only check an {\it a priori} estimate 
implying that  $\sum_{i\in\zz^d} \E[Z^i_t]<\infty$ for all $t\geq 0$. Set $m^i_t=\E[Z^i_t]$ 
and $m_t=\sum_{i\in\zz^d} m^i_t$. A direct computation using Lemma \ref{tlt}
shows that $m_t^i = (2d+1)^{-1}\sum_{j\rightarrow i} [\intot \varphi(t-s) m^j_s ds + \indiq_{\{j=0\}}\intot\varphi(s)ds]$.
Summing in $i$, we find $m_t = \intot \varphi(t-s) m_s ds + \intot\varphi(s)ds$.
Using Lemma \ref{grrrr}-(i), that $\varphi$ is locally integrable (and that $\intot\varphi(s)ds$ is locally
bounded), we deduce that $\sup_{[0,T]} m_t \leq C(T,\varphi)$ as desired.
\end{proof}

We next compute the probability of the extinction event. Point 1 is a noticeable
property that makes the result very easy and precise.

\begin{thm}\label{ex} Adopt the assumptions and notation of Definition \ref{dfimp} and consider the 
impulsion Hawkes process $((Z^i_t)_{i,\in\zz^d,t\geq 0}$. 

\vip

1. The process $Z_t=\sum_{i\in\zz}Z^i_t$ is a scalar impulsion Hawkes process with excitation
function $\varphi$. In other words, $(Z_t)_{t\geq 0}$ is a counting process with compensator
$A_t=\intot \lambda_s ds$, where
$$
\lambda_t=\varphi(t) + \int_0^{t-} \varphi(t-s)dZ_s.
$$

\vip

2. We introduce the extinction event defined by $\Omega_e=\{\lim_{t\to \infty} \sum_{i\in\zz^d}Z^i_t<\infty\}$.
Setting $\Lambda = \int_0^\infty \varphi(s)ds$, we have
(i) $\Pr(\Omega_e)=0$ if $\Lambda=\infty$;
(ii) $\Pr(\Omega_e)=\exp(-\gamma_\Lambda \Lambda)$  if $\Lambda \in (1,\infty)$, where
$\gamma_\Lambda\in (0,1)$ is characterised by $\gamma_\Lambda \Lambda+\log(1-\gamma_\Lambda)=0$;
(iii) $\Pr(\Omega_e)=1$ if $\Lambda\in(0,1]$.
\end{thm}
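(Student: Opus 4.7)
The plan is to reduce point 2 to the classical extinction problem for a Galton-Watson branching process, using point 1 to collapse the infinite system to a scalar impulsion Hawkes process.

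\emph{For point 1:} Since the Poisson measures $\pi^i$ are independent, the counting processes $(Z^i)_{i \in \zz^d}$ a.s.\ never jump simultaneously, so $Z_t = \sum_{i \in \zz^d} Z^i_t$ is a counting process. Because $\sum_i \E[Z^i_t] < \infty$, the orthogonal local martingales $M^i_t = Z^i_t - \int_0^t \lambda^i_s ds$ sum in $L^2$ to give $M_t = Z_t - \int_0^t \sum_i \lambda^i_s ds$, a square-integrable martingale. Hence $Z$ has intensity $\sum_i \lambda^i_t$. Swapping the order of summation and using that $\cE$ is symmetric with $\#\{i : j \to i\} = 2d+1$ for every $j$,
\begin{align*}
\sum_{i \in \zz^d}\lambda^i_t &= \sum_{j\in\zz^d}(2d+1)^{-1}\#\{i : j \to i\}\Big[\int_0^{t-}\varphi(t-u)dZ^j_u + \varphi(t)\indiq_{\{j=0\}}\Big] \\
&= \sum_{j\in\zz^d}\int_0^{t-}\varphi(t-u)dZ^j_u + \varphi(t) = \varphi(t) + \int_0^{t-}\varphi(t-u)dZ_u,
\end{align*}
which is the asserted intensity.

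\emph{For point 2:} By point 1, $Z$ is the scalar Hawkes process triggered by a single forced event at $t=0$ with excitation $\varphi$. I would invoke the Hawkes-Oakes cluster representation \cite{ho}, adapted to a single forced ancestor in place of a Poisson immigrant stream: $Z$ coincides in law with the total progeny of a branching process whose ancestor produces children at the atoms of an inhomogeneous Poisson process of intensity $\varphi$ on $(0,\infty)$, each child $s$ independently producing its own children at intensity $\varphi(\cdot - s)$, and so on. The number of children of any point is thus Poisson with mean $\Lambda$. Linear Hawkes processes are nonexplosive ($\E[Z_t] < \infty$ for every $t$), so $\Omega_e = \{Z_\infty < \infty\}$ coincides exactly with extinction of this Galton-Watson tree. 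Classical theory then yields: $\Pr(\Omega_e)=0$ when $\Lambda=\infty$ (infinitely many direct children a.s.); $\Pr(\Omega_e)=1$ when $\Lambda \in (0,1]$ (subcritical/critical Galton-Watson dies out); and when $\Lambda\in(1,\infty)$ the extinction probability $q\in(0,1)$ is the unique solution in $[0,1)$ of the fixed point equation $q=e^{-\Lambda(1-q)}$. Setting $\gamma_\Lambda = 1-q$ rewrites this as $\gamma_\Lambda\Lambda + \log(1-\gamma_\Lambda) = 0$, and $\Pr(\Omega_e) = q = e^{-\gamma_\Lambda\Lambda}$.

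The main hurdle is rigorously justifying the cluster representation within our Poisson-driven SDE framework. The cleanest route is to build a solution to the scalar impulsion SDE directly from the branching picture (using compatible Poisson measures constructed from the random tree) and identify it with $Z$ via the pathwise uniqueness of the impulsion Hawkes process established just above. The remaining verifications --- that the offspring distribution is $\mathrm{Poisson}(\Lambda)$ and that $\Omega_e$ matches the tree's extinction event (using nonexplosion) --- are then routine, and combine with the classical Galton-Watson facts above to deliver the three cases.
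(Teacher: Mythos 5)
Your proof is correct, but it takes a genuinely different route from the paper's on point 2 (point 1 is handled the same way, by summing the intensities and using that every $j$ has $2d+1$ out-neighbours). You reduce extinction to the Hawkes--Oakes cluster representation and then quote classical Galton--Watson theory for a $\mathrm{Poisson}(\Lambda)$ offspring law; the paper explicitly acknowledges this branching picture as folklore but deliberately avoids it, giving instead a short self-contained martingale argument: for $\gamma\in(0,1)$ it forms the Dol\'eans-Dade exponential of $-\gamma(Z-A)$, observes that it is a bounded martingale whenever $\gamma\Lambda+\log(1-\gamma)\le 0$, and deduces the full Laplace transform $\E[e^{-xZ_\infty}]=e^{-\Lambda\gamma(x)}$ where $\gamma(x)\Lambda+\log(1-\gamma(x))=-x$; letting $x\to 0+$ yields all three cases at once (with $\Lambda=\infty$ handled separately via $A_\infty\ge\int_0^\infty\varphi=\infty$). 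What the paper's route buys is precisely that it sidesteps the one step you flag as the main hurdle — a rigorous construction of the cluster representation inside the Poisson-SDE framework — and it delivers more information (the law of $Z_\infty$, not just $\Pr(Z_\infty<\infty)$). Your route is viable and conceptually transparent, and the machinery you would need is essentially what the paper carries out later in Step 6 of the proof of Theorem \ref{propa}: there the one-generation decomposition $\tZ_t=N_t+\sum_{k\le N_\infty}\tZ^k_{t-T_k}$ is built and identified with $Z$ by computing its compensator, which is exactly how one derives the fixed point $p=e^{-\Lambda(1-p)}$. One small correction to your sketch: since the branching construction lives on a different probability space, the identification with $Z$ is in law (via the compensator computation and uniqueness in law for the scalar impulsion equation), not via pathwise uniqueness as you state; and to get the full tree rather than one generation you must iterate the decomposition, which is where the nonexplosion bound $\E[Z_t]<\infty$ that you invoke does the work.
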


Of course, we can sometimes use this theorem, by a simple comparison argument, if $\varphi_{ji}$ depends on $i,j$.
For example, to guarantee non-extinction with probability one, it suffices that all the
$\varphi_{ji}$ are bounded below by some $(2d+1)^{-1}\varphi$ such that $\int_0^\infty \varphi(s)ds=\infty$.

\begin{proof}
Point 1 is immediate: the compensator of the counting process $(Z_t)_{t\geq 0}$ is
\begin{align*}
A_t=& 
\intot \sum_{i\in\zz^d} \sum_{j\rightarrow i} (2d+1)^{-1}[\int_0^{s-} \varphi(s-u)dZ^j_u +\varphi(s)\indiq_{\{j=0\}}] ds\\
=&\intot\sum_{j\in\zz^d} [\int_0^{s-} \varphi(s-u)dZ^j_u +\varphi(s)\indiq_{\{j=0\}}] ds \\
=& \intot [\varphi(s) + \int_0^{s-} \varphi(s-u)dZ_u]ds.
\end{align*}

We next prove point 2. It is well-known Folklore that a scalar impulsion
Hawkes process can be related to a Poisson Galton-Watson process with Poisson$(\Lambda)$
reproduction law, but we give a direct proof for the sake of completeness.
If $\Lambda=\infty$, it suffices to note that
$\Pr(\Omega_e)=\Pr(Z_\infty < \infty)= \Pr(A_\infty<\infty) \leq \Pr(\int_0^\infty \varphi(s)ds<\infty)=0$.
When $\Lambda<\infty$, we introduce the martingale, for $\gamma\in (0,1)$,
$N^\gamma_t= - \gamma (Z_t- A_t)= - \gamma Z_t +
\gamma\int_0^t \varphi(s)ds +\gamma \int_0^t \varphi(t-s) Z_s ds$ by Lemma \ref{tlt}. 
We denote by $M^\gamma_t=\cE(N^\gamma)_t=e^{\gamma A_t} \prod_{s\leq t} (1-\gamma \Delta Z_s)$ 
its Dol\'eans-Dade exponential, see Jacod-Shiryaev \cite[Chapter 1, Section 4f]{js}. Since
$Z$ is a counting process, we see that
$$
M^\gamma_t=\exp\Bigl(\gamma\int_0^t \varphi(s)ds +\gamma\int_0^t \varphi(t-s) Z_s ds + \log(1-\gamma)Z_t \Bigr).
$$
If $\gamma \Lambda + \log(1-\gamma)\le 0$ then $M^\gamma$ is bounded
(because $\gamma\int_0^t \varphi(s)ds +\gamma\int_0^t \varphi(t-s) Z_s ds + \log(1-\gamma)Z_t
\leq \gamma \Lambda + (\gamma\Lambda+\log(1-\gamma))Z_t \leq \gamma \Lambda$), and thus converges
in $L^1$. Consequently, $\E[M^\gamma_\infty]=1$ and (one easily verifies, using that
$Z$ is non-decreasing, that $\lim_{t\to \infty} 
\int_0^t \varphi(t-s) Z_s ds=\Lambda Z_\infty$)
$$
\E\Bigl[\exp\Bigl(\gamma \Lambda + \bigl(\log(1-\gamma)+\gamma\Lambda\bigr)Z_\infty\Bigr)\Bigr]=1.
$$
But for all $x>0$, there is a unique $\gamma(x) \in (0,1)$ such that
$\gamma(x)\Lambda+\log(1-\gamma(x))=-x$, whence
$$
\E\Bigl[\exp\Bigl(-x Z_\infty\Bigr)\Bigr]= \exp(-\Lambda \gamma(x)).
$$
Consequently,
$$
\Pr(\Omega_e) = \Pr(Z_\infty<\infty) = \lim_{x \to 0+} \E\Bigl[\exp\Bigl(-x Z_\infty\Bigr)\Bigr]
= \exp(-\Lambda \gamma(0+)).
$$
If $\Lambda \in (0,1]$, we see that $\gamma(0+)=0$, so that $\Pr(\Omega_e)=1$. If now
$\Lambda>1$, $\gamma(0+)$ is the unique solution in $(0,1)$ to 
$\gamma(0+)\Lambda+\log(1-\gamma(0+))=0$.
\end{proof}

We next study more deeply, in the super-critical case, how the impulsion propagates.
Unfortunately, the computations are really tedious: we decided to restrict ourselves to a
particular case ($\varphi$ is an exponential function) 
where some computations are explicit. We believe that the result below
can be extended to a general class of functions $\varphi$, but a difficult technical lemma
is required.

\begin{thm}\label{propa}
Work under Assumption \ref{tic}-(i)-(ii), with $\varphi(t)=ae^{-bt}$, for some $a>b>0$.
Consider the impulsion Hawkes process $((Z^i_t)_{i\in\zz^d,t\geq 0}$. Since $\int_0^\infty \varphi(s)ds=a/b>1$,
we know from Theorem \ref{ex} that $\Pr(\Omega_e)\in(0,1)$.
We set $\alpha_0=a-b$ (for which $\cL_\varphi(\alpha_0)=1$) and we recall that the Gaussian density 
$p_t(x)$ is defined by \eqref{noyauGauss}.

\vip

(i) There are some constants $C>0$ and $t_0>0$ and a random variable $H\geq 0$
such that for all $i \in \zz^d$, all $t\geq t_0$,
$$
\E\big[\big| Z^i_t - H p_{at}(i) e^{\alpha_0 t}  \big| \big] \leq \frac{Ce^{\alpha_0 t}}{t^{d/2+1/3}}.
$$

(ii) For all $x \in\rr^d$,  $t^{d/2}e^{-\alpha_0 t} Z^{\lfloor x  t^{1/2} \rfloor}_t \to  H p_a(x)$
in probability as $t\to\infty$. Here, we define the ``integer part'' $\lfloor y \rfloor$ of 
$y=(y_1,\dots,y_d)\in\rr^d$, by
$\lfloor y \rfloor=(\lfloor y_1 \rfloor,\dots,\lfloor y_d \rfloor)\in\zz^d$.

\vip

(iii) The random variable $H$ is positive on the event $\Omega_e^c$.

\end{thm}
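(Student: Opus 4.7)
The plan exploits that for $\varphi(t) = ae^{-bt}$, (a) the pair $(Z_t, \lambda_t)$ is Markov so $H$ arises from an $L^2$-bounded martingale, and (b) the impulsion process is a branching random walk (BRW) on $\zz^d$ with neighbour-uniform step kernel $A$ from Section \ref{nnm}; a typical particle at time $t$ has generation $\approx at$, whose position profile is $p_{at}(\cdot)$ by the local CLT \eqref{ltcl}. First I would construct $H$: writing $\lambda_t$ for the intensity of $Z_t = \sum_i Z^i_t$ (Theorem \ref{ex}, point 1), the exponential form gives $d\lambda_t = -b\lambda_t dt + a\,dZ_t = \alpha_0\lambda_t dt + a\,dN_t$ with $N_t = Z_t - \int_0^t\lambda_s ds$ the compensated martingale, so $L_t := e^{-\alpha_0 t}\lambda_t$ is a martingale with $\E[[L,L]_t] = a^3\int_0^t e^{-\alpha_0 s}ds$ uniformly bounded; it converges in $L^2$ to $L_\infty$. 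Setting $H := L_\infty/\alpha_0$ and integrating $aZ_t = \lambda_t - a + b\int_0^t\lambda_s ds$ yields $\E[(e^{-\alpha_0 t}Z_t - H)^2]^{1/2} \leq Ce^{-\alpha_0 t/2}$.

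Next, condition on the $\sigma$-field $\mathcal T$ generated by the tree (birth times and genealogy, without positions). Given $\mathcal T$, the position of each particle $u$ is a length-$|u|$ random walk of law $A^{|u|}(0, \cdot)$, so $\E[Z^i_t\mid\mathcal T] = \sum_n W^n_t A^n(0, i)$ with $W^n_t = \#\{u:|u| = n, \tau_u\leq t\}$. Split
\[
Z^i_t - Hp_{at}(i)e^{\alpha_0 t} = (Z^i_t - \E[Z^i_t\mid\mathcal T]) + (\E[Z^i_t\mid\mathcal T] - p_{at}(i)Z_t) + p_{at}(i)(Z_t - He^{\alpha_0 t}).
\]
The third term has $L^1$ norm $O(p_{at}(i)e^{\alpha_0 t/2}) = O(e^{\alpha_0 t/2}/t^{d/2})$ by Step 1, exponentially subdominant. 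The first term is controlled by a conditional variance computation: organizing pair covariances by the most recent common ancestor generation $k$, using $\mathrm{Cov}(\indiq_{X_u=i},\indiq_{X_v=i}\mid\mathcal T) = \mathrm{Cov}_Y(A^{m-k}(Y,i), A^{n-k}(Y,i))$ ($Y \sim A^k(0,\cdot)$, $m=|u|$, $n=|v|$), the local-CLT identity $p_s(z)^2 = C_d s^{-d/2}p_{s/2}(z)$, and the fact that pairs with MRCA at gen $k$ have expected count $\sim e^{\alpha_0(2t - k/a)}$ in a BRW of rate $\alpha_0$, one obtains variance $\lesssim e^{2\alpha_0 t}/t^{d+1}$, hence an $L^1$ bound $\lesssim e^{\alpha_0 t}/t^{(d+1)/2}$, subdominant to the target rate.

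The middle term is the bottleneck and fixes the exponent. On a window $|n-at|\leq K$, the local CLT and Taylor expansion of $p_n(i)$ in $n$ give $|A^n(0,i) - p_{at}(i)| \leq Cp_{at}(i)|n-at|/(at) + Cn^{-(d+2)/2}$. The explicit formula $\E[W^n_t] = \int_0^t a^n s^{n-1}e^{-bs}/(n-1)!\,ds$ is log-concave in $n$, concentrated Gaussianly around its maximum at $n = at$ with width $\sqrt{at}$ and total mass $\sim e^{\alpha_0 t}$ (by a saddle-point computation). Choosing $K = t^{2/3}$, the in-window error is bounded by $p_{at}(i)K\E[Z_t]/(at) \lesssim e^{\alpha_0 t}/t^{d/2+1/3}$, while the out-of-window tail $\lesssim e^{\alpha_0 t}\exp(-K^2/at) = e^{\alpha_0 t}\exp(-t^{1/3}/a)$ is super-polynomially small, giving (i). Point (ii) follows from (i) since $p_{at}(\lfloor x\sqrt t\rfloor) = t^{-d/2}p_a(x)(1+o(1))$, via Markov's inequality. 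For (iii), $H\geq 0$ with $H = 0$ on $\Omega_e$ (as $Z_\infty < \infty$ there); since $L$ is $L^2$-bounded with $L_0 = a$, $\E[H] = a/\alpha_0 > 0$ so $\Pr(H = 0)<1$, and the branching renewal property $H = \sum_{|u|=n}e^{-\alpha_0\tau_u}H'_u$ (with iid copies $H'_u$ of $H$) gives $\Pr(H = 0\mid Y_n) = \Pr(H=0)^{Y_n}$, where $Y_n$ is the embedded Galton-Watson count at generation $n$; combined with $Y_n\to\infty$ on $\Omega_e^c$ this forces $\Pr(H = 0, \Omega_e^c) = 0$. The main technical challenges are the variance analysis (careful handling of sibling correlations in the BRW via most recent common ancestors) and the saddle-point concentration of $\E[W^n_t]$ around $n = at$.
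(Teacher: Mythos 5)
Your proposal is essentially correct in outline, but it takes a genuinely different route from the paper, and the two differences are worth spelling out. The paper stays entirely inside the Poisson--S.D.E. representation: it writes $Z^i_t=m^i_t+M^i_t+W^i_t$ with $W^i_t=\sum_{j}\sum_{n\geq1}\int_0^t\varphi^{\star n}(t-s)A^n(i,j)M^j_s\,ds$, sets $X=\sum_j\int_0^\infty e^{-\alpha_0 s}M^j_s\,ds$ and $H=\alpha_0^{-1}+X$, and compares $W^i_t$ with $p_{at}(i)e^{\alpha_0 t}X$ through successive $L^2$ estimates based on the orthogonality of the $M^j$; the rate $t^{-d/2-1/3}$ comes from a purely deterministic lemma comparing $\Gamma(i,t)=\sum_n A^n(0,i)\varphi^{\star n}(t)$ and its time integral with $p_{at}(i)e^{\alpha_0 t}$, proved exactly as in your ``middle term'': Poisson concentration of the generation index around $at$ on a window of width $t^{2/3}$, the local CLT \eqref{ltcl}, and the $O(t^{-d/2-1})$ sensitivity of $p_s(i)$ in $s$ (note that $\E[W^n_t]=\int_0^t\varphi^{\star n}(s)\,ds$, so your triangle-inequality bound on $\E[Z^i_t\mid\mathcal T]-p_{at}(i)Z_t$ is literally the paper's Lemma \ref{prelim}-(iv)). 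Your construction of $H$ through the martingale $e^{-\alpha_0 t}\lambda_t$ (using the Markov property special to the exponential kernel) is a legitimate alternative giving the same a.s.\ limit of $e^{-\alpha_0 t}Z_t$, and your proof of (iii) via the embedded generation-$n$ Galton--Watson counts $Y_n\to\infty$ is equivalent to the paper's one-generation fixed-point identification $p=e^{-\Lambda(1-p)}=\Pr(\Omega_e)$. What is genuinely different is the control of the spatial fluctuations: you condition on the genealogy and run a most-recent-common-ancestor second-moment analysis, where the paper uses the martingale decomposition; your target bound $\E[\mathrm{Var}(Z^i_t\mid\mathcal T)]\lesssim e^{2\alpha_0 t}/t^{d+1}$ is of the correct order and is indeed subdominant since $(d+1)/2>d/2+1/3$.

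Two ingredients, however, are asserted rather than proved, and they carry most of the technical weight. First, the spatial cluster/branching-random-walk representation (given the genealogy, displacements are i.i.d.\ uniform steps on edges, so particle positions are conditionally random walks sharing steps up to the MRCA) must be justified in law against Definition \ref{dfimp}; the paper only establishes the scalar, one-generation version of this (Step 6 of its proof), and every term of your decomposition relies on the full spatial version. Second, the MRCA estimate is precisely where a careless execution fails: for pairs whose MRCA lies in the first generations, the crude bound $\E_Y[A^{m-k}(Y,i)A^{n-k}(Y,i)]\lesssim t^{-d}$, multiplied by the $\sim e^{2\alpha_0 t}$ such pairs, gives a variance of order $e^{2\alpha_0 t}/t^{d}$, i.e.\ fluctuations of the same order $e^{\alpha_0 t}t^{-d/2}$ as $Z^i_t$ itself, and the argument collapses. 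You must therefore keep the covariance and prove a quantitative cancellation, e.g.\ $\mathrm{Cov}_Y\lesssim k\,t^{-d-1}$ for $k\ll at$ (a gradient-in-space estimate on $p$, or on $A^n$ via \eqref{ltcl}), combined with the bound $(at-k)^{-d/2}t^{-d/2}$ for late MRCA; and you need the pair-count estimate $e^{\alpha_0(2t-k/a)}$, i.e.\ a second-moment bound for the cluster localized at the MRCA's generation/birth time (obtainable from $\E[Z_t^2]\le Ce^{2\alpha_0 t}$, which your $L^2$ martingale provides, but it has to be written out), all uniformly in $i$. Finally, a cosmetic slip: $|p_n(i)-p_{at}(i)|\le C\,p_{at}(i)|n-at|/(at)$ is not correct uniformly in $i$, since $\partial_s p_s(i)$ carries an extra $(1+|i|^2/s)$ factor; the uniform bound $C\,t^{-d/2-1}|n-at|$ (as in the paper) is what you should use, and it suffices. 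With these points filled in, your scheme does deliver (i)--(iii) with the stated rate.
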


This result describes quite precisely how an impulsion propagates. 
Conditional on non extinction,
and the process $(Z^i_t)_{i\in\zz^d}$ resembles a Gaussian profile, with height $t^{-d/2} H e^{\alpha_0 t}$
and radius $\sqrt t$, for some positive random variable $H$.

\vip

Compared to the previous result (Theorem \ref{nn2}), the growth is only very slightly slower: 
a single impulsion at the site $0$ produces a growth in $t^{-d/2}e^{\alpha_0 t}$, while 
we have $e^{\alpha_0 t}$ when all the sites are regularly excited (as is e.g. the case when $\mu_i=1$ for all $i$).

\vip

Finally, it is important to note that, even if the growth ``near 0'' of the process is very fast (exponential),
the spatial propagation is quite slow (of order $\sqrt t$).

\vip

We start with some preliminary computations.

\begin{lem}\label{prelim}
Adopt the notation and assumptions of Theorem \ref{propa}. Introduce also, for $i\in\zz^d$ and $t\geq 0$, 
$\Gamma(i,t)=\sum_{n\geq 1} A^n(0,i) \varphi^{\star n} (t)$.

\vip

(i) For all $n\geq 1$, all $t\geq 0$, $\varphi^{\star n}(t) = (at)^{n-1} e^{-bt} / (n-1)!$. 

\vip

(ii) For all $t\geq 0$, $\sum_{i\in\zz^d}\Gamma(i,t) = e^{\alpha_0 t}$.

\vip

(iii) There is $C$ such that for all $t\geq 0$, $\sum_{i\in\zz^d}|i|^2\Gamma(i,t) \leq C (1+t) e^{\alpha_0 t}$.

\vip

(iv) There are some constants $C$ and $t_0>0$ such that for all $t\geq t_0$, all $i\in\zz^d$,
\begin{align}
&\big|\Gamma(i,t)-p_{a t}(i)e^{\alpha_0 t} \big| \leq  \frac{Ce^{\alpha_0 t}}{t^{d/2+1/3}}, \label{ss1}\\
&\big|\intot\Gamma(i,s)ds - \frac{1}{\alpha_0}p_{a t}(i)e^{\alpha_0 t} \big|\leq  \frac{Ce^{\alpha_0 t}}{t^{d/2+1/3}}
\label{ss2}
\end{align}
\end{lem}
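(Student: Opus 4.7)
\textbf{Proof plan for Lemma \ref{prelim}.} For (i), induct on $n$. The base case $n=1$ is direct; the inductive step is
\[
\varphi^{\star(n+1)}(t)=\int_0^t ae^{-b(t-s)}\frac{(as)^{n-1}e^{-bs}}{(n-1)!}\,ds=\frac{a^n e^{-bt}}{(n-1)!}\int_0^t s^{n-1}\,ds=\frac{(at)^n e^{-bt}}{n!}.
\]
For (ii), $\sum_i A^n(0,i)=1$ for every $n\ge 1$ since $A$ is stochastic, so (i) yields $\sum_{i\in\zz^d}\Gamma(i,t)=\sum_{n\ge 1}(at)^{n-1}e^{-bt}/(n-1)!=e^{\alpha_0 t}$. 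For (iii), the random walk with kernel $A$ has mean-zero step $Y$ uniform on $\{0,\pm e_1,\dots,\pm e_d\}$, so $\sum_i|i|^2 A^n(0,i)=\E|S_n|^2=n\E|Y|^2=2dn/(2d+1)$. Using $\sum_{n\ge 1}nx^{n-1}/(n-1)!=(1+x)e^x$, we get $\sum_i|i|^2\Gamma(i,t)=\frac{2d}{2d+1}(1+at)e^{\alpha_0 t}\le C(1+t)e^{\alpha_0 t}$.

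The substance is (iv). The key observation is a probabilistic representation: after substituting $m=n-1$,
\[
\Gamma(i,t)=e^{-bt}\sum_{m\ge 0}A^{m+1}(0,i)\frac{(at)^m}{m!}=e^{\alpha_0 t}\sum_{m\ge 0}\Pr(N=m)\,A^{m+1}(0,i)=e^{\alpha_0 t}\Pr(S_{N+1}=i),
\]
where $N\sim\text{Poisson}(at)$ is independent of a random walk $(S_n)_{n\ge 0}$ started at $0$ with transition matrix $A$. Consequently \eqref{ss1} reduces to the local CLT bound $|\Pr(S_{N+1}=i)-p_{at}(i)|\le C t^{-d/2-1/3}$.

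To prove this, decompose
\[
\Pr(S_{N+1}=i)-p_{at}(i)=\E\bigl[A^{N+1}(0,i)-p_{N+1}(i)\bigr]+\bigl(\E p_{N+1}(i)-p_{at}(i)\bigr).
\]
The first term is handled via \eqref{ltcl}. Split according to $\{N\ge at/2\}$ and $\{N<at/2\}$: on the former the summand is $\le C(N+1)^{-(d+2)/2}\le Ct^{-(d+2)/2}$; on the latter, Poisson large-deviation bounds give $\Pr(N<at/2)\le Ce^{-cat}$, while the integrand is uniformly bounded. The second term is treated by second-order Taylor expansion of $\tau\mapsto p_\tau(i)$ around $\tau=at$, based on the explicit formula $\partial_\tau p_\tau(x)=\bigl[-d/(2\tau)+(2d+1)|x|^2/(4\tau^2)\bigr]p_\tau(x)$ (whose modulus is $\le C p_\tau(x)/\tau$ in the bulk $|x|^2\le C\tau$, with a comparable control for $\partial_\tau^2$), together with $\E(N+1-at)=1$ and $\E(N+1-at)^2=at+1$. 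The tail regime $|i|^2\ge Ct\log t$ is handled separately via matching Gaussian-type tail bounds on both $p_{at}(i)$ and $\Pr(S_{N+1}=i)$. This yields an error of order $t^{-(d+2)/2}$, even stronger than the required $t^{-d/2-1/3}$.

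For \eqref{ss2}, split
\[
\intot\Gamma(i,s)\,ds-\alpha_0^{-1}p_{at}(i)e^{\alpha_0 t}=\intot\bigl[\Gamma(i,s)-p_{as}(i)e^{\alpha_0 s}\bigr]ds+\Bigl(\intot p_{as}(i)e^{\alpha_0 s}\,ds-\alpha_0^{-1}p_{at}(i)e^{\alpha_0 t}\Bigr).
\]
The first integral is bounded using \eqref{ss1} together with the Laplace-type estimate $\intot e^{\alpha_0 s}s^{-d/2-1/3}\,ds\le C\alpha_0^{-1}e^{\alpha_0 t}t^{-d/2-1/3}$ (the integrand is concentrated near $s=t$ by the exponential weight). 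The second piece is evaluated by integrating by parts on $[1,t]$:
\[
\int_1^t p_{as}(i)e^{\alpha_0 s}\,ds=\alpha_0^{-1}p_{at}(i)e^{\alpha_0 t}-\alpha_0^{-1}p_a(i)e^{\alpha_0}-\alpha_0^{-1}\int_1^t(\partial_s p_{as}(i))e^{\alpha_0 s}\,ds,
\]
and the bound $|\partial_s p_{as}(i)|\le Cp_{as}(i)/s$ produces a correction of order $e^{\alpha_0 t}t^{-d/2-1}$; the bounded contribution from $s\in[0,1]$ is absorbed for $t\ge t_0$. The main technical obstacle throughout is the uniform local CLT in (iv)---getting the bound to hold uniformly across both the bulk regime $|i|\le C\sqrt{t}$ and the Gaussian tail---while (i)--(iii) and the passage from \eqref{ss1} to \eqref{ss2} are relatively routine.
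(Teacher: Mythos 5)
Your (i)--(iii) and your treatment of \eqref{ss1} are correct and built from the same ingredients as the paper's proof: the paper also Poissonizes (the weights $e^{-\alpha_0 t}\varphi^{\star n}(t)$ are exactly the Poisson$(at)$ probabilities of $n-1$), uses the local CLT bound \eqref{ltcl} and the Poisson concentration inequality \eqref{conpoi}. Where you genuinely diverge is in comparing the Gaussian at time $n$ with the Gaussian at time $at$: the paper restricts to the window $|(n-1)-at|\leq (at)^{2/3}$ and uses only the first-order bound $|\partial_t p_t(x)|\leq Ct^{-d/2-1}$, which is precisely what produces its $t^{-d/2-1/3}$ rate, whereas your second-order Taylor expansion with the exact Poisson moments $\E(N+1-at)=1$, $\E(N+1-at)^2=at+1$ yields the sharper $t^{-d/2-1}$, more than enough for \eqref{ss1}. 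Two small remarks on this part: in the Lagrange remainder the intermediate time $\xi$ lies between $N+1$ and $at$ and can be of order $1$ when $N$ is small, so you must also restrict the remainder estimate to $\{N\geq at/2\}$ and dispose of the complement by the same Poisson tail bound you already invoke (routine); and your restriction of the derivative bounds to the ``bulk'' is unnecessary, since $|\partial_\tau p_\tau(x)|\leq C\tau^{-d/2-1}$ and $|\partial^2_\tau p_\tau(x)|\leq C\tau^{-d/2-2}$ hold uniformly in $x$ (powers of $|x|^2/\tau$ times the Gaussian are bounded), which removes any need for a separate tail regime --- a good thing, because a matching Gaussian upper bound on $\Pr(S_{N+1}=i)$ in the far tail is not contained in \eqref{ltcl} and would require an extra argument.

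The one step that fails as written is your decomposition for \eqref{ss2}. You add and subtract $\int_0^t p_{as}(i)e^{\alpha_0 s}ds$, but for $i=0$ one has $p_{as}(0)=c(as)^{-d/2}$, so this integral diverges at $s=0$ for every $d\geq 2$: both pieces of your splitting are then infinite, the claimed ``Laplace-type estimate'' $\int_0^t e^{\alpha_0 s}s^{-d/2-1/3}ds\leq C\alpha_0^{-1}e^{\alpha_0 t}t^{-d/2-1/3}$ is false with lower limit $0$, and in any case \eqref{ss1} is only available for $s\geq t_0$, so it cannot be integrated down to $s=0$. The repair is easy and consistent with what you already do in the integration by parts: cut the time integral at a fixed time (or at $t-t^{1/2}$, as the paper does), bound $\int_0^{t_0}\Gamma(i,s)ds\leq\int_0^{t_0}e^{\alpha_0 s}ds\leq C$ directly from (ii), and only compare $\Gamma(i,s)$ with $p_{as}(i)e^{\alpha_0 s}$ on $[t_0,t]$, where your Laplace argument (splitting at $t/2$) and your integration by parts, with the uniform bound $|\partial_s p_{as}(i)|\leq Cs^{-d/2-1}$, do give the stated $Ce^{\alpha_0 t}t^{-d/2-1/3}$. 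With these fixes your argument is complete and in fact slightly sharper than the paper's.
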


\begin{proof}
Point (i) is well-known and can be checked recursively. 
Using that $A^n$ is a stochastic matrix, we see that $\sum_{i\in\zz^d}\Gamma(i,t)=\sum_{n\geq 1} \varphi^{\star n} (t)$.
Hence (ii) follows from (i).

\vip

Next, we recall that $A$ is the transition matrix of a symmetric random walk on $\zz^d$ (with bounded jumps),
so that there is a constant $C$ such that for all $n\geq 0$,
$\sum_{i\in\zz^d}|i|^2 A^n(0,i) \leq C n$ (its variance at time $n$ is of order $n$). Consequently,
$$
\sum_{i\in\zz^d} |i|^2 \Gamma(i,t) \leq C \sum_{n\geq 1} n \varphi^{\star n} (t)= 
C \sum_{n\geq 1} \frac{n (at)^{n-1} e^{-bt}}{(n-1)!} = C\big[ \sum_{n\geq 1} \frac{ (at)^{n-1} e^{-bt}}{(n-1)!}
+   \sum_{n\geq 2} \frac{ (at)^{n-1} e^{-bt}}{(n-2)!}  \big].
$$
This is easily computed: it gives $Ce^{\alpha_0 t}[1+at]$.

\vip

Point (iv) is more complicated. First, we need the Gaussian approximation of $A^n(0,i)$ given by \eqref{ltcl}.
We will also need the following result, which can be found e.g. in \cite[Lemma 9-(d)]{fg}
(plus the fact that for $x\in (0,1)$, $(x+1)\log(x+1)-x \geq x^2/4$): 
for any $\lambda>0$, for $X$ a Poisson$(\lambda)$-distributed random variable, for any $x\in (0,1)$,
\begin{align}\label{conpoi}
\Pr(|X-\lambda|\geq \lambda x ) \leq 2\exp(-\lambda x^2/4).
\end{align}
We now turn to our problem. Observing that $\sum_{n\geq 1} \varphi^{\star n} (t) = e^{\alpha_0 t}$, we write
\begin{align*}
\Delta(i,t)=\big|\Gamma(i,t)-p_{at}(i)e^{\alpha_0 t} \big| = \Big|\sum_{n\geq 1} \varphi^{\star n} (t) 
(A^n(0,i) -p_{at}(i)) \Big|.
\end{align*}
We now assume that $t$ is large enough so that $p_{t/a}(i)\leq 1$ for all $i$ and we write
\begin{align*}
\Delta(i,t)\leq& \sum_{n\geq 1} \varphi^{\star n} (t) \indiq_{\{|(n-1)-at|>(at)^{2/3}  \}} + 
\sum_{n\geq 1} \varphi^{\star n} (t)\indiq_{\{|(n-1)-at|\leq (at)^{2/3}  \}} \big|A^n(0,i) -p_{n}(i) \big|\\
&+ \sum_{n\geq 1} \varphi^{\star n} (t)\indiq_{\{|(n-1)-at|\leq (at)^{2/3}  \}} \big|p_n(i) -p_{at}(i) \big|\\
=&\Delta_1(i,t)+\Delta_2(i,t)+\Delta_3(i,t).
\end{align*}
First, using point (i) and \eqref{conpoi} (with $\lambda=at$ and $x=(at)^{-1/3}$), 
we see that (if $t$ is large enough
so that $at>1$)
\begin{align*}
\Delta_1(i,t)\leq& e^{\alpha_0 t}\sum_{n\geq 1} e^{-at}\frac{(at)^{n-1}}{(n-1)!} \indiq_{\{|(n-1)-at|>(at)^{2/3}  \}}
\leq 2 e^{\alpha_0 t} e^{-(at)^{1/3}/4 }.
\end{align*}
We next use \eqref{ltcl} and assume that $t$ is large enough so that $|(n-1)-at|\leq (at)^{2/3}$
implies $n \geq at/2$:
\begin{align*}
\Delta_2(i,t)\leq& C\sum_{n\geq 1} n^{-(d+2)/2} \varphi^{\star n} (t)\indiq_{\{|(n-1)-at|\leq (at)^{2/3}  \}}
\leq C(at)^{-(d+2)/2}\sum_{n\geq 1} \varphi^{\star n} (t) \leq C t^{-(d+2)/2} e^{\alpha_0 t}.
\end{align*}
Finally, we observe that $|\partial_t p_t(x)| \leq C t^{-d/2-1}$, so that,
if $t$ is sufficiently large,  $|(n-1)-at|\leq (at)^{2/3}$ implies $|p_n(i) -p_{at}(i)| \leq C t^{-d/2-1/3}$.
Consequently, 
$$
\Delta_3(i,t) \leq C  t^{-d/2-1/3}\sum_{n\geq 1} \varphi^{\star n} (t) \leq C  t^{-d/2-1/3}e^{\alpha_0 t}.
$$
We have proved that there are $C$ and $t_0$ such that for all $t\geq t_0$, all $i\in\zz^d$,
$$
\big|\Gamma(i,t)-p_{at}(i)e^{\alpha_0 t} \big|\leq  2 e^{\alpha_0 t} e^{-(at)^{1/3}/4 }
+C t^{-(d+2)/2} e^{\alpha_0 t}+C  t^{-d/2-1/3}e^{\alpha_0 t}
\leq C  t^{-d/2-1/3}e^{\alpha_0 t},
$$
which is \eqref{ss1}. 

\vip

It remains to deduce \eqref{ss2} from \eqref{ss1}.
We write
\begin{align*}
\delta(t,i)=& \Big|\intot \Gamma(i,s)ds -   p_{at}(i)e^{\alpha_0 t} /\alpha_0 \Big|\\
\leq & \int_0^{t-t^{1/2}}  \Gamma(i,s)ds + \int_{t-t^{1/2}}^t \big| \Gamma(i,s)- p_{as}(i)e^{\alpha_0 s}  \big| ds\\
&+  \int_{t-t^{1/2}}^t \big|p_{as}(i)e^{\alpha_0 s} - p_{at}(i)e^{\alpha_0 s} \big| ds
+ p_{at}(i)\big| \int_{t-t^{1/2}}^t e^{\alpha_0 s} ds -  e^{\alpha_0 t}/\alpha_0\big|\\
=& \delta_1(t,i)+ \delta_2(t,i)+ \delta_3(t,i)+ \delta_4(t,i).
\end{align*}
First, point (ii) implies that 
$$
\delta_1(t,i) \leq \int_0^{t-t^{1/2}} e^{\alpha_0 s}ds \leq C e^{\alpha_0(t-t^{1/2})}.
$$
Next, \eqref{ss1} tells us, if $t$ is sufficiently large (so that $t-t^{1/2}\geq t_0$ and 
$t-t^{1/2}\geq t/2$), that
$$
\delta_2(t,i) \leq C \int_{t-t^{1/2}}^t s^{-d/2-1/3} e^{\alpha_0 s}ds \leq C t^{-d/2-1/3}e^{\alpha_0 t}.
$$
Recalling that $|\partial_t p_t(x)| \leq C t^{-d/2-1}$, we get (still for $t$ large enough so that
$t-t^{1/2}\geq t/2$)
$$
\delta_3(t,i) \leq C \int_{t-t^{1/2}}^t s^{-d/2-1}(t-s) e^{\alpha_0 s}ds \leq C t^{-d/2-1/2}e^{\alpha_0 t}.
$$
Finally, if $t$ is suffiently large, we can bound $p_{at}(i)$ by $1$ (for all $i$), whence
$$
\delta_4(t,i) \leq \alpha_0^{-1} |e^{\alpha_0t}-e^{\alpha_0(t-t^{1/2})}  - e^{\alpha_0t}|\leq C e^{\alpha_0(t-t^{1/2})}.
$$
All in all, we have proved that for all $t$ large enough, all $i\in\zz^d$,
\begin{align*}
\Big|\intot \Gamma(i,s)ds -   p_{at}(i)e^{\alpha_0 t} /\alpha_0 \Big|
\leq C e^{\alpha_0(t-t^{1/2})}+C t^{-d/2-1/3}e^{\alpha_0 t}+C t^{-d/2-1/2}e^{\alpha_0 t} \leq C t^{-d/2-1/3}e^{\alpha_0 t}
\end{align*}
as desired.
\end{proof}

We finally can give the

\begin{preuve} {\it of Theorem \ref{propa}.} We divide the proof in several steps.
\vip

{\it Step 1.} As usual, we write 
\begin{align*}
Z^{i}_t=\intot\int_0^\infty \indiq_{\big\{z \leq (2d+1)^{-1}\sum_{j\rightarrow i} 
\big[\int_0^{s-}\varphi(s-u)dZ_u^{j}+ \varphi(s) \indiq_{\{j =0\}}\big]\big\}}
\pi^i(ds\,dz),
\end{align*}
we set $m^i_t=\E[Z^i_t]$ and $\bm_t=(m^i_t)_{i\in\zz^d}$. A simple computation, using Lemma \ref{tlt}, shows that
$$
m^i_t=\intot (2d+1)^{-1} \sum_{j\rightarrow i} \varphi(t-s) m^j_s ds + \indiq_{\{0\rightarrow i\}}
(2d+1)^{-1}\intot \varphi(s)ds.
$$
Using the vector formalism and introducing $\bdelta=(\delta_i)_{i\in\zz^d}$ defined
by $\delta_i=\indiq_{\{i=0\}}$, this rewrites
$\bm_t= (A\bdelta)\intot \varphi(s)ds + \intot \varphi(t-s) (A \bm_s) ds$. 
Differentiating this formula (see Lemma \ref{vectconv} for the justification 
of a very similar differentiation), we find
$\bm_t'=(A\bdelta) \varphi(t) + \intot \varphi(t-s) A \bm'_s ds$,
which can be solved as (see Lemma \ref{vectconv} again)
$\bm_t'= \sum_{n\geq 1} \varphi^{\star n}(t) A^n  \bdelta$.
Hence for all $i\in\zz^d$, 
\begin{align}\label{rrr1}
(m^i_t)'=\sum_{n\geq 1} A^n(0,i) \varphi^{\star n}(t).
\end{align}
We introduce the martingales, for $i\in \zz^d$, (we use a tilde for compensation),
\begin{align*}
M^{i}_t=\intot\int_0^\infty \indiq_{\big\{z \leq (2d+1)^{-1}\sum_{j\rightarrow i} 
\big[\int_0^{s-}\varphi(s-u)dZ_u^{j}+ \varphi(s) \indiq_{\{j =0\}}\big]\big\}}
\tilde\pi^i(ds\,dz)
\end{align*}
and observe as usual that $[M^i,M^j]_t=0$ when $i\ne j$ 
(because these martingales a.s. never jump at the same time) while
$[M^i,M^i]_t=Z^i_t$. We will use several times that for any family 
$(\alpha_i)_{i\in\zz^d}$,
\begin{align}\label{wwust}
\E\big[\big(\sum_{i\in\zz^d} \alpha_i M^i_t\big)^2\big] = \sum_{i\in\zz^d} \alpha_i^2 m^i_t.
\end{align}
We finally introduce $U^i_t=Z^i_t-m^i_t$, the vectors  $\bU_t=(U^i_t)_{i\in\zz^d}$ and
$\bM_t=(M^i_t)_{i\in\zz^d}$ and observe, exactly as in the proof of Theorem \ref{nn1}-Step 1, that
$\bU_t = \bM_t + A \intot \varphi(t-s) \bU_s ds$, whence 
$\bU_t = \Big(\bM_t + \sum_{n\geq 1} A^n \intot \varphi^{\star n}(t-s) \bM_s ds \Big)$ and thus,
for all $i\in\zz^d$,
\begin{align}\label{rrr2}
Z^i_t = m^i_t + M^i_t + \sum_{j\in\zz^d} \sum_{n\geq 1} \intot \varphi^{\star n} (t-s) A^n(i,j) M^j_s ds=m^i_t+M^i_t+W^i_t,
\end{align}
the last equality defining $W^i_t$. 

\vip

{\it Step 2.} Here we treat the terms $m^i_t$, $M^i_t$, and collect a few more information.
Recall that $\Gamma(t,i)=\sum_{n\geq 1} A^n(0,i)\varphi^{\star n}(t)$.
Starting from \eqref{rrr1}, we see that $m^i_t = \int_0^t  \Gamma(i,s)ds$ and 
deduce from Lemma \ref{prelim}-(iv) that there are $C$ and $t_0\geq 0$ such that
for all $i\in\zz^d$, all $t\geq t_0$,
$$
\big|m^i_t -\frac{1}{\alpha_0} p_{at}(i)e^{\alpha_0 t} \big|
\leq  \frac{Ce^{\alpha_0 t}}{t^{d/2+1/3}}.
$$
Next, Lemma \ref{prelim}-(ii)-(iii) imply that $\sum_{i \in \zz^d} m^i_t = \int_0^t e^{\alpha_0 s} ds
\leq C e^{\alpha_0 t}$ and $\sum_{i \in \zz^d} |i|^2 m^i_t \leq C \int_0^t (1+s)e^{\alpha_0 s} ds
\leq C(1+t) e^{\alpha_0 t}$. Finally, we observe that
$$
\E[|M^i_t|] \leq (m^i_t)^{1/2} \leq \Big(\sum_{j \in \zz^d} m^j_t\Big)^{1/2} \leq C e^{\alpha_0 t /2}.
$$

{\it Step 3.} We introduce 
$$
X= \sum_{j\in\zz^d}\int_0^\infty e^{-\alpha_0s}M^j_s ds 
$$
and show that there are $C>0$ and $t_0>0$ such that for all $i\in\zz^d$, all $t\geq t_0$,
$$
\E[|W^i_t- p_{at}(i) e^{\alpha_0 t} X |] \leq \frac{Ce^{\alpha_0 t}}{t^{d/2+1/3}}.
$$
We observe that since $A^n(i,j)=A^n(0,i-j)$, it holds that
$W^i_t= \sum_{j\in\zz^d} \int_0^{t} \Gamma(i-j,t-s) M^j_s ds$. We also 
introduce the auxiliary processes
\begin{align*}
\bW^i_t=& \sum_{j\in\zz^d} \int_0^{t^{1/2}} \Gamma(i-j,t-s) M^j_s ds,\\
\tW^i_t=& \sum_{j\in\zz^d} \int_0^{t^{1/2}} p_{a(t-s)}(i-j)e^{\alpha_0(t-s)} M^j_s ds,\\
\hW^i_t=& \sum_{j\in\zz^d} \int_0^{t^{1/2}} p_{at}(i)e^{\alpha_0(t-s)} M^j_s ds.
\end{align*}

{\it Step 3.1.} Here we show that $\E[|W^i_t - \bW^i_t|] \leq C \exp(\alpha_0 t - (\alpha_0/2)t^{1/2})$.
By definition of $\Gamma$ and using \eqref{wwust},
\begin{align*}
\E[|W^i_t - \bW^i_t|] \leq& \int_{t^{1/2}}^t \sum_{n\geq 1} \varphi^{\star n}(t-s)
\E\big[\big|\sum_{j\in\zz^d} A^n(i,j)M^j_s \big|\big] ds\\
\leq& \int_{t^{1/2}}^t \sum_{n\geq 1} \varphi^{\star n}(t-s)
\big(\sum_{j\in\zz^d} (A^n(i,j))^2 m^j_s \big)^{1/2} ds.
\end{align*}
Using that $A^n(i,j)$ is bounded by $1$ and that
$\sum_{i \in \zz^d} m^i_t \leq C e^{\alpha_0 t}$ (see Step 2), we see that 
$\sum_{j\in\zz^d} (A^n(i,j))^2 m^j_s \leq Ce^{\alpha_0 s/2}$. Next, 
the explicit expression of $\varphi^{\star n}$ (see Lemma \ref{prelim}-(i)) gives 
$\sum_{n\geq 1} \varphi^{\star n}(t-s)=e^{\alpha_0 (t-s)}$. We finally find
\begin{align*}
\E[|W^i_t - \bW^i_t|] \leq C \int_{t^{1/2}}^t e^{\alpha_0(t-s)}  e^{\alpha_0 s/2} ds
\leq C e^{\alpha_0 t}e^{-\alpha_0 t^{1/2}/2}.
\end{align*}

{\it Step 3.2.} We next check that $\E[|\bW^i_t-\tW^i_t|]\leq C t^{-d/2-1/3}e^{\alpha_0 t}$.
Using \eqref{wwust}, we get
\begin{align*}
\E[|\bW^i_t - \tW^i_t|] \leq& \int_0^{t^{1/2}} \Big[\sum_{j\in\zz^d} m^j_s \big( 
\Gamma(i-j,t-s)-p_{a(t-s)}(i-j)e^{\alpha_0(t-s)}\big)^2 \Big]^{1/2} ds.
\end{align*}
Using Lemma \ref{prelim}-(iv),
$|\Gamma(i-j,t-s)-p_{a(t-s)}(i-j)e^{\alpha_0(t-s)}| \leq  C (t-s)^{-d/2-1/3} e^{\alpha_0(t-s)}$
if $t-s$ is large enough (which is the case for all $s\in [0,t^{1/2}]$ if $t$ is large enough).
Since furthermore $\sum_{i \in \zz^d} m^i_t \leq C e^{\alpha_0 t}$ (see Step 2), we find
\begin{align*}
\E[|\bW^i_t - \tW^i_t|] \leq& C e^{\alpha_0 t}\int_0^{t^{1/2}} (t-s)^{-d/2-1/3}e^{- \alpha_0 s/2}ds.
\end{align*}
For $t$ large enough, we clearly have $(t-s)^{-d/2-1/3} \leq 2 t^{-d/2-1/3}$ for all $s\in [0,t^{1/2}]$, whence
\begin{align*}
\E[|\bW^i_t - \tW^i_t|] \leq& C t^{-d/2-1/3}  e^{\alpha_0 t} \int_0^{t^{1/2}} e^{-\alpha_0 s / 2}ds \leq
C t^{-d/2-1/3}  e^{\alpha_0 t} .
\end{align*}

{\it Step 3.3.} We now prove that $\E[|\tW^i_t-\hW^i_t|]\leq C t^{-d/2-1/2}e^{\alpha_0 t}$. As usual,
we start with
\begin{align*}
\E[|\tW^i_t - \hW^i_t|] \leq&  \int_0^{t^{1/2}} \Big(\sum_{j\in\zz^d} m^j_s \big(
p_{a(t-s)}(i-j)-p_{a t}(i) \big)^2 \Big)^{1/2} e^{\alpha_0(t-s)} ds.
\end{align*}
But an easy computation (using that $|\partial_t p_t(x)| \leq C t^{-d/2-1}$ and
$|\nabla_x p_t(x)| \leq C t^{-d/2-1/2}$) shows that for all $t>0$, all $h\in(0,t/2)$, all $x,y \in \rr^d$,
$|p_{t-h}(x-y)-p_t(x)| \leq C h t^{-d/2-1} + |y| t^{-d/2-1/2}$. Hence if $t$ is large enough so that 
$t-t^{1/2} \geq t/2$, we can write
\begin{align*}
\E[|\tW^i_t - \hW^i_t|] \leq& C \int_0^{t^{1/2}} \Big(\sum_{j\in\zz^d} m^j_s \big(s t^{-d/2-1} + |j| t^{-d/2-1/2}\big)^2 
\Big)^{1/2} e^{\alpha_0(t-s)} ds\\
\leq & C t^{-d/2-1/2} \int_0^{t^{1/2}} \Big(\sum_{j\in\zz^d} m^j_s\Big)^{1/2} e^{\alpha_0(t-s)} ds\\
&+  C t^{-d/2-1/2} \int_0^{t^{1/2}} \Big(\sum_{j\in\zz^d} |j|^2m^j_s\Big)^{1/2} e^{\alpha_0(t-s)} ds.
\end{align*}
Finally, we know from Step 2 that $\sum_{j\in\zz^d} m^j_s+\sum_{j\in\zz^d} |j|^2m^j_s \leq C(1+s)e^{\alpha_0 s}$, whence
\begin{align*}
\E[|\tW^i_t - \hW^i_t|] \leq& C t^{-d/2-1/2} \int_0^{t^{1/2}} (1+s)^{1/2}e^{\alpha_0 s/2} e^{\alpha_0(t-s)} ds
\leq C  t^{-d/2-1/2}e^{\alpha_0 t}.
\end{align*}

{\it Step 3.4} We finally verify that $\E[|\hW^i_t- p_{at}(i) e^{\alpha_0 t} X |] \leq 
C e^{\alpha_0 t - (\alpha_0/2) t^{1/2}}$.
We note that
$$
\hW^i_t- p_{at}(i) e^{\alpha_0 t} X=  p_{at}(i)e^{\alpha_0 t}\sum_{j\in\zz^d} \int_{t^{1/2}}^t e^{-\alpha_0 s} M^j_s ds.
$$
For $t$ large enough (not depending on $i$), 
we can bound $p_{at}(i)$ by $1$. Hence, we infer from \eqref{wwust} and 
the fact that $\sum_{j\in\zz^d} m^j_s \leq Ce^{\alpha_0 s}$ that
\begin{align*}
\E[|\hW^i_t- p_{at}(i) e^{\alpha_0 t} X |] \leq & C e^{\alpha_0 t} \int_{t^{1/2}}^t e^{-\alpha_0 s} 
\big(\sum_{j\in\zz^d} m^j_s \big)^{1/2} ds \leq C e^{\alpha_0 t} \int_{t^{1/2}}^\infty e^{-\alpha_0 s /2}  ds,
\end{align*}
from which the conclusion follows.

\vip

{\it Step 3.5.} Gathering Steps 3.1 to 3.4, we conclude that indeed, there are $C>0$ and $t_0>0$
such that for all $t\geq t_0$, all $i\in\zz^d$, $\E[|W^i_t- p_{at}(i) e^{\alpha_0 t} X |]
\leq C e^{\alpha_0 t} t^{-d/2-1/3}$.

\vip

{\it Step 4.} Define the random variable $H=\alpha_0^{-1}+X$. Recall \eqref{rrr2}
and write $Z^i_t - p_{at}(i) e^{\alpha_0 t} H = [m^i_t - \alpha_0^{-1}p_{at}(i) e^{\alpha_0 t}]
+ M^i_t + [W^i_t - p_{at}(i) e^{\alpha_0 t} X]$.
Gathering Steps 2 and 3,
we see that there are $C>0$ and $t_0>0$ such that for all $t\geq t_0$, all $i\in\zz^d$, 
$$
\E[|Z^i_t- p_{at}(i) e^{\alpha_0 t} H |] \leq \frac{Ce^{\alpha_0 t}}{t^{d/2+1/3}} + Ce^{\alpha_0 t /2} \leq 
\frac{Ce^{\alpha_0 t}}{t^{d/2+1/3}}.
$$
To prove that $H$ is nonnegative, it suffices to use the above inequality with $i=0$, divided by
$p_{at}(0) e^{\alpha_0 t}$. Recalling that $p_{at}(0)=ct^{-d/2}$ for some constant $c$, we deduce that
$\E[|H - Z^0_t e^{-\alpha_0 t}/p_{at}(0)|] \leq C t^{-1/3}$. Consequently, $H$ is the limit (in $L^1$)
of $Z^0_t e^{-\alpha_0 t}/p_{at}(0)$, and is thus nonnegative.
This ends the proof of (i).

\vip

{\it Step 5.} We now check (ii), which follows from (i): for $x\in \rr^d$, and $t\geq t_0$,
\begin{align*}
\E\big[\big|t^{d/2}e^{-\alpha_0 t} Z_t^{\lfloor x t^{1/2}\rfloor} - H p_a(x) \big|\big] \leq &
t^{d/2}e^{-\alpha_0 t}\E\big[\big|Z_t^{\lfloor x t^{1/2}\rfloor} - H e^{\alpha_0 t}p_{at}(\lfloor x t^{1/2} \rfloor) \big|\big] \\
&+ \E[H] \big|t^{d/2}p_{at}(\lfloor x t^{1/2} \rfloor) -p_a(x)\big|.
\end{align*}
The first term on the RHS is bounded, by (i), by $C t^{-1/3}$, which tends to $0$ as $t\to\infty$.
The second term also tends to $0$, simply because
$$
t^{d/2}p_{at}(\lfloor x t^{1/2} \rfloor)=p_a\big(t^{-1/2} \lfloor x t^{1/2}\rfloor \big) \to p_a(x) 
\quad \hbox{as $t\to\infty$.}
$$

{\it Step 6.} It remains to prove (iii). 
First note that $H = \lim_{t \rightarrow \infty}e^{-\alpha_0 t} Z_t$ in $L^1$, where $Z_t = \sum_{i \in \zz^d}Z_t^i$. 
Indeed, recalling \eqref{rrr2}
$$
Z_t = \sum_{i \in \zz^d}m_t^i+\sum_{i \in \zz^d}M_t^i+\sum_{i \in \zz^d}W_t^i.
$$
We have $\sum_{i \in \zz^d}m_t^i=\intot e^{\alpha_0 s} ds=\alpha_0^{-1}[e^{\alpha_0t}-1]$ by Step 2, 
\eqref{wwust} implies that $\E[(\sum_{i \in \zz^d}M_t^i)^2]=\sum_{i \in \zz^d}m_t^i$ and finally
$\sum_{i \in \zz^d}W_t^i=e^{\alpha_0 t}\int_0^t e^{-\alpha_0s}\sum_{i \in \zz^d}M_s^ids$, therefore $e^{-\alpha_0 t}Z_t$ 
converges to $\alpha_0^{-1}+0+X=H$ as $t\rightarrow \infty$ in $L^1$.
We also note that $\E[H]=\alpha_0^{-1}>0$ since $\E[X]=0$.

\vip

Next, we recall that by Theorem \ref{ex}-(i), $(Z_t)_{t\geq 0}$ is a scalar impulsion Hawkes process: its compensator
is given by $A_t=\intot [\varphi(s) + \int_0^{s-}\varphi(s-u)dZ_u]ds$. We claim that $(Z_t)_{t\geq 0}$ 
has the same law as $(\tZ_t)_{t\geq 0}$ built as follows: 

\vip

$\bullet$ consider a Poisson process $(N_t)_{t\geq 0}$ with intensity
$\varphi(t)dt$, observe that $N_\infty$ is Poisson$(\Lambda)$-distributed,
denote by $0<T_1<\dots<T_{N_\infty}$ its times of jump (we adopt the convention that $T_i=\infty$ for
$i > N_\infty$), 

$\bullet$ consider an i.i.d. family $(\tZ_t^k)_{t\geq 0}$ of scalar impulsion Hawkes process with same law as 
$(Z_t)_{t\geq 0}$,

$\bullet$ put $\tZ_t= N_t+ \sum_{i=1}^{N_\infty} \tZ_{t-T_k}^k \indiq_{\{t\geq T_k\}}$.

\vip

\noindent Indeed, $(\tZ_t)_{t\geq 0}$ is a counting process with compensator
$$
\intot [\varphi(s)+ \sum_{i\geq 1} 1_{\{s>T_k\}} (\varphi(s-T_k) + \int_0^{(s-T_k)-} \varphi(s-T_k-u) d\tZ^k_u )]ds
=\intot [\varphi(s) + \int_0^{s-} \varphi(s-u) d\tZ_u]ds.
$$

We define $\tH=\lim_{t\to \infty} e^{-\alpha_0t}\tZ_t$ and, for each $k\geq 1$, 
$\tH_k=\lim_{t\to \infty} e^{-\alpha_0t}\tZ_t^k$. We obviously have $\tH = \sum_{k=1}^{N_\infty} e^{-\alpha_0 T_k}\tH_k$.
Denoting by $p=\Pr(H=0)$ (which also equals $\Pr(\tH=0)$ and $\Pr(\tH_k=0)$ for all $k\geq 1$), we deduce
by independence of the family $(\tH_k)_{k\geq 1}$, that 
$p= \sum_{n \geq 1} \Pr(N_\infty=n) p^n$. Hence $p= \sum_{n \geq 1} e^{-\Lambda} \Lambda^np^n/n!= e^{-\Lambda(1-p)}$.
Since $\E[H]>0$, we cannot have $p =1$. Hence $p$ is the unique solution in $(0,1)$ 
to $p=e^{-\Lambda(1-p)}$. Recalling that $\Pr(\Omega_e)=\exp(-\gamma_\Lambda \Lambda)$ where $\gamma_\Lambda \in (0,1)$
is characterised by $\gamma_\Lambda \Lambda + \log(1-\gamma_\Lambda)=0$ by Theorem \ref{ex}-2-(ii), we 
easily check that $p=\Pr(\Omega_e)$.

\vip

By definition of $H$, we have $\Omega_e \subset \{H=0\}$. Since
$\Pr(\Omega_e)=\Pr(H=0)$ we conclude that a.s., $H>0$ on $\Omega_e^c$.
\end{preuve}

\section{Appendix: convolution equations}

We collect here some technical results about convolution equations.
We start with an identity of constant use in the paper.

\begin{lem}\label{tlt}
Let $\phi:[0,\infty)\mapsto \rr$ be locally integrable and let $\alpha:[0,\infty)\mapsto \rr$
have finite variations on compact intervals and satisfy $\alpha(0)=0$. Then for all $t\geq 0$,
$$
\intot \int_0^{s-} \phi(s-u) d\alpha(u) ds = 
\intot \int_0^{s} \phi(s-u) d\alpha(u) ds = 
\int_0^{t} \phi(t-s)\alpha(s)ds.
$$
\end{lem}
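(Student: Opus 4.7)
The plan is to prove all three expressions equal a common quantity, namely $\int_0^t \Phi(t-u)\, d\alpha(u)$, where $\Phi(v) = \int_0^v \phi(r)\, dr$ (which is well-defined and continuous since $\phi$ is locally integrable).

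First, I would dispose of the equality between the two left-most expressions. Since $\alpha$ has finite variation on compact intervals, it has at most countably many jumps. Hence for all but countably many $s \in [0,t]$, we have $\int_0^{s-} \phi(s-u)\, d\alpha(u) = \int_0^{s} \phi(s-u)\, d\alpha(u)$, and the two $ds$-integrals over $[0,t]$ coincide.

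Next, I would apply Fubini's theorem. To justify it, decompose $d\alpha = d\alpha^+ - d\alpha^-$ into positive and negative variation measures, both finite on $[0,t]$. Then
\begin{equation*}
\int_0^t \int_0^t |\phi(s-u)|\, \mathbf{1}_{\{u \leq s\}}\, d|\alpha|(u)\, ds \;\leq\; |\alpha|([0,t])\cdot \int_0^t |\phi(r)|\, dr \;<\; \infty,
\end{equation*}
so Fubini applies to each signed component. Swapping the order in the middle expression gives
\begin{equation*}
\int_0^t \int_0^s \phi(s-u)\, d\alpha(u)\, ds \;=\; \int_0^t \left(\int_u^t \phi(s-u)\, ds\right) d\alpha(u) \;=\; \int_0^t \Phi(t-u)\, d\alpha(u).
\end{equation*}

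Finally, for the right-most expression, I use $\alpha(s) = \int_0^s d\alpha(u)$ (valid because $\alpha(0) = 0$) to write
\begin{equation*}
\int_0^t \phi(t-s)\alpha(s)\, ds \;=\; \int_0^t \int_0^s \phi(t-s)\, d\alpha(u)\, ds \;=\; \int_0^t \left(\int_u^t \phi(t-s)\, ds\right) d\alpha(u) \;=\; \int_0^t \Phi(t-u)\, d\alpha(u),
\end{equation*}
using Fubini once more (justified by the same bound). This identifies both sides with $\int_0^t \Phi(t-u)\, d\alpha(u)$ and concludes the proof. The only delicate point is the Fubini justification for the signed measure $d\alpha$, which is handled by the finite total variation hypothesis; everything else reduces to an elementary change of variables.
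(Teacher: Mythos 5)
Your proof is correct and follows essentially the same route as the paper: both rest on discarding the countably many atoms of $d\alpha$ to identify the $s-$ and $s$ integrals, then applying Fubini (justified by local integrability of $\phi$ and finite total variation of $\alpha$ on $[0,t]$). The only cosmetic difference is that you identify both sides with the common quantity $\int_0^t \Phi(t-u)\,d\alpha(u)$, whereas the paper transforms the left-hand side directly into the right-hand side by a second application of Fubini and the substitution $s=t-v$; your explicit integrability bound for Fubini is a welcome detail the paper leaves implicit.
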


\begin{proof}
First, we clearly have that $\int_0^{s-} \phi(s-u) d\alpha(u)=\int_0^{s} \phi(s-u) d\alpha(u)$ 
for almost every $s\geq 0$, whence 
$\intot \int_0^{s-} \phi(s-u) d\alpha(u) ds=\intot \int_0^{s} \phi(s-u) d\alpha(u) ds$.
Using twice the Fubini theorem,
\begin{align*}
\intot \Big(\int_0^{s} \phi(s-u) d\alpha(u)\Big) ds=&\int_0^{t} \Big(\int_{u}^t \phi(s-u)ds\Big)d\alpha(u) \\
=& \int_0^{t}  \Big(\int_0^{t-u} \phi(v)dv \Big) d\alpha(u)\\
=& \int_0^{t}  \Big(\int_0^{t-v} d\alpha(u)   \Big) \phi(v)dv\\
=& \int_0^{t} \alpha(t-v)\phi(v)dv,
\end{align*}
from which the conclusion follows, using the substitution $s=t-v$.
\end{proof}

We carry on with a generalized Gr\"onwall-Picard lemma, which is more or less standard.

\begin{lem}\label{grrrr} Let $\phi:[0,\infty)\mapsto [0,\infty)$ be locally integrable
and $g:[0,\infty)\mapsto [0,\infty)$ be locally bounded.

\vip

(i) Consider a locally bounded nonnegative function $u$ such that for all $t\geq 0$,
$u_t \leq g_t+\intot \phi(t-s)u_sds$ for all $t\geq 0$. 
Then $\sup_{[0,T]}u_t \leq C_T \sup_{[0,T]} g_t$, for some constant
$C_T$ depending only on $T>0$ and $\phi$.

\vip

(ii) Consider a sequence of locally bounded nonnegative functions $u^n$ such that for all 
$t\geq 0$, all $n\geq 0$, $u_t^{n+1} \leq \intot \phi(t-s)u_s^nds$. Then
$\sup_{[0,T]}\sum_{n\geq 0}u_t^n \leq C_T$,
for some constant $C_T$ depending only on $T>0$, $u^0$ and $\phi$.

\vip

(iii) Consider a sequence of locally bounded nonnegative functions $u^n$ such that for all 
$t\geq 0$,
all $n\geq 0$, $u_t^{n+1} \leq g_t+\intot \phi(t-s)u_s^nds$. Then for all $T\geq 0$, 
$\sup_{[0,T]} \sup_{n\geq 0} u_t^n \leq C_T$,
for some constant $C_T$ depending only on $T>0$, $u^0$, $g$ and $\phi$.
\end{lem}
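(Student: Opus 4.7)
\medskip
\noindent\textbf{Proof plan for Lemma \ref{grrrr}.}
The unifying idea is to absorb the potentially large local integral of $\phi$ via an exponential reweighting. For a fixed horizon $T>0$, set $\tilde\phi(s)=e^{-\alpha s}\phi(s)$; since $\phi$ is locally integrable on $[0,T]$, the dominated convergence theorem gives $\int_0^T \tilde\phi(s)\,ds \to 0$ as $\alpha\to\infty$, so we may choose $\alpha=\alpha(T,\phi)$ large enough that $\int_0^T \tilde\phi(s)\,ds \leq 1/2$. For any locally bounded nonnegative $v$, we have the elementary bound
\[
e^{-\alpha t}\int_0^t \phi(t-s)v_s\,ds = \int_0^t \tilde\phi(t-s)\, e^{-\alpha s}v_s\,ds \leq \tfrac12 \sup_{[0,t]} e^{-\alpha s}v_s
\]
for every $t\in[0,T]$. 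This single estimate drives all three points.

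For (i), multiply the hypothesis $u_t \leq g_t + \int_0^t \phi(t-s)u_s\,ds$ by $e^{-\alpha t}$ and take the supremum over $[0,T]$: the above inequality yields
\[
\sup_{[0,T]} e^{-\alpha t}u_t \leq \sup_{[0,T]} e^{-\alpha t}g_t + \tfrac12 \sup_{[0,T]} e^{-\alpha t}u_t,
\]
where the left-hand side is finite since $u$ is locally bounded. Rearranging and then multiplying by $e^{\alpha T}$ gives $\sup_{[0,T]} u_t \leq 2 e^{\alpha T}\sup_{[0,T]} g_t$, i.e.\ $C_T := 2e^{\alpha T}$ works.

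For (ii), apply the same reweighting to the recursion $u^{n+1}_t \leq \int_0^t \phi(t-s)u^n_s\,ds$: we obtain $\sup_{[0,T]} e^{-\alpha t}u^{n+1}_t \leq \tfrac12 \sup_{[0,T]} e^{-\alpha t}u^n_t$ for every $n\geq 0$, so by iteration $\sup_{[0,T]} e^{-\alpha t}u^n_t \leq 2^{-n}\sup_{[0,T]} e^{-\alpha t}u^0_t$. Summing the geometric series and multiplying back by $e^{\alpha T}$ yields $\sup_{[0,T]}\sum_{n\geq 0} u^n_t \leq 2 e^{\alpha T}\sup_{[0,T]} u^0_t$. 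For (iii) with the inhomogeneous recursion $u^{n+1}_t \leq g_t + \int_0^t \phi(t-s)u^n_s\,ds$, the same argument gives $a_{n+1} \leq b + \tfrac12 a_n$ with $a_n := \sup_{[0,T]} e^{-\alpha t}u^n_t$ and $b := \sup_{[0,T]} e^{-\alpha t}g_t$; a trivial induction produces $a_n \leq 2b + a_0$, hence $\sup_{n\geq 0}\sup_{[0,T]} u^n_t \leq e^{\alpha T}(2\sup_{[0,T]} g_t + \sup_{[0,T]} u^0_t)$.

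The only subtle point is the first step in (i): we must know a priori that $\sup_{[0,T]} e^{-\alpha t}u_t<\infty$ before we can rearrange, which is precisely the local boundedness assumption on $u$; the same remark applies to the finiteness of the $a_n$ in (ii)--(iii). Once this is noted, no further technical obstacle arises.
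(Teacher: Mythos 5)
Your proof is correct, and it takes a genuinely different route from the paper's. For (i), the paper truncates the kernel: it picks $A$ with $\int_0^T\phi(s)\indiq_{\{\phi(s)\geq A\}}ds\leq 1/2$, splits the convolution into the bounded part (handled by the classical Gr\"onwall lemma) and the small-mass part (absorbed into $\sup_{[0,t]}u$), whereas you make the whole convolution operator a contraction in the weighted norm $\sup_{[0,T]}e^{-\alpha t}|\cdot|$ by choosing $\alpha$ large, so no appeal to the standard Gr\"onwall lemma is needed; both devices exploit the same fact that half of the mass of $\phi$ can be made small, but yours is more self-contained and gives the explicit constant $C_T=2e^{\alpha T}$. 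For (ii) and (iii) the paper proceeds by reduction: it first proves (iii) from (i) by applying (i) to the running maxima $v^n_t=\sup_{k\leq n}u^k_t$, which satisfy $v^n_t\leq u^0_t+g_t+\intot\phi(t-s)v^n_sds$, and then deduces (ii) from (iii) via the partial sums $v^n_t=\sum_{k=0}^n u^k_t$; you instead treat the recursions directly in the weighted norm, obtaining geometric decay $a_n\leq 2^{-n}a_0$ in (ii) and the uniform bound $a_n\leq 2b+a_0$ in (iii) by a one-line induction. Your handling of the only delicate point (the a priori finiteness of $\sup_{[0,T]}e^{-\alpha t}u_t$, resp.\ of the $a_n$, coming from the local boundedness hypotheses, which legitimates the rearrangement) is exactly the same issue the paper's argument must and does address through the local boundedness of $u$ and $v^n$. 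Both arguments are complete; yours is arguably more unified, while the paper's truncation trick has the minor advantage of not introducing the auxiliary parameter $\alpha$ and of reusing the classical Gr\"onwall inequality as a black box.
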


\begin{proof}
We start with point (i). Fix $T>0$ and consider $A>0$ such that
$\int_0^T\phi(s)\indiq_{\{\phi(s)\geq A\}}ds\leq 1/2$. Then for all $t\in[0,T]$,
\begin{align*}
u_t \leq g_t+\intot \indiq_{\{\phi(t-s)\leq A\}} \phi(t-s)u_s ds + 
\intot \indiq_{\{\phi(t-s)>A\}} \phi(t-s)u_s ds \leq 
g_t + A\intot u_s ds + \sup_{[0,t]} u_s /2.
\end{align*} 
from which we deduce that
$\sup_{[0,t]} u_s \leq 2 \sup_{[0,t]}g_s + 2A\intot u_s ds$. We then can apply
the standard Gr\"onwall Lemma to get $\sup_{[0,T]} u_s \leq 2 (\sup_{[0,T]}g_s ) e^{2AT}$.

\vip

To check point (iii), put $v^n_t=\sup_{k=0,\dots,n}u_t^k$. One easily checks 
that for all $n\geq 0$, $v^n_t \leq u^0_t+g_t + \intot \phi(t-s)v_s^nds$.
By point (i), $\sup_{[0,T]}v_t^n \leq C_T \sup_{[0,T]}(g_t+u^0_t)$. Letting 
$n$ increase to infinity concludes the proof.

\vip
 
Point (ii) follows from point (iii), since $v^n_t=\sum_{k=0}^n u^k_t$
satisfies $v^{n+1}_t \leq u^0_t + \intot \phi(t-s)v^n_s ds$.
\end{proof}

We next prove an easy well-posedness result for a general convolution equation.

\begin{lem}\label{expuni}
Let $h:\rr\mapsto [0,\infty)$ be Lipschitz-continuous and $\varphi : [0,\infty) \mapsto \rr$ 
be locally integrable. The equation
\begin{equation} \label{eqtostudy}
m_t = \int_0^t h\big(\int_0^s \varphi(s-u)dm_u\big)ds
\end{equation}
has a unique non-decreasing locally bounded solution. Furthermore, $m$ is of class $C^1$
on $[0,\infty)$.

\vip

If $h(x)=\mu+x$ for some $\mu>0$ and if $\varphi$ is nonnegative, Equation \eqref{eqtostudy}
rewrites as 
\begin{equation} \label{eqtostudy2}
m_t =  \mu t + \intot \varphi(t-s)m_s ds.
\end{equation}
\end{lem}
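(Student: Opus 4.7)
The plan is to reduce the equation to a Volterra equation for the (putative) derivative $f = m'$, handle that by Picard iteration using the Gronwall-type Lemma \ref{grrrr}, and then deduce everything else. First, I observe that any solution $m$ to \eqref{eqtostudy} is absolutely continuous (it is written as $\int_0^t (\cdot)\,ds$), so $m$ is of bounded variation with $dm_u = m'_u\,du$ and $m'_t = h(\int_0^t \varphi(t-u)\,dm_u)$ for a.e.~$t$. Since $h\geq 0$, automatically $m'\geq 0$ and $m$ is non-decreasing. Substituting $dm_u = m'_u du$ turns \eqref{eqtostudy} into the equivalent Volterra equation
\begin{equation}\label{volt}
f_t = h\Bigl(\int_0^t \varphi(t-u)f_u\,du\Bigr),\qquad f:=m',
\end{equation}
and a non-decreasing locally bounded solution $m$ of \eqref{eqtostudy} corresponds exactly to a nonnegative locally bounded solution $f$ of \eqref{volt} via $m_t = \int_0^t f_s\,ds$.

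For uniqueness of \eqref{volt}, if $f$ and $\tilde f$ are two locally bounded solutions, Lipschitz continuity of $h$ gives
$$|f_t - \tilde f_t| \leq |h|_{lip}\int_0^t |\varphi(t-u)|\,|f_u - \tilde f_u|\,du,$$
and since $|f-\tilde f|$ is locally bounded, Lemma \ref{grrrr}-(i) (with $g\equiv 0$) forces $f = \tilde f$. For existence, I would run a Picard iteration: set $f^0 \equiv 0$ and $f^{n+1}_t = h(\int_0^t \varphi(t-u)f^n_u\,du)$. Using $|h(x)| \leq h(0) + |h|_{lip}|x|$, the sequence satisfies $f^{n+1}_t \leq h(0) + |h|_{lip}\int_0^t |\varphi(t-u)|f^n_u\,du$, so Lemma \ref{grrrr}-(iii) yields a uniform-in-$n$ local bound on $f^n$. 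The differences $\delta^n_t := |f^{n+1}_t - f^n_t|$ satisfy $\delta^{n+1}_t \leq |h|_{lip}\int_0^t |\varphi(t-u)|\delta^n_u\,du$, so Lemma \ref{grrrr}-(ii) gives $\sum_n \sup_{[0,T]}\delta^n_t < \infty$; hence $(f^n)$ is uniformly Cauchy on compacts and converges to a nonnegative locally bounded $f$ solving \eqref{volt}.

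To upgrade to $m \in C^1$, I would check that $f$ is continuous. Since $f$ is locally bounded and $\varphi$ is locally integrable, the map $t \mapsto \int_0^t \varphi(t-u)f_u\,du$ is continuous (by continuity of translation in $L^1_{loc}$), and composing with the continuous function $h$ shows $f$ is continuous; then $m_t = \int_0^t f_s\,ds$ belongs to $C^1([0,\infty))$ with $m'_t = f_t$. Finally, for the linear case $h(x) = \mu + x$ with $\varphi \geq 0$, plugging into \eqref{eqtostudy} yields
$$m_t = \mu t + \int_0^t \Bigl(\int_0^s \varphi(s-u)\,dm_u\Bigr)ds,$$
and Lemma \ref{tlt} applied with $\phi = \varphi$ and $\alpha = m$ (non-decreasing, hence of finite variation, with $m_0 = 0$) rewrites the double integral as $\int_0^t \varphi(t-s)m_s\,ds$, yielding \eqref{eqtostudy2}.

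The only mildly delicate step is the reduction to \eqref{volt}: it relies on the observation that \eqref{eqtostudy} forces $m$ to be absolutely continuous, which is immediate from the integral form of the equation but must be stated to justify rewriting $dm_u$ as $m'_u\,du$. Everything else is essentially a direct application of Lemma \ref{grrrr} and Lemma \ref{tlt}.
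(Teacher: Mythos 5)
Your overall strategy is the same as the paper's (Picard iteration combined with Lemmas \ref{tlt} and \ref{grrrr}), and your variant of running the iteration at the level of the derivative is in fact slightly more economical: it yields existence and the $C^1$ property in one stroke, whereas the paper first builds $m$ by a total-variation Picard scheme and then repeats the iteration for the derivatives to get regularity. The existence step, the continuity argument for $f$, and the linear rewriting via Lemma \ref{tlt} are fine, and at the same level of rigour as the paper's own proof.

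However, the uniqueness step has a genuine gap. You claim that a non-decreasing locally bounded solution $m$ of \eqref{eqtostudy} ``corresponds exactly'' to a nonnegative \emph{locally bounded} solution $f=m'$ of your Volterra equation, and your Gr\"onwall argument needs $|f-\tilde f|$ to be locally bounded in order to invoke Lemma \ref{grrrr}-(i). But local boundedness of $m$ only gives $m'\in L^1_{loc}$, not $m'\in L^\infty_{loc}$: from $f_t\leq h(0)+|h|_{lip}\int_0^t|\varphi(t-u)|f_u\,du$ with $\varphi$ merely locally integrable one cannot deduce that $f$ is locally bounded (the iterated kernels $|\varphi|^{\star n}$ need not ever become bounded), and no argument is offered. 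So, as written, you only prove uniqueness within the subclass of solutions whose derivative is locally bounded, which is strictly less than what the lemma asserts. The repair is short and is exactly the paper's device: apply the Lipschitz bound to the \emph{integrated} difference $v_t=\int_0^t|f_s-\tilde f_s|\,ds=\int_0^t|d(m_s-\tilde m_s)|$, which is automatically locally bounded (it is at most $m_t+\tilde m_t$); integrating your pointwise inequality in $t$ and using Lemma \ref{tlt} gives $v_t\leq |h|_{lip}\int_0^t|\varphi(t-s)|v_s\,ds$, and Lemma \ref{grrrr}-(i) then yields $v\equiv 0$, hence $m=\tilde m$.
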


\begin{proof}
Let $m$ and $\widetilde m$ be two such solutions.
Since $h$ is Lipschitz-continuous,
\begin{align*}
v_t = \int_0^t\big|d(m_u-\widetilde m_u)\big| & \leq C \int_0^t ds \int_0^s 
|\varphi(s-u)|\big|d(m_u-\widetilde m_u)\big|=C\int_0^t |\varphi(t-u)| v_u du.
\end{align*}
The last inequality follows from Lemma \ref{tlt}. Lemma \ref{grrrr}-(i) allows
us to conclude that $v_t=0$ for all $t\geq 0$ (because $v_t\leq m_t+\tilde m_t$ is locally
bounded), whence $m_t=\tilde m_t$ for all $t\geq 0$. 
For the existence, we consider the sequence of non-decreasing functions
$m_t^0=0$ and $m_t^{n+1}=\int_0^th\big( \int_0^s\varphi(s-u)dm_u^{n}\big)ds$ for every $n\geq 0$.
We easily check that $m_t^{n+1}\leq h(0) t + |h|_{lip}\intot |\varphi(t-u)|m^n_udu$, so that
$\sup_{n\geq 0} m^n_t$ is locally bounded by Lemma \ref{grrrr}-(iii).
Setting, for $n\geq 0$,  $\delta^{n}_t=\intot |d(m^{n+1}_u-m^n_u)|$, one 
readily gets
$\delta^{n+1}_t\leq  |h|_{lip}\intot |\varphi(s-u)|\delta^n_u du$ for all $n\geq 0$. Lemma
\ref{grrrr}-(ii) thus implies that $\sum_{n\geq 0} \delta^{n}_t <\infty$. All this classically
implies the existence of a locally bounded non-decreasing $m$ such that for all $t\geq 0$, 
$\lim_n \intot |d(m_u-m^n_u)|=0$. Checking that $m$ solves \eqref{eqtostudy} is routine.

\vip

To prove that $m$ is $C^1$, we use the previous Picard Iteration.
One easily sees, by induction, that $m^n$ is $C^1$ for all $n\geq 0$ and that 
$(m^{n+1}_t)'=h(\intot \varphi(t-u)(m^n_u)'du)$ (indeed,
$t\mapsto \intot \varphi(t-u)(m^n_u)'du=\intot \varphi(u)(m^n_{t-u})'du$ is continuous
because $\varphi$ is locally integrable and because $(m^n_t)'$ is continuous by the inductive assumption).
Next, a direct computation shows that
$|(m^{n+1}_t)'-(m^n_t)'| \leq C  \intot|\varphi(t-u)||(m^n_u)'- (m^{n-1}_u)' |du$.
Using Lemma \ref{grrrr}-(ii), we deduce that the sequence $(m^n)'$ is Cauchy (for the uniform convergence
on compact time intervals). The conclusion classically follows.

\vip

The equivalence between \eqref{eqtostudy} and \eqref{eqtostudy2} in the linear case directly follows from
Lemma \ref{tlt}.
\end{proof}

We now investigate the large-time behaviour of $m_t$ in the linear case. We start with the
subcritical case.

\begin{lem}\label{Plemlim}
Consider $\mu>0$ and a function 
$\varphi:(0,\infty)\mapsto [0,\infty)$ such that $\Lambda=\int_0^\infty \varphi(s)ds<1$.
By Lemma \ref{expuni}, \eqref{eqtostudy2} has a unique non-decreasing
locally bounded solution $(m_t)_{t\geq 0}$, which is furthermore of class $C^1$. 
It holds that $m'_t \sim a_0$ and $m_t \sim a_0 t$ as $t\to \infty$, where $a_0=\mu /(1-\Lambda)>0$.
\end{lem}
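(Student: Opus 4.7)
The plan is to differentiate \eqref{eqtostudy2}, solve the resulting convolution equation for $m'$ via the renewal kernel $\Gamma=\sum_{n\geq 1}\varphi^{\star n}$, and then use $\Lambda<1$ to identify the limit.

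First, since $m$ is $C^1$ with $m_0=0$ by Lemma \ref{expuni}, I rewrite the convolution as $\int_0^t\varphi(s)\,m_{t-s}\,ds$ and differentiate under the integral (the boundary term $\varphi(t)m_0$ vanishes), obtaining
\begin{equation*}
m'_t = \mu + \int_0^t \varphi(t-s)\,m'_s\,ds,
\qquad \text{i.e. } m' = \mu + \varphi \star m'.
\end{equation*}

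Next, I iterate this identity: a straightforward induction on $N\geq 1$ gives
\begin{equation*}
m'_t = \mu + \mu \sum_{n=1}^{N}\int_0^t \varphi^{\star n}(s)\,ds + \int_0^t \varphi^{\star(N+1)}(t-s)\,m'_s\,ds.
\end{equation*}
Since $m'$ is continuous and hence bounded on $[0,t]$, the remainder is dominated by $(\sup_{[0,t]}m'_s)\int_0^t\varphi^{\star(N+1)}(s)\,ds \leq (\sup_{[0,t]}m'_s)\,\Lambda^{N+1}$, which tends to $0$ as $N\to\infty$. By monotone convergence (everything is nonnegative), I conclude
\begin{equation*}
m'_t = \mu + \mu\int_0^t \Gamma(s)\,ds, \qquad \Gamma := \sum_{n\geq 1}\varphi^{\star n}.
\end{equation*}

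Finally, Fubini and the identity $\int_0^\infty \varphi^{\star n}(s)\,ds = \Lambda^n$ (also obtained by induction) yield $\int_0^\infty \Gamma(s)\,ds = \sum_{n\geq 1}\Lambda^n = \Lambda/(1-\Lambda)<\infty$. Therefore
\begin{equation*}
\lim_{t\to\infty} m'_t = \mu\Bigl(1+\tfrac{\Lambda}{1-\Lambda}\Bigr) = \tfrac{\mu}{1-\Lambda} = a_0.
\end{equation*}
The asymptotic $m_t \sim a_0 t$ then follows immediately from $m_t = \int_0^t m'_s\,ds$ by a standard Cesàro argument.

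The only delicate point is the bookkeeping in the iteration step: since we only know $\Lambda<1$ globally and not $\int_0^T|\varphi|<1$ on any finite interval, a naive contraction on $C([0,T])$ need not work. However, the nonnegativity of $\varphi$ and $m'$ makes the argument go through via monotone convergence without any difficulty, so this is really bookkeeping rather than a genuine obstacle.
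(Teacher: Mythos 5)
Your proof is correct, but it takes a genuinely different route from the paper's. After the common first step (differentiating to obtain $m'_t=\mu+\intot\varphi(t-s)m'_s\,ds$), you solve this renewal equation explicitly: iterating and killing the remainder with the bound $\sup_{[0,t]}m'\cdot\int_0^\infty\varphi^{\star(N+1)}(s)ds\leq \sup_{[0,t]}m'\cdot\Lambda^{N+1}\to0$ gives the resolvent representation $m'_t=\mu\big(1+\intot\Gamma(s)ds\big)$ with $\Gamma=\sum_{n\geq1}\varphi^{\star n}$, and then $\int_0^\infty\Gamma(s)ds=\Lambda/(1-\Lambda)<\infty$ yields $\lim_{t\to\infty} m'_t=a_0$ by monotone convergence. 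The paper avoids the resolvent entirely and sandwiches instead: $u_t=\sup_{[0,t]}m'_s$ satisfies $u_t\leq\mu+\Lambda u_t$, giving $\limsup_t m'_t\leq a_0$, while $v_t=\inf_{s\geq t}m'_s$ satisfies $v_t\geq\mu+v_{t/2}\int_0^{t/2}\varphi(s)ds$, giving $\liminf_t m'_t\geq a_0$. Your argument buys a little more (an explicit monotone formula for $m'$, exactly the identity $m'=\mu(1+\Upsilon)$ that the paper establishes via $\Gamma$ and Lemma \ref{Glemlim} in the supercritical case, but here without any need for Feller's renewal theorem since $\Gamma$ is integrable when $\Lambda<1$); the paper's argument is shorter and uses nothing beyond the differentiated equation. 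Two minor remarks: the differentiation is cleanest if done from \eqref{eqtostudy} rather than \eqref{eqtostudy2}, since $s\mapsto\int_0^s\varphi(s-u)m'_u\,du$ is continuous and the fundamental theorem of calculus applies directly; and your closing caveat is moot, because $\varphi\geq0$ gives $\int_0^T\varphi(s)ds\leq\Lambda<1$ for every $T$, so even a plain contraction-type bound on $[0,T]$ would have worked.
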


\begin{proof} 
We rather use \eqref{eqtostudy}, which writes $m_t=\mu t + \intot \int_0^s \varphi(s-u)m'_udu ds$.
Differentiating this expression, we find
$m'_t=\mu+\intot \varphi(t-s)m'_s ds$. 
We first introduce $u_t=\sup_{[0,t]} m'_s$. We have $u_t \leq \mu + \Lambda u_t$, whence
$u_t \leq \mu/(1-\Lambda)$ for all $t\geq 0$ and thus  $\limsup_{t\to \infty} m'_t \leq \mu/(1-\Lambda)$.
We next introduce $v_t=\inf_{s\geq t} m'_s$, which is non-decreasing and thus has a limit $\ell\in (0,\infty]$. 
We have $v_t \geq \mu + v_{t/2} \int_0^{t/2} \varphi(s)ds \to \mu+ \Lambda \ell$ as $t\to \infty$.
Consequently $\ell \geq \mu+\Lambda\ell$, whence $\ell\geq  \mu/(1-\Lambda)$ 
and finally $\liminf_{t\to \infty} m'_t \geq \mu/(1-\Lambda)$. All this proves that
$m'_t \sim a_0$ and this implies that $m_t\sim a_0 t$.
\end{proof}

We now turn to the supercritical case.

\begin{lem}\label{Glemlim}
Consider $\mu>0$ and a function 
$\varphi:[0,\infty)\mapsto [0,\infty)$ such that $\Lambda=\int_0^\infty \varphi(s)ds \in(1,\infty]$.
By Lemma \ref{expuni}, \eqref{eqtostudy2} has a unique non-decreasing
locally bounded solution $(m_t)_{t\geq 0}$, which is of class $C^1$.
Assume furthermore that $t\mapsto \intot |d\varphi(s)|$ has at most polynomial growth.
Let $\Gamma(t)=\sum_{n\geq 1} \varphi^{\star n}(t)$ and $\Upsilon(t)=\intot \Gamma(s)ds$.

\vip

(a) There is a unique $\alpha_0>0$ such that $\cL_\varphi(\alpha_0)=1$. The function $\Gamma$
is locally bounded. Setting $a_0=\mu \alpha_0^{-2}(\int_0^\infty t \varphi(t)e^{-\alpha_0 t}dt)^{-1}$, we have,
as $t\to \infty$, 
$$
\Gamma(t) \sim (a_0\alpha_0^2/\mu) e^{\alpha_0 t}, \quad \Upsilon(t) \sim (a_0\alpha_0/\mu) e^{\alpha_0 t}, \quad 
m_t \sim a_0 e^{\alpha_0 t}, \quad m'_t \sim a_0\alpha_0 e^{\alpha_0 t}.
$$

\vip

(b) There are some constants $0<c<C$ such that for all $t\geq 0$, 
$ce^{\alpha_0 t} \leq \Gamma(t)+1 \leq Ce^{\alpha_0 t}$, 
$ce^{\alpha_0 t} \leq \Upsilon(t)+1 \leq Ce^{\alpha_0 t}$, 
$ce^{\alpha_0 t} \leq m_t+1\leq Ce^{\alpha_0 t}$ and $ ce^{\alpha_0 t} \leq m_t'+1\leq Ce^{\alpha_0 t}$.

\vip

(c) We also have
$$
\lim_{t\to \infty }\intot \Big(\frac{\Upsilon(t-s)}{m_t} - \frac{\alpha_0}{\mu}e^{-\alpha_0 s} \Big)^2 m'_s ds = 0.
$$

\vip

(d) Consider a real sequence $(\eta_n)_{n\geq 1}$ such that $\lim_{n\to \infty} \eta_n=0$. Then
we have the property that 
$\lim_{t\to\infty} e^{-\alpha_0 t}\sum_{n\geq 1} \eta_n \intot \varphi^{\star n}(t-s) e^{\alpha_0 s/2}ds= 0$.

\vip

(e) For any pair of locally bounded functions $u,h:(0,\infty)\mapsto \rr$ such that
$u=h+u\star\varphi$, there holds $u=h+h\star \Gamma$.
\end{lem}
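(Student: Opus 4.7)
My plan is to prove the parts in the dependency order (e)~$\to$~(a)~$\to$~(b),~(c),~(d), establishing first that $\Gamma$ is locally bounded (needed to make $h\star\Gamma$ meaningful). The hypothesis that $\int_0^t|d\varphi(s)|$ has at most polynomial growth implies in particular that $\varphi$ is of bounded variation on compacts, hence locally bounded; then the standard iteration gives $\varphi^{\star n}(t)\le\|\varphi\|_{L^\infty([0,T])}^n\,t^{n-1}/(n-1)!$ on $[0,T]$, from which $\Gamma\le \|\varphi\|_{L^\infty([0,T])}\,e^{\|\varphi\|_{L^\infty([0,T])}t}$ on $[0,T]$. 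For (e), set $v:=h+h\star\Gamma$ (locally bounded) and use the telescoping identity $\Gamma\star\varphi=\sum_{n\ge 2}\varphi^{\star n}=\Gamma-\varphi$ to compute $v\star\varphi=h\star\varphi+h\star(\Gamma-\varphi)=v-h$; hence $v=h+v\star\varphi$. Since $u$ satisfies the same equation, $w:=u-v$ is locally bounded and satisfies $|w_t|\le\int_0^t\varphi(t-s)|w_s|\,ds$, and Lemma~\ref{grrrr}-(i) forces $w\equiv 0$, giving $u=v$.

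The heart of the lemma is part (a). Existence and uniqueness of $\alpha_0$ follow from the continuity and strict monotonicity of $\cL_\varphi$ on $(0,\infty)$, with $\cL_\varphi(0^+)=\Lambda>1$ and $\cL_\varphi(+\infty)=0$. The key trick for the sharp asymptotics is the exponential rescaling $\psi(t):=\varphi(t)e^{-\alpha_0 t}$: since $\int\psi=\cL_\varphi(\alpha_0)=1$, $\psi$ is a probability density on $[0,\infty)$, and $g(t):=\Gamma(t)e^{-\alpha_0 t}=\sum_{n\ge 1}\psi^{\star n}(t)$ is the renewal density for i.i.d. inter-arrivals of density $\psi$. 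The polynomial-growth assumption on $\int_0^t|d\varphi|$ makes $\psi$ of bounded total variation on $[0,\infty)$ with exponential decay, in particular directly Riemann integrable, so the Key Renewal Theorem yields $g(t)\to 1/\mathfrak m$ where $\mathfrak m=\int_0^\infty s\varphi(s)e^{-\alpha_0 s}\,ds$. This gives $\Gamma(t)\sim(a_0\alpha_0^2/\mu)e^{\alpha_0 t}$, and integrating, $\Upsilon(t)\sim(a_0\alpha_0/\mu)e^{\alpha_0 t}$. Applying (e) to $m=\mu t+m\star\varphi$ with $h(t)=\mu t$ produces $m_t=\mu t+\mu\int_0^t(t-s)\Gamma(s)\,ds$; differentiating, $m'_t=\mu+\mu\Upsilon(t)\sim a_0\alpha_0 e^{\alpha_0 t}$, and one more integration gives $m_t\sim a_0 e^{\alpha_0 t}$.

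Parts (b), (c), (d) are mostly bookkeeping on top of (a). The two-sided exponential bounds in (b) are immediate from the equivalences in (a) for $t\ge T_0$, together with local finiteness and positivity on $[0,T_0]$. For (c), the integrand converges pointwise to $0$ by (a), and the bounds from (b) yield a dominating function $Ce^{-\alpha_0 s}$, so dominated convergence closes the argument. For (d), set $h(r):=\sum_n\eta_n\varphi^{\star n}(r)$ and observe that $h(r)e^{-\alpha_0 r}=\sum_n\eta_n\psi^{\star n}(r)\to 0$ as $r\to\infty$: given $\varepsilon>0$ pick $N$ with $|\eta_n|<\varepsilon$ for $n\ge N$, then the tail is bounded by $\varepsilon\,g(r)\le C\varepsilon$, and the finite head vanishes because $\psi^{\star n}(r)\to 0$ at infinity for each fixed $n$ (a consequence of $\psi$ being bounded and decaying exponentially, itself inherited from the polynomial BV assumption). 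A change of variables rewrites the target as $e^{-\alpha_0 t/2}\int_0^t h(r)e^{-\alpha_0 r/2}\,dr$; since $h(r)e^{-\alpha_0 r/2}=o(e^{\alpha_0 r/2})$, L'Hospital gives $\int_0^t h(r)e^{-\alpha_0 r/2}\,dr=o(e^{\alpha_0 t/2})$, and multiplying by $e^{-\alpha_0 t/2}$ yields the claim.

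The main obstacle is part (a): carefully invoking the Key Renewal Theorem for the rescaled density $\psi$. This is the single step where the polynomial-growth assumption on $\int_0^t|d\varphi(s)|$ is genuinely used, its role being to guarantee that $\psi$ is directly Riemann integrable (and that $\psi$ itself decays to $0$, which is needed again for the finite-head estimate in (d)). Everything else follows from (a), (b), (e) and routine splitting/dominated-convergence arguments.
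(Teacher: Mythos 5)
Your proposal is correct and follows essentially the same route as the paper: local boundedness of $\Gamma$, the uniqueness argument behind (e), the exponential tilting $\psi(t)=\varphi(t)e^{-\alpha_0 t}$ turning $\Gamma e^{-\alpha_0\cdot}$ into a renewal density whose limit $1/\int_0^\infty t\varphi(t)e^{-\alpha_0 t}dt$ gives (a), then (b) by local boundedness, (c) by dominated convergence with the dominating function $Ce^{-\alpha_0 s}$, and (d) by a head/tail split of the sum. The only (cosmetic) difference is the renewal step: the paper applies Feller's Theorem 4 \cite{f} after checking that $f$, $tf$, $t^2f$ have bounded variation, whereas you invoke the Key Renewal Theorem via direct Riemann integrability of $\psi$ — which is fine, since $\psi$ is a.e.\ continuous (being of bounded variation, thanks to the polynomial growth of $\int_0^t|d\varphi|$) and dominated by the ultimately monotone integrable function $C(1+t)^pe^{-\alpha_0 t}$, and is non-lattice as an absolutely continuous density.
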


\begin{proof} We easily deduce from our assumptions on $\varphi$ that there is some constants $C>0$, $p>0$
such that for all $t\geq 0$, $\varphi(t) \leq C(1+t)^p$ (in particular, $\varphi$ is locally bounded). 
Hence its Laplace transform
is clearly well-defined on $(0,\infty)$, of class $C^\infty$, and
$\lim_{\alpha\to \infty} \cL_\varphi(\alpha)=0$.
Furthermore, $\cL_\varphi(0)=\int_0^\infty \varphi(t)dt\in(1,\infty]$. 
Hence, there indeed exists a unique $\alpha_0>0$ such that $\cL_\varphi(\alpha_0)=1$.
We now divide the proof into several steps.

\vip

{\it Step 1.} We first prove that $\Gamma$ is locally bounded. To this end, 
we introduce $\Gamma_n=\sum_{k=1}^n \varphi^{\star k}(t)$ and observe that $\Gamma_{n+1}=\varphi+\Gamma_n\star\varphi$.
Since $\varphi$ is locally bounded, Lemma \ref{grrrr}-(iii) allows us to conclude that $\sup_n \Gamma_n$ is 
locally bounded, whence the result.

\vip

{\it Step 2.} Here we prove (e). Since $h$ is locally bounded and since $\varphi$ is locally integrable,
we easily deduce from Lemma \ref{grrrr}-(i) that the equation $v=h+v\star\varphi$ (with unknown $v$)
has at most one locally bounded solution. Since both $u$ and $h+h\star \Gamma$ are locally bounded solutions,
the conclusion follows.

\vip

{\it Step 3.} The aim of this step is to verify that 
$\Gamma(t) \sim (a_0\alpha_0^2/\mu) e^{\alpha_0 t}$ as $t\to \infty$.
Observe that $\Gamma$ solves $\Gamma=\varphi + \Gamma \star \varphi$ and introduce $u(t)=\Gamma(t)e^{-\alpha_0t}$
and $f(t)=\varphi(t)e^{-\alpha_0t}$. One easily checks that $u=f+u\star f$. We now apply Theorem 4 of Feller \cite{f}.
We have $\int_0^\infty f(t)dt=1$ by definition of $\alpha_0$. We set $b_1=\int_0^\infty t f(t)dt
= \int_0^\infty t\varphi(t)e^{-\alpha_0 t}dt$ and $b_2=\int_0^\infty t^2 f(t)dt
= \int_0^\infty t^2\varphi(t)e^{-\alpha_0 t}dt$,
which clearly both converge, since $\varphi(t) \leq C(1+t)^p$.
Finally, it is not difficult to check that $f(t)$, $tf(t)$ and $t^2f(t)$ have a bounded total variation on 
$[0,\infty)$ since we have assumed that $t\mapsto \intot|d\varphi(s)|$ has at most polynomial growth. 
Thus Feller \cite[Theorem 4]{f}
tells us that $u(t) \to 1/b_1$ as $t\to \infty$, which gives $\Gamma(t) \sim e^{\alpha_0 t}/b_1$.
This ends the proof, since $1/b_1=a_0\alpha_0^2/\mu$ by definition of $a_0$.

\vip

{\it Step 4.} We now conclude the proof of (a) and (b). 
Recall that $m_t=\mu t + \intot \int_0^s \varphi(s-u)m'_u du ds$,
so that $m'=\mu+ \varphi\star m'$. Applying (e), we deduce that $m'=\mu+\mu\star\Gamma=\mu(1+\Upsilon)$.
By Step 3, we know that $\Gamma(t) \sim (a_0\alpha_0^2/\mu)e^{\alpha_0 t}$ as $t\to \infty$.
This obviously implies that $\Upsilon(t) \sim (a_0\alpha_0/\mu)e^{\alpha_0 t}$, whence
$m'_t \sim a_0\alpha_0 e^{\alpha_0 t}$ and finally $m_t=\intot m'_s ds \sim a_0 e^{\alpha_0 t}$.
Finally, (b) directly follows from (a) and the facts that $\Gamma$, $\Upsilon$, $m$ and $m'$
are nonnegative and locally bounded.

\vip

{\it Step 5.} Point (d) is not very difficult:
using that $\varphi(t)\leq C (1+t)^p$, we see that 
$\varphi^{\star n}(t)\leq C_n(1+t)^{p_n}$ for some constants $C_n>0$ and $p_n>0$.
We next introduce $\e_k=\sup_{n\geq k} |\eta_n|$, which decreases to $0$ as $k\to\infty$,
and write, for any $k\geq 1$,
\begin{align*}
\limsup_{t\to\infty} e^{-\alpha_0 t}\sum_{n\geq1} |\eta_n| \intot \varphi^{\star n}(t-s) e^{\alpha_0 s/2}ds
\leq& \limsup_{t\to\infty} e^{-\alpha_0 t}\sum_{n=1}^k |\eta_n| \intot C_n (t-s)^{p_n} e^{\alpha_0 s/2}ds\\
&+ \e_k \limsup_{t\to\infty} e^{-\alpha_0 t} \sum_{n=k+1}^\infty \intot \varphi^{\star n}(t-s) e^{\alpha_0 s/2}ds.
\end{align*}
The first term on the RHS is of course $0$ (for any fixed $k$). We can bound the second one, using
(b), by
$$
\e_k \limsup_{t\to\infty} e^{-\alpha_0 t} \intot \Gamma(t-s)e^{\alpha_0 s/2}ds \leq  C \e_k 
\limsup_{t\to\infty}e^{-\alpha_0 t} \intot e^{\alpha_0(t-s)}
e^{\alpha_0 s/2}ds \leq C\e_k. 
$$
Letting $k$ tend to infinity concludes the proof.

\vip

{\it Step 6.} It only remains to check point (c). We use the Lebesgue dominated convergence theorem.
Define $h^t_s=(\Upsilon(t-s)/m_t - \alpha_0 e^{-\alpha_0 s}/\mu)^2 m'_s \indiq_{\{s\leq t\}}$.
We have to prove that $\lim_{t\to\infty} \int_0^\infty h^t_s ds =0$. First, it is obvious
from (a) that for $s>0$ fixed, $\lim_{t\to\infty} h^t_s =0$.
Next, we use (b) to write (for $t$ large enough so that $m_t\geq ce^{\alpha_0t}$) 
$h^t_s \leq C (e^{-\alpha_0 s})^2 e^{\alpha_0 s} \leq C e^{-\alpha_0 s}$, which does not depend on $t$ and is integrable
on $(0,\infty)$.
\end{proof}

We next consider briefly a {\it vector} convolution equation.

\begin{lem}\label{vectconv}
Consider a family $\bmu=(\mu_i)_{i\in\zz^d}$ of real numbers such that $0\leq \mu_i\leq C$ for all $i\in\zz^d$,
the stochastic matrix $(A(i,j))_{i,j\in\zz^d}$ defined by \eqref{dfA}, and a locally integrable function
$\varphi: (0,\infty)\mapsto [0,\infty)$.
The equation
$\bm_t=\bmu t + \intot \varphi(t-s) A \bm_s ds$ with unknown $\bm=(m^i_t)_{t\geq 0, i\in\zz^d}$
has a unique solution such that for all $t\geq 0$, $\sum_{i\in\zz^d} 2^{-|i|}\sup_{[0,t]}m^i_s <\infty$.
Furthermore, $m^i$ is of class $C^1$ on $[0,\infty)$ for all $i\in\zz^d$, and it holds that
$\bm_t'=\bmu + \intot \varphi(t-s) A \bm'_s ds$ and 
$\bm_t'= \Big(I + \sum_{n\geq 1} A^n \intot \varphi^{\star n}(s)ds \Big) \bmu$.
Finally, $u_t=\sup_{i\in\zz^d} \sup_{[0,t]}(m^i_s)'$ is finite for all $t\geq 0$
and it holds that $u_t \leq C + \intot \varphi(t-s) u_s ds$.
\end{lem}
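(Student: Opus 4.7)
The plan is to first prove uniqueness by a weighted-sum Gronwall argument, and then construct the solution directly at the level of the derivative $\boldsymbol{v}:=\bm'$ via Picard iteration, exploiting the fact that $A$ is a contraction in the $\ell^\infty$ norm. The explicit formula for $\bm'$ and the integral inequality for $u_t$ then come out of the construction itself, rather than being derived after the fact from $\bm$.

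For uniqueness, take two solutions $\bm,\btm$ in the specified class (both nonnegative in the intended setting), set $\Delta^i_t=\sup_{[0,t]}|m^i_s-\tm^i_s|$ and $\delta_t=\sum_{i\in\zz^d}2^{-|i|}\Delta^i_t$; this $\delta_t$ is locally bounded since $\Delta^i_t\leq m^i_t+\tm^i_t$. From the equation, for every $s'\leq t$,
$$|m^i_{s'}-\tm^i_{s'}|\leq (2d+1)^{-1}\int_0^{s'}\varphi(s'-r)\sum_{j\to i}\Delta^j_r\,dr,$$
and the right-hand side is non-decreasing in $s'$ (as $\Delta^j$ is non-decreasing and $\varphi\geq 0$), so the same bound holds for $\Delta^i_t$. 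Weighting by $2^{-|i|}$, summing over $i\in\zz^d$, and using $\sum_{i:(j,i)\in\cE}2^{-|i|}\leq 2(2d+1)\cdot 2^{-|j|}$ (because $|i-j|\leq 1$ implies $2^{-|i|}\leq 2\cdot 2^{-|j|}$), we obtain $\delta_t\leq 2\intot\varphi(t-s)\delta_s\,ds$. Lemma~\ref{grrrr}-(i) then forces $\delta\equiv 0$.

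For existence, iterate $\boldsymbol{v}^{(0)}=\bmu$ and $\boldsymbol{v}^{(n+1)}_t=\bmu+\intot\varphi(t-s)A\boldsymbol{v}^{(n)}_s\,ds$. Because $A$ is stochastic, $\|A\boldsymbol{w}\|_\infty\leq\|\boldsymbol{w}\|_\infty$, and hence the scalar quantities $U^n_t:=\|\boldsymbol{v}^{(n)}_t\|_\infty$ satisfy $U^{n+1}_t\leq C+\intot\varphi(t-s)U^n_s\,ds$; Lemma~\ref{grrrr}-(iii) gives local boundedness uniformly in $n$. The $\ell^\infty$-increments $W^n_t:=\|\boldsymbol{v}^{(n+1)}_t-\boldsymbol{v}^{(n)}_t\|_\infty$ satisfy $W^{n+1}_t\leq\intot\varphi(t-s)W^n_s\,ds$, so by Lemma~\ref{grrrr}-(ii) the sum $\sum_n W^n_t$ is locally bounded and the iterates converge uniformly on compact time intervals to a continuous limit $\boldsymbol{v}$ solving $\boldsymbol{v}_t=\bmu+\intot\varphi(t-s)A\boldsymbol{v}_s\,ds$. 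Iterating this fixed-point identity $n$ times and letting $n\to\infty$ (the remainder vanishing by the same summability) yields the explicit formula
$$\boldsymbol{v}_t=\Big(I+\sum_{n\geq 1}A^n\intot\varphi^{\star n}(s)\,ds\Big)\bmu,$$
since $\bmu$ is constant in time. Setting $\bm_t:=\intot\boldsymbol{v}_s\,ds$, each $m^i$ is $C^1$ with $(m^i)'=v^i$; a single Fubini exchange shows that both $\intot\varphi(t-s)A\bm_s\,ds$ and $\bm_t-\bmu t$ equal $\intot\Phi(t-r)A\boldsymbol{v}_r\,dr$ with $\Phi(u)=\int_0^u\varphi(s)\,ds$, so $\bm$ solves the original equation; the moment condition holds trivially because in fact $\sup_i\sup_{[0,t]}m^i_s<\infty$.

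The final integral inequality for $u_t=\sup_i\sup_{[0,t]}v^i_s$ is then immediate: from $v^i_{s'}=\mu_i+\int_0^{s'}\varphi(s'-r)(A\boldsymbol{v}_r)_i\,dr\leq C+\int_0^{s'}\varphi(s'-r)u_r\,dr$ (since $(A\boldsymbol{v}_r)_i\leq\|\boldsymbol{v}_r\|_\infty\leq u_r$), and from the fact that the right-hand side is non-decreasing in $s'$ ($u$ non-decreasing and $\varphi\geq 0$), we take sup over $i$ and $s'\leq t$ on the left. The main obstacle this approach circumvents is that $\varphi$ is only locally integrable, which would make termwise differentiation of the convolution equation for $\bm$ delicate; by working directly with $\boldsymbol{v}$ in the $\ell^\infty$ norm, where $A$ is a contraction, the explicit formula, $C^1$ regularity, and bound on $u_t$ all follow cleanly from Picard iteration and the three elementary Gronwall-type results of Lemma~\ref{grrrr}.
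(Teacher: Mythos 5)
Your proof is correct and follows essentially the same route as the paper's: uniqueness via the identical $2^{-|i|}$-weighted Gr\"onwall estimate (Lemma \ref{grrrr}-(i)), and existence by working at the level of $\bm'$ with the sup norm and the stochasticity of $A$ --- indeed your Picard iterates are exactly the partial sums $\sum_{k=0}^n A^k\bmu\intot\varphi^{\star k}(s)ds$ of the explicit series that the paper simply takes as the definition of $\bm'$ and then verifies. The only difference is presentational: the paper posits the series and checks local boundedness and continuity via the $\Upsilon_k$ bound and Lemma \ref{grrrr}-(iii), whereas you recover the same object as a Picard limit; both arguments then integrate back to $\bm$ using Fubini/Lemma \ref{tlt} in the same way.
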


\begin{proof} We proceed in a few steps.

\vip

{\it Step 1.}
We first note that for any vector $\bx=(x_i)_{i\in\zz^d}$, it holds that 
$\sum_{i \in \zz^d} 2^{-|i|}|(A\bx)_i| \leq 2 \sum_{i \in \zz^d} 2^{-|i|}|x_i|$.
This easily follows from the explicit form of $A$.

\vip

{\it Step 2.} We next check uniqueness. Consider two solutions $\bm$ and $\btm$ satisfying the required condition 
and put $h_t=\sum_{i\in\zz^d} 2^{-|i|}\sup_{[0,t]}|m^i_t-\tm^i_t|$. 
We have $h_t \leq \sum_{i\in\zz^d} 2^{-|i|} \intot \varphi(t-s)|(A (\bm_s-\btm_s)_i|ds$.
Using Step 1, we deduce that $h_t \leq 2 \intot \varphi(t-s) h_s ds$
and thus $h_t=0$ by Lemma \ref{grrrr}-(i).

\vip

{\it Step 3.} We define $\bm_t'= \Big(I + \sum_{n\geq 1} A^n \intot \varphi^{\star n}(s)ds \Big) \bmu$.
Using that $A$ is stochastic and that $\bmu$ is bounded (by $C$), we easily deduce that for all $i\in\zz^d$,
$(m^i_t)'\leq C (1+\sum_{n\geq 1} \intot \varphi^{\star n}(s)ds)$. This function is
locally bounded because $\varphi$ is locally integrable: use that 
$\Upsilon_k(t)=\sum_{n= 1}^k \intot \varphi^{\star n}(s)ds$ satisfies $\Upsilon_k(t) \leq \intot \varphi(s)ds
+\intot \varphi(t-s) \Upsilon_k(s)ds$ and use Lemma \ref{grrrr}-(iii), which provides a uniform (in $k$) bound.
Consequently, $u_t=\sup_{i\in\zz^d} \sup_{[0,t]}(m^i_s)'$ is finite for all $t\geq 0$.
Similar arguments show that $(m^i_t)'$ is continuous on $[0,\infty)$,
because $|(m^i_{t+h})'-(m^i_t)| \leq C \sum_{n\geq 1} \int_t^{t+h} \varphi^{\star n}(s)ds$.

\vip

{\it Step 4.} A straightforward consequence of the definition of $\bm_t'$ is that it solves
$\bm_t'=\bmu + \intot \varphi(t-s) A \bm'_s ds$. Using that $A$ is stochastic and that $\bmu$ is bounded,
we immediately deduce that $u_t \leq C + \intot \varphi(t-s) u_s ds$.
We finally define, for each $i\in\zz^d$, $m^i_t=\intot (m^i_s)'ds$.
Integrating the equation satisfied by $\bm'$ and using Lemma \ref{tlt}, we find that
$\bm=(m^i_t)_{i\in\zz^d,t\geq 0}$ is indeed a solution to $\bm_t=\bmu t + \intot \varphi(t-s) A \bm_s ds$.
It only remains to check that $\sum_{i\in\zz^d}2^{-|i|}\sup_{[0,t]} m^i_t <\infty$ for all $t\geq 0$,
but this obviously follows from the facts that $u_t$ is locally bounded and that $\bm_0=0$.
\end{proof}


\begin{thebibliography}{99}

\bibitem{acl}{Y. Ait-Sahalia, J. Cacho-Diaz, R.J.A.Laeven, {\it Modeling financial 
contagion using mutually exciting jump processes,} working paper, 2013.}

\bibitem{bdhm1}{E. Bacry, S. Delattre, M. Hoffmann, J.F. Muzy, {\it Modelling microstructure 
noise with mutually exciting point processes,} Quantitative Finance 13 (2013), 65--77.}

\bibitem{bdhm2}{E. Bacry, S. Delattre, M. Hoffmann, J.F. Muzy, {\it Some limit theorems for Hawkes 
processes and applications to financial statistics},  Stoch. Processes Appl. 123 (2013), 2475--2499.}

\bibitem{bmu}{E. Bacry and J.F. Muzy, {\it Hawkes model for price and trades high-frequency dynamics}, 
arXiv:1301.1135.}

\bibitem{bmu2}{E. Bacry and J.F. Muzy, {\it Second order statistics characterization of Hawkes processes 
and non-parametric estimation}, arXiv:1401.0903.}

\bibitem{bh}{L. Bauwens, N. Hautsch, {\it Modelling financial high frequency data using point processes}, 
ser. In T. Mikosch, J-P. Kreiss, R. A. Davis, and T. G. Andersen, editors, Handbook of Financial Time Series. 
Springer Berlin Heidelberg, 2009.}

\bibitem{bhb}{C. Blundell, K.A. Heller, J.F. Beck, {\it Modelling reciprocating relationships with Hawkes 
processes}, Neural Information Processing Systems (NIPS) 2012.}

\bibitem{bm}{P. Br\'emaud, L. Massouli\'e, {\it Stability of nonlinear Hawkes processes,} Ann. Probab.
24 (1996), 1563--1588.}

\bibitem{c}{J. Chevallier, {\it D\'etection de motifs de d\'ependance entre neurones}, Preprint, 2013.}

\bibitem{dvj}{D.J. Daley, D. Vere-Jones. {\it An introduction to the theory of point processes},
Vol. I. Probability and its Applications (New York). Springer- Verlag, New York, second edition, 2003. 
Elementary theory and methods.}

\bibitem{f}{W. Feller, {\it On the integral equation of renewal theory,} Ann. Math. Statistics 12 (1941), 
243--267.}

\bibitem{fg}{N. Fournier, A. Guillin, {\it On the rate of convergence in Wasserstein distance of the empirical 
measure}, arXiv:1312.2128}.

\bibitem{gda}{S. Gr\"un, M. Diedsmann, A.M. Aertsen, {\it Unitary events analysis}, in {\it Analysis of 
parallel spike trains}, S. Gr\"un and S. Rotter, Spiringer series in computational neurosciences, 2010.}

\bibitem{gl}{A. Galves, E. L\"ocherbach, {\it Infinite systems of interacting chains with memory of variable 
length - a stochastic model for biological neural nets,}. J. of Statist. Physics, to appear.  arXiv:1212.5505 }

\bibitem{h}{A. Hawkes, {\it Spectra of some self-exciting and mutually exciting point processes}, 
Biometrika 58 (1971), 83--90.}

\bibitem{ho}{A. Hawkes, D. Oakes, {\it A cluster process representation of a self-exciting process}, 
J. Appl. Probability 11 (1974), 493--503.}

\bibitem{hrr}{N.R. Hansen, P. Reynaud-Bouret, V. Rivoirard, {\it Lasso and probabilistic inequalities for 
multivariate point processes}, Bernoulli, to appear.  arXiv:1208.0570}

\bibitem{hs}{A. Helmstetter and D. Sornette, {\it Subcritical and supercritical regimes in epidemic 
models of earthquake aftershocks,} Journal of geophysical research, 107 (2002), 2237.}

\bibitem{he}{P. Hewlett {\it Clustering of order arrivals, price impact and trade path optimisation},
In Workshop on Financial Modeling with Jump processes. Ecole Polytechnique, 2006.}

\bibitem{j}{J. Jacod, {\it Calcul stochastique et probl\`emes de martingales}, 
Lecture Notes in Mathematics, 714, Springer, 1979.}

\bibitem{jr}{T. Jaisson and M. Rosenbaum, {\it Limit theorems for nearly unstable Hawkes processes}, 
Ann. App. Probab., to appear. arXiv:1310.2033}

\bibitem{js}{J. Jacod, A.N. Shiryaev, {\it Limit theorems for stochastic processes,} Second edition. 
Springer-Verlag, Berlin, 2003.}

\bibitem{jo}{F. John, {\it Partial differential equations,} Reprint of the fourth edition. 
Applied Mathematical Sciences, Springer-Verlag, 1991.}

\bibitem{k}{Y.Y. Kagan, {\it Statistical distributions of earthquake numers: consequence of branching process}, 
Geophysical Journal International 180 (2010), 1313--1328.}

\bibitem{ll}{G. Lawler, V. Limic, {\it Random walk: a modern introduction,} 
Cambridge Studies in Advanced Mathematics, 123, Cambridge University Press, 2010.}

\bibitem{ls}{P.A.W. Lewis, G.S. Shedler, {\it Simulation of nonhomogeneous Poisson processes by thinning},
Naval Res. Logist. Quart. 26 (1979), 403--413.}

\bibitem{o-1}{Y. Ogata, {\it The asymptotic behaviour of maximum likelihood estimators for stationary 
point processes}, Ann. Instit. Math. Statist. 30 (1978), 243--261.}

\bibitem{o}{Y. Ogata, {\it On Lewis simulation method for point processes,} IEEE Transactions on Information
Theory 27 (1981), 23--30.}

\bibitem{o2}{Y. Ogata, {\it Seismicity analysis through point-process modeling: A review}, Pure and Applied 
Geophysics 155 (1999), 471--507.}

\bibitem{owb}{M. Okatan, M.A. Wilson, E.N. Brown, {\it analyzing functional connectivity using a network 
likelihood model of ensemble neural spiking activity}, Neural Computation 17 (2005), 1927-1961.}

\bibitem{psp}{J.W. Pillow, J. Shlens, L. Paninski, A. Scher, A.M. Litke, E.J. Chichilnisky, E.P. Simoncelli, 
{\it Spatio-temporal correlations and visual signalling in a complete neuronal population}, Nature 454 
(2008), 995--999.}

\bibitem{rs}{P. Reynaud-Bouret and S. Schbath, {\it Adaptive estimation for Hawkes processes: application to 
genome analysis, } Ann. Statist. 38 (2010), 2781--2822.}

\bibitem{rrgt}{P. Reynaud-Bouret, V. Rivoirard, F. Grammont, C. Tuleau-Malot, {\it Goodness-of-fit tests and 
nonparametric adaptive estimation for spike train analysis},  Journal of Math. Neuroscience, to appear.  
hal-00789127}

\bibitem{rrt}{P. Reynaud-Bouret, V. Rivoirard, C. Tuleau-Malot, {\it Inference of functional connectivity in 
Neurosciences via Hawkes processes}, 1st IEEE Global Conference on Signal and Information Processing, 2013.}

\bibitem{s}{A.S. Sznitman, {\it Topics in propagation of chaos,} Ecole d'\'Et\'e de Probabilit\'es 
de Saint-Flour XIX-1989, Vol. 1464 of Lecture Notes in Math. Springer, 1991, 165--251.}

\bibitem{sj}{A. Simma, M.I. Jordan, {\it Modeling events with cascades of Poisson processes}.
http://www.eecs.berkeley.edu/ Pubs/TechRpts/2010/EECS-2010-109.html}

\bibitem{yz}{S.-H. Yang, H. Zha, {\it Mixture of mutually exciting processes for viral diffusion}, 
Proceedings of the 30th International Conf. on Machine Learning (ICML), 2013.}

\bibitem{zzs}{K. Zhou, H. Zha, L. Song, {\it Learning triggering kernels for multi-dimensional Hawkes 
processes}, Proceedings of the 30th International Conf. on Machine Learning (ICML), 2013.}

\bibitem{z1}{L. Zhu, {\it Central limit theorem for nonlinear Hawkes processes}, J. App. Probab. 50 (2013), 
760--771.}

\bibitem{z2}{L. Zhu, {\it Large deviations for Markovian nonlinear Hawkes processes}, Ann. App. Probab., 
to appear. arXiv:1108.2432} 

\bibitem{z3}{L. Zhu, {\it Nonlinear Hawkes Processes,} PhD, New York University, 2013. 
arXiv:1304.7531}

\end{thebibliography}
\end{document}